\documentclass[12pt]{amsart}
\usepackage{amsmath,amssymb}
\usepackage{amsthm}

\hoffset -1.5cm
\voffset -1cm
\textwidth 15.5truecm
\textheight 22.5truecm
\theoremstyle{plain}
\newtheorem{theorem}{Theorem}[section]

\newcounter{tmp}

\newtheorem{proposition}[theorem]{Proposition}
\newtheorem{corollary}[theorem]{Corollary}
\newtheorem{lemma}[theorem]{Lemma}
\theoremstyle{definition}

\makeatletter
\makeatother

\begin{document}
\title{Mityagin's Extension Problem.  Progress Report}

\author{Alexander Goncharov and Zel\.iha Ural}

\subjclass[2010]{ 46E10, 31A15, 41A10}

\date{}
\keywords{Whitney functions, extension problem, Hausdorff measures, Markov's factors.}
\footnote { Alexander Goncharov, Zeliha Ural (Bilkent University, 06800, Ankara, Turkey)\\
e-mail:   goncha@fen.bilkent.edu.tr, zeliha.ural@bilkent.edu.tr}

\maketitle

\begin{abstract} Given a compact set $K\subset {\Bbb R}^d,$ let ${\mathcal E}(K)$ denote
the space of Whitney jets on $K$. The compact set $K$ is said
to have the extension property if there exists a continuous linear
extension operator  $W:{\mathcal E}(K) \longrightarrow
C^{\infty}({\Bbb R}^d)$. In 1961 B. S. Mityagin posed a problem to give
a characterization of the extension property in geometric terms.
We show that there is no such complete description in terms of  densities of Hausdorff
contents or related characteristics. Also the extension property
cannot be characterized in terms of growth of Markov's factors for the set.

\end{abstract}

\section{introduction}

By the celebrated Whitney theorem \cite{wit1}, for each compact set $K\subset {\Bbb R}^d,$
by means of a continuous linear operator one can extend jets of finite order
from  ${\mathcal E}^p(K)$ to functions defined on the whole space,
preserving the order of differentiability.
In the case $p=\infty,$ the possibility of such extension crucially depends on
geometry of the set. Following \cite{tid1}, let us say that $K$  has the {\it extension property (EP)}
if there exists a linear continuous extension operator  $W:{\mathcal E}(K) \longrightarrow
C^{\infty}({\Bbb R}^d)$. For example, any set $K$ with an isolated point does not have $EP$,
 since here each neighborhood of the space ${\mathcal E}(K)$ contains a
linear subspace, but this is not the case for $C^{\infty}({\Bbb R}^d)$.

B. S. Mityagin posed in 1961 (\cite{mit}, p.124) the following problem (in our terms):\\
{\it What is a geometric characterization of the extension property?}

We show that there is no complete characterization of that kind in terms of densities of Hausdorff
contents of sets or analogous functions related to Hausdorff  measures.

This is similar to the state in Potential Theory where R. Nevanlinna \cite{nev}  and H. Ursell \cite{ursel}
proved that there is no complete characterization of polarity of compact sets in terms of
Hausdorff measures. The scale of growth rate of functions $h$, which define the Hausdorff measure $\Lambda_h$,
can be decomposed into three zones. For $h$ from the first zone of small growth, if $0<\Lambda_h(K)$
then the set $K$ is not polar.  For $h$ from the zone of fast growth, if $\Lambda_h(K)<\infty$
then the set $K$ is polar. But between them there is a zone of uncertainty. It is possible to take
two functions with $h_2\prec h_1$ from this zone and the corresponding Cantor-type sets $K_j$ with $0<\Lambda_{h_j}(K_j)<\infty$ for $j\in\{1,2\},$ such that the large (with respect to the Hausdorff measure)
set $K_2$ is polar, whereas the smaller $K_1$ is not polar.

Here we present a similar example of two Cantor-type sets: the smaller set has $EP$ whereas
the larger set does not have it.

Of course, such global characteristics as  Hausdorff  measures or Hausdorff contents cannot be used,
in general, to distinguish $EP,$ which we observe if a compact set is not ``very small" near each its point.
One can suggest for this reason to characterize $EP$ in terms of lower densities of Hausdorff
contents of sets, because, clearly, densities of Hausdorff measures cannot be used for this aim. We analyze
a wide class of dimension functions and show that lower densities of Hausdorff contents do not
distinguish $EP$.

Neither  $EP$ can be characterized in terms of growth rate of Markov's
factors $(M_n(\cdot))_{n=1}^{\infty}$ for sets. Two sets are presented, $K_1$ with $EP$ and
$K_2$ without it, such that $M_n(K_1)$ grows essentially faster than $M_n(K_2)$ as $n\to \infty$.
It should be noted that, by W. Ple\'sniak \cite{ples}, any Markov compact set (with a polynomial growth
rate of $M_n(\cdot)$) has $EP$. All examples are given in terms of the sets $K(\gamma)$ introduced in \cite{G}.\\

Our paper is organized as follows. Section 2 is a short review of main methods of extension.
Also we consider there the Tidten-Vogt linear topological characterization of $EP$.
In Section 3 we give some auxiliary results about the weakly equilibrium Cantor-type set
$K(\gamma).$  In Section 4 we use local Newton interpolations to construct an extension operator $W$.
Sections 5 contains the main result, namely a characterization of $EP$ for  ${\mathcal E}(K(\gamma))$
in terms of a sequence related to $\gamma.$ In sections 6 we compare $W$ with the extension operator from 
\cite{GU}, which is given by individual extensions of elements of Schauder basis for the space  ${\mathcal E}(K(\gamma))$.
In Section 7 we consider two examples that correspond to regular and irregular behaviour of
the sequence $\gamma$. In Section 8 we calculate the Hausdorff $h-$measure of $K(\gamma)$ for
a siutable dimension function $h$ and present  Ursell's type example  for $EP.$ In Section 9 we
consider Hausdorff contents and related characteristics.
In Section 10 we compare the growth of Markov's factors  and $EP$ for $K(\gamma)$.\\

 For the basic facts about the spaces of Whitney functions defined on closed subsets of ${\Bbb R}^d$
 see e.g. \cite{fre}, the concepts of the theory of logarithmic potential can be found in \cite{rans}. Throughout
 the paper, $\log$  denotes the natural logarithm. Given compact set $K$, $Cap(K)$ stands for the
 logarithmic capacity of $K$, $Rob(K)=\log(1/Cap(K))\leq \infty$ is the Robin constant for $K$.
 If $K$ is not polar then $\mu_K$ is its equilibrium measure.
 For each set $A$, let  $\#(A)$ be the cardinality of $A$, $|A|$ be the diameter of $A$.
 Also, $[a]$ is the greatest integer in $a$, $\sum_{k=m}^n(\cdots) =0$ and $\prod_{k=m}^n(\cdots) = 1$ if $m>n.$
 The symbol $\sim$ denotes the strong equivalence: $a_n \sim b_n$ means that $a_n=b_n(1+o(1))$ for $n \to \infty.$

\section{Three methods of extension}

Let $K\subset {\Bbb R}^d$ be a compact set, $\alpha =  (\alpha_j)_{j=1}^d \in {\Bbb Z}_+^d$
be a multi-index.  Let $I$ be a closed cube containing $K$ and
 ${\mathcal F}(K,I)=\{F\in C^{\infty}(I): F^{(\alpha)}\large|_K=0, \,\, \forall \alpha\}$
be the ideal of flat on $K$ functions.  The Whitney space  ${\mathcal E}(K)$ of extendable
jets consists of traces on $K$ of $C^{\infty}$-functions defined on $I$, so it is a factor space
of $ C^{\infty}(I)$ and the restriction operator $R: C^{\infty}(I) \longrightarrow {\mathcal E}(K)$
 is surjective. This means that the sequence
 $ 0 \longrightarrow {\mathcal F}(K,I)  \stackrel{J}{\longrightarrow}
C^{\infty}(I) \stackrel{R}{\longrightarrow }{\mathcal E}(K) \longrightarrow 0 $
is exact. If it splits then the right inverse to $R$ is the desired linear continuous extension
operator $W$ and $K$ has $EP$. We see that there always exists a linear extension operator (for example
one can individually extend the elements of a vector basis in ${\mathcal E}(K)$) and a continuous extension
operator, by Whitney's construction. Numerous examples show that a set $K$ has  $EP$
if $K$  is not ``very small" near each its point, but the exact geometric meaning of ``smallness"
has not been comprehended yet.\\

In \cite{tid1} M. Tidten applied D. Vogt's theory of splitting of short exact sequences of Fr\'{e}chet spaces
(see e.g \cite{f.a}, Chapter 30) and presented the following important linear topological characterization of $EP$:
{\it a compact set $ K $ has the extension property if and only if the space ${\mathcal E}(K)$
has a dominating norm (satisfies the condition (DN)).}

Recall that a  Fr\'{e}chet space $X$ with an increasing system of seminorms
$(\,|| \cdot ||_k)_{k=0}^{\infty}$  has a {\it dominating norm} $||\cdot ||_p$ if
for each $q\in \Bbb N$ there exist $r\in \Bbb N$ and $C\geq 1$ such that
$ ||\, \cdot \,||^2_{q} \,\leq C\, ||\cdot ||_{p} \, ||\cdot ||_{r}.$\\

Concerning the question ``How to construct an operator $W$ if it exists?", we can select three main methods
that can be applied for wide families of compact sets.
\\

The first method goes back to  B. S. Mityagin \cite{mit}:
to extend individually the elements $(e_n)_{n=1}^{\infty}$  of a topological basis of ${\mathcal E}(K)$.
Then for $f= \sum_{n=1}^{\infty} \xi_n \cdot e_n$ take $ W(f)= \sum_{n=1}^{\infty} \xi_n \cdot W(e_n).$
See Theorem 2.4 in \cite{vog1} about possibility of suitable simultaneous extensions of $e_n$ in  the case
when $K$ has nonempty interior. The main problem with this method is that we do not know whether each space
 ${\mathcal E}(K)$ has  a topological basis, even though  ${\mathcal E}(K)$ is complemented in
$C^{\infty}(I)$. This is a particular case of the significant
Mityagin-Pe{\l}czy{\'n}ski problem: Suppose $X$ is a nuclear Fr{\'e}chet space with basis and $E$ is a
complemented subspace of $X$. Does $E$ possess a basis? The space $X=s$ of rapidly decreasing sequences,
which is isomorphic to $C^{\infty}(I)$, presents the most important unsolved case.\\

The second method was suggested in \cite{plp}, where  W. Paw{\l}ucki and W. Ple\'{s}niak constructed
an extension operator $W$ in the form of a telescoping series containing Lagrange interpolation polynomials
with Fekete nodes. The authors considered the family of compact sets with polynomial cusps,
but later, in \cite{ples}, the result was generalized to any Markov sets. In fact (see T.3.3 in \cite{ples}),
for each $C^{\infty}$ determining compact set $K$, the operator $W$ is continuous in the so-called
Jackson topology  $\tau_J$ if and only if $\tau_J$ coincides with the natural topology  $\tau$ of the space
${\mathcal E}(K)$ and this happens if and only if the set $K$ is Markov.
We remark that $\tau_J$ is not stronger then  $\tau$ and that  $\tau_J$ always has the dominating norm
property, see e.g. \cite{AG}. Thus, in the case of non-Markov compact set with $EP$ (\cite{G96}, \cite{AG}),
the Paw{\l}ucki-Ple\'{s}niak extension operator is not continuous in $\tau_J$, but this does not
exclude the possibility for it to be bounded in $\tau$. At least for some non-Markov compact sets,
 the local version of this operator is bounded in  $\tau$ (\cite{AG}).\\

In \cite{FJW} L. Frerick, E. Jord{\'a}, and J. Wengenroth showed that, provided some conditions,
 the classical Whitney extension
operator for the space of jets of finite order can be generalized to the case ${\mathcal E}(K).$
Instead of Taylor’s polynomials in the Whitney construction, the authors used a kind of interpolation
by means of certain local measures. A linear tame extension operator was presented for ${\mathcal E}(K)$,
provided $K$ satisfies a local form of Markov's inequality.\\

There are some other methods to construct $W$ for closed sets, for example Seeley's extension \cite{Se}
from a half space or Stein's extension (\cite{St.}, Ch 6) from sets with the Lipschitz boundary. However
these methods, in order to define $W(f,x)$ at some point $x$, essentially require existence of a
line through $x$ with a ray where $f$ is defined, so these methods cannot be applied for compact sets.\\

Here we consider rather small Cantor-type sets that are neither Markov no local Markov.
We follow \cite{AG} in our construction, so  $W$ is a local version of the Paw{\l}ucki-Ple\'{s}niak operator.
It is interesting that, at least for small sets, $W$ can be considered as an operator 
extending basis elements of the space. Thus, for such sets, the first method and a local
version of the second method coincide.
\section{Notations and auxiliary results}

In what follows we will consider only perfect compact sets $K\subset I=[0,1],$
so the Fr{\'e}chet topology $\tau$ in the space $ {\mathcal E} (K)$
can be given by the norms
$$ \|\,f\,\|_{q} = |f|_{q,K} + sup \left\{ \frac { |(R_{y}^{q} f)^{(k)}(x)|}
{|x-y|^{q-k}} : x,y \in K ,x \neq y, k = 0,1,...q \right\} $$
for $q\in {\Bbb Z}_+, $   where $|f|_{q,K} = sup \{ |f^{(k)} (x)| : x \in K
, k \leq q \} \text{ and } R_{y}^{q}f(x) = f(x) - T_{y}^{q} f(x) $
is the Taylor remainder.

Given $f\in  {\mathcal E} (K),$ let $|||\,f\,|||_{\,q} = inf \,\, |\,F\,|_{\,q,I},$  where the infimum
is taken over all possible extensions of $f$ to $F\in C^{\infty}(I).$ By the Lagrange form of the Taylor
remainder, we have $||\,f\,||_{\,q}\leq 3\,|\,F\,|_{\,q,I}$ for any extension $F$.  
The quotient topology $\tau_Q$, given by the norms $(|||\cdot|||_{\,q=0}^{\infty}),$ is complete and,
by the open mapping theorem, is equivalent to $\tau.$ Hence for any $q$ there exist $r\in {\Bbb N}, \,C>0$
such that
 \begin{equation}\label{r}
 |||\,f\,|||_{\,q} \leq C\,||\,f\,||_{\,r}
\end{equation}
for any $f\in{\mathcal E}(K)$. In general, extensions $F$ that realize $|||\,f\,|||_{\,q}$ for a 
given function $f$, essentially depend on $q$. Of course, the extension property of $K$ means the
existence of a simultaneous extension which is suitable for all norms.

Our main subject is the set $K(\gamma)$ introduced in \cite{G}. For the convenience of the reader we repeat
the relevant material.
Given sequence $\gamma = (\gamma_s)_{s=1}^{\infty}$ with $0<\gamma_s < 1/4,$ let $r_0=1$ and $r_s=\gamma_s r_{s-1}^2$ for $s\in \mathbb{N}$. Define $P_2(x)=x(x-1),\, P_{2^{s+1}}=P_{2^s}(P_{2^s}+r_s)$ and
$ E_s=\{x\in {\mathbb{R}}:\,P_{2^{s+1}} (x)\leq 0\}$ for $s\in \mathbb{N}.$
Then $E_s=\cup_{j=1}^{2^s}I_{j,s},$ where the $s$-th level {\it basic intervals} $I_{j,s}$  are disjoint and
$\max_{1\leq j \leq 2^s} |I_{j,s}| \to 0$ as $s\to \infty.$ Here,
$(P_{2^s}+r_s/2)(E_s)=[-r_s/2,r_s/2],$ so the sets $E_s$ are polynomial inverse images of intervals.
Since $E_{s+1} \subset E_s$, we have a Cantor-type set  $K(\gamma) := \cap_{s=0}^{\infty} E_s.$

In what follows we will consider only $\gamma$ satisfying the assumptions
\begin{equation}\label{ll}
\gamma_k \leq 1/32\,\,\,\,\,\,\mbox{for}\,\,\,\, \,\,\,k\in {\Bbb N} \,\,\,\,\,\,
\mbox{and} \,\,\,\,\sum_{k=1}^{\infty} \gamma_k <\infty.
\end{equation}

The lengths $l_{j,s}$ of the intervals $I_{j,s}$ of the $s-$th level
are not the same, but, provided \eqref{ll}, we can estimate them in terms of the parameter
$\delta_s=\gamma_1\gamma_2 \cdots \gamma_s$  (\cite{G}, L.6):
\begin{equation}\label{delta}
 \delta_s < l_{j,s} < C_0\,\delta_s \,\,\,\,\,\mbox {for}
\,\,\,\,\,\,\, 1\leq j\leq 2^s,
\end{equation}
where $C_0= \exp( 16\,\sum_{k=1}^{\infty} \gamma_k).$ Each $I_{j,s}$ contains two
{\it adjacent} basic subintervals $I_{2j-1,s+1}$ and $I_{2j,s+1}$. Let
$h_{j,s}=l_{j,s}-l_{2j-1,s+1} - l_{2j,s+1}$ be the distance between them.
By Lemma 4 in \cite{G}, $h_{j,s}> (1-4 \gamma_{s+1})l_{j,s}.$ Therefore,
\begin{equation}\label{hh}
h_{j,s}\geq 7/8 \cdot l_{j,s}> 7/8 \cdot \delta_s \,\,\,\mbox{ for all}\,\, j.
\end{equation}

In addition, by T.1 in \cite{G}, the level domains $D_s=\{z\in {\Bbb C}:\, |P_{2^s}(z)+r_s/2|<r_s/2\}$
form a nested family and $K(\gamma)=  \cap_{s=0}^{\infty} \overline D_s.$ The value
$R_s= 2^{-s} \log 2 + \sum_{k=1}^s 2^{-k}\log\frac{1}{\gamma_k}$ represents the Robin constant
of $\overline D_s.$ Therefore, the set $K(\gamma)$ is non-polar if and only if
$Rob(K(\gamma))=  \sum_{n=1}^{\infty} 2^{-n}\log{\frac{1}{\gamma_n}}=
 \sum_{n=1}^{\infty} 2^{-n-1}\log{\frac{1}{\delta_n}} < \infty.$\\

We decompose all zeros of $P_{2^s}$ into $s$ groups. Let
$X_0=\{x_1,x_2\}=\{0,1\}, X_1=\{x_3,x_4\}=\{l_{1,1},1-l_{2,1}\}, \cdots,
X_k= \{l_{1,k},l_{1,k-1}-l_{2,k}, \cdots,
1-l_{2^k,k}\}$ for $k\leq s-1.$ Thus, $X_k=\{x:\, P_{2^k}(x)+r_k=0\}$ contains all
zeros of $P_{2^{k+1}}$ that are not zeros of $P_{2^k}.$
 Set $Y_s=\cup_{k=0}^s X_k.$ Then $P_{2^s}(x)=\prod_{x_k\in Y_{s-1}}(x-x_k).$
Clearly, $\#(X_s)=2^s$ for $s\in \Bbb N$ and $\#(Y_s)=2^{s+1}$ for $s\in \Bbb Z_+.$
We refer $s-${\it th type} points to the elements of $X_s.$

The points from $Y_s$ can be ordered using, as in \cite{CA}, the  {\it rule of increase of the type}.
First we take points from $X_0$ and $X_1$ in the ordering given above. The set $X_2=\{x_5,\cdots,x_8\}$
consists of the points of the second type. We take $x_{j+4}$ as the point which is the closest to $x_j$
for $1\leq j \leq 4.$ Here, $x_5=x_1+l_{1,2},\, x_6=x_2-l_{4,2},$ etc. Similarly,
$X_k=\{x_{2^k+1}, \cdots, x_{2^{k+1}}\}$ can be defined by the previous points.
For $1\leq j\leq 2^k,$ the point $x_j$ is an endpoint of a certain basic interval of  $k$-th level.
Let us take $x_{j+2^k}$ as its another endpoint. Thus, $x_{j+2^k}=x_j \pm l_{i,k},$ where the
sign and $i$ are uniquely defined by $j.$ In the same way, any $N$ points
can be chosen on each basic interval. For example, suppose $2^n\leq N<2^{n+1}$ and the points
$(z_k)_{k=1}^N$ are chosen on $I_{j,s}$ by this rule. Then the set includes all $2^n$ zeros
of $P_{2^{s+n}}$ on $I_{j,s}$ (points of the type $\leq s+n-1$) and some $N-2^n$ points of the type $s+n.$\\

We use two technical lemmas from \cite{GU}. We suppose that $\gamma$ satisfies \eqref{ll}.

Let $2^n\leq N<2^{n+1}$ and $Z=(z_k)_{k=1}^{N+1}$ be chosen on a given $I=I_{j,s}$ by the rule
of increase of the type. Write $Z_N=(z_k)_{k=1}^N$ and $C_1=8/7 \cdot (C_0+1).$
For fixed $x\in \Bbb R$ and finite $A=(a_m),$ let $d_k(x, A):=|x-a_{m_k}| \nearrow.$
\begingroup
\setcounter{tmp}{\value{theorem}}
\setcounter{theorem}{0} 
\renewcommand\thetheorem{\Alph{theorem}}
\begin{lemma}
For each $x\in \Bbb R$ with $\delta=\mathrm{dist}(x, Z_N)\leq \delta_{s+n}$ and $z\in Z$ we have\\
$\delta_{s+n}\,\prod_{k=2}^N d_k(x, Z_N)\leq C_1^N\,\prod_{k=2}^{N+1} d_k(z,Z).$
\end{lemma}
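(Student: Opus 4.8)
The plan is to convert products of distances into values of the nodal polynomials $\omega_N(t)=\prod_{z_k\in Z_N}(t-z_k)$ and $\omega(t)=\prod_{z_k\in Z}(t-z_k)=\omega_N(t)(t-z_{N+1})$, and then to reduce the Lemma to a single scale-invariant inequality between nodes. Since $d_1(z,Z)=0$ for $z\in Z$, the right side equals $C_1^{\,N}|\omega'(z)|$; and if $z_{j_0}\in Z_N$ realises $\delta=\mathrm{dist}(x,Z_N)$, then $\prod_{k=2}^{N}d_k(x,Z_N)=\prod_{z_l\in Z_N\setminus\{z_{j_0}\}}|x-z_l|$, so the assertion reads
$$\delta_{s+n}\!\!\prod_{z_l\in Z_N\setminus\{z_{j_0}\}}\!\!|x-z_l|\ \le\ C_1^{\,N}\,|\omega'(z)|\qquad(z\in Z).$$
The geometry of $K(\gamma)$ enters through \eqref{delta} and \eqref{hh}: among the $N$ points of $Z_N$, the first $2^n$ are the endpoints of the $2^{n-1}$ basic intervals of level $s+n-1$ in $I_{j,s}$, and each of the other $N-2^n$ is a ``new'' endpoint of a level-$(s+n)$ interval, sitting at distance $l_{\cdot,s+n}\in(\delta_{s+n},C_0\delta_{s+n})$ from exactly one earlier point, its \emph{partner}. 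Hence each point of $Z_N$ has at most one partner in $Z_N$; and if two points of $Z_N$ lie in a common basic interval only at some level $m<s+n$, then their distance exceeds $\tfrac78\delta_m$ by \eqref{hh}, while $\delta_{s+n}\le\tfrac1{32}\delta_m$, so the triangle inequality gives $|x-z_l|<C_1|z_{j_0}-z_l|$ for such $z_l$, and $|x-z_l|<(C_0+1)\delta_{s+n}$ for the partner.

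\emph{Step 1: pass to a node.} Replacing each of the $N-1$ factors $|x-z_l|$ by these bounds and, when $z_{j_0}$ has a partner $z_{l_0}$ in $Z_N$, cancelling one power of $\delta_{s+n}$ against the factor $l_{\cdot,s+n}>\delta_{s+n}$ that partner contributes, one obtains
$$\delta_{s+n}\!\!\prod_{z_l\in Z_N\setminus\{z_{j_0}\}}\!\!|x-z_l|\ \le\ C_1^{\,N-1}\,\delta_{s+n}\,|\omega_N'(z_{j_0})| ,$$
and here the definition $C_1=\tfrac87(C_0+1)$ is exactly what makes the constants close, the $\tfrac87$ being charged to \eqref{hh} and the $C_0+1$ to \eqref{delta} together with one unit of $\delta_{s+n}$. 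The Lemma is thus reduced to the \emph{node inequality}
$$\delta_{s+n}\,|\omega_N'(z')|\ \le\ C_1\,|\omega'(z)|\qquad\text{for all }z'\in Z_N,\ z\in Z .$$

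\emph{Step 2: the node inequality, by induction on $N$.} The rule of increase of the type is self-similar: restricted to either child $I_{2j-1,s+1}$ or $I_{2j,s+1}$ of $I_{j,s}$, the induced ordering is again a rule-of-increase-of-type ordering, of $\lceil(N+1)/2\rceil$, resp.\ $\lfloor(N+1)/2\rfloor$, points, so $\omega$ and $\omega_N$ factor over the two children, and the extra node $z_{N+1}$ together with its partner $z_{N+1-2^n}$ fall in one and the same child. By \eqref{delta}--\eqref{hh} every distance between a point of one child and a point of the other lies in $(\tfrac78\delta_s,C_0\delta_s)$, so any two of them agree up to the factor $C_1$; consequently, up to powers of $C_1$ that telescope against those produced by the two children, $\delta_{s+n}|\omega_N'(z')|$ and $|\omega'(z)|$ are controlled by the corresponding quantities on the child containing $z'$, resp.\ $z$, where the inductive hypothesis applies (when $z'$, $z$, and a $\delta_{s+n}$-neighbourhood of the relevant node are not all in the same child, one argues directly from the cross-distance bounds above). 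The base case $N+1=2$, where $Z_N$ is one endpoint of $I_{j,s}$ and $Z$ both, is exactly $\delta_s\le C_1\,l_{j,s}$, true by \eqref{delta}.

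The place where I expect to have to work is the constant: the node inequality is essentially tight — its left side carries $N$ ``units'' ($N-1$ distances and one $\delta_{s+n}$) against the $N$ distances on the right — so no slack is available anywhere, and the induction must be arranged so that precisely one factor $C_1$ is spent per step, the parity of $N+1$ and the choice of which child receives $z_{N+1}$ fixing the exact index shifts (one checks that the hypothesis $\mathrm{dist}(x,\cdot)\le\delta_{(s+1)+(n-1)}=\delta_{s+n}$ is just what survives passage to a child). With the constants in \eqref{delta} and \eqref{hh} kept optimal throughout, the remaining computations are routine.
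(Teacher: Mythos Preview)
The paper does not give a proof of this statement: Lemma~A (and Lemma~B) are quoted from the companion paper \cite{GU} and used as black boxes. So there is no ``paper's own proof'' to compare against; I can only assess your argument on its own merits.

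Your Step~1 (passing from $x$ to the nearest node $z_{j_0}$) is correct. For each $z_l\in Z_N\setminus\{z_{j_0}\}$ the ratio $|x-z_l|/|z_{j_0}-z_l|$ is indeed bounded by $C_0+1$ for the partner and by something close to $1$ (at most $29/28$, using $\gamma_k\le 1/32$ and \eqref{hh}) for the rest, so the product picks up far less than $C_1^{N-1}$.

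Step~2 is where the argument is not complete. You reduce to the \emph{node inequality}
\[
\delta_{s+n}\,|\omega_N'(z')|\ \le\ C_1\,|\omega'(z)|\qquad(z'\in Z_N,\ z\in Z)
\]
with a \emph{single} factor $C_1$, and propose an induction over the two children of $I_{j,s}$. But the inductive step is only sketched: when $z'$ and $z$ lie in different children, or when the extra node $z_{N+1}$ falls in one child rather than the other, the cross-factors $|\omega_N^{(2)}(z')|$ and $|\omega^{(1)}(z)|$ are products of \emph{different numbers} of distances, each between $\tfrac78\delta_s$ and $C_0\delta_s$; comparing them costs a factor $(8C_0/7)^{|\,\cdot\,|}$, and you have not shown that these costs, accumulated over all levels, stay within the one $C_1$ you have budgeted. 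Likewise, the child carries $N_1$ points with its own index $n'$, and one must check that $\delta_{(s+1)+n'}=\delta_{s+n}$ in every parity case so that the inductive hypothesis matches; you allude to this but do not verify it. Your closing remark that ``the remaining computations are routine'' is precisely where the content of the lemma lies: the exponent $N$ in $C_1^{\,N}$ is sharp in order of magnitude, so the bookkeeping of which step consumes which power of $C_1$ cannot be deferred. As written, Step~2 is a plausible outline rather than a proof.
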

\endgroup
\setcounter{theorem}{\thetmp} 

In the next lemma we consider the same $N$ and $Z,$ as above, but now we arrange $z_k$ in increasing order.
For $q=2^m-1$ with $m<n$ and $1\leq j \leq N+1-q,$
let $J=\{z_j, \cdots, z_{j+q}\}$ be $2^m$ consecutive points from $Z$. Given $j,$ we consider all possible
chains of strict embeddings of segments of natural numbers:
$[j,j+q]=[a_0,b_0] \subset [a_1, b_1] \subset \cdots \subset [a_{N-q} ,b_{N-q}]=[1,N+1],$ where
$ a_{k} = a_{k-1},\,b_{k}= b_{k-1}+1$ or $ a_{k} = a_{k-1} -1 ,\,b_{k} = b_{k-1}$ for $1\leq k\leq N-q$.
Every chain generates the product $\prod_{k=1}^{N-q} (z_{b_k}-z_{a_k}).$ For fixed $J,$ let $\Pi(J)$ denote the
minimum of these products for all possible chains.

\begingroup
\setcounter{tmp}{\value{theorem}}
\setcounter{theorem}{0}
\renewcommand\thetheorem{B}
\begin{lemma}
For each $J\subset Z$ there exists $\tilde z\in J$ such that $\prod_{k=q+2}^{N+1} d_k(\tilde z,Z)\leq \Pi(J).$
\end{lemma}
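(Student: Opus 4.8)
The plan is to prove the formally stronger assertion that $\prod_{k=q+2}^{N+1} d_k(\tilde z,Z)\le \Pi(J)$ holds for \emph{every} $\tilde z\in J$, so that a suitable $\tilde z$ certainly exists. The idea is to read off $\Pi(J)$ from one chain that attains the minimum and to compare it with the left-hand side factor by factor, using only the elementary observation that the $m$-th smallest distance from a point $p$ of a finite set to that set does not exceed the diameter of any $m$-element subset containing $p$.

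First I would fix $\tilde z\in J$ together with a chain $[j,j+q]=[a_0,b_0]\subset[a_1,b_1]\subset\cdots\subset[a_{N-q},b_{N-q}]=[1,N+1]$ realizing $\prod_{k=1}^{N-q}(z_{b_k}-z_{a_k})=\Pi(J)$, and set $S_t=\{z_{a_t},z_{a_t+1},\dots,z_{b_t}\}$ for $1\le t\le N-q$. Since each step of the chain enlarges the segment of indices by exactly one, $\#(S_t)=q+1+t$ and $[j,j+q]\subset[a_t,b_t]$, so $\tilde z\in S_t$; and since the $z_k$ are listed in increasing order, $|S_t|=z_{b_t}-z_{a_t}$ and every point of $S_t$ lies within this distance of $\tilde z$. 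Hence at least $q+1+t$ of the $N+1$ numbers $\{\,|\tilde z-z|:z\in Z\,\}$ are $\le z_{b_t}-z_{a_t}$, which gives $d_{q+1+t}(\tilde z,Z)\le z_{b_t}-z_{a_t}$. Multiplying these $N-q$ inequalities and re-indexing by $k=q+1+t$ then yields
\[
\prod_{k=q+2}^{N+1} d_k(\tilde z,Z)=\prod_{t=1}^{N-q} d_{q+1+t}(\tilde z,Z)\le \prod_{t=1}^{N-q}(z_{b_t}-z_{a_t})=\Pi(J).
\]

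I do not expect a serious obstacle: the whole content is the remark that the successive blocks $S_t$ of a minimizing chain form a nested family of subsets of $Z$, each containing $\tilde z$, whose cardinalities $q+1+t$ are precisely the indices occurring in $\prod_{k=q+2}^{N+1}$, so that the nearest-neighbour/diameter bound can be applied term by term; the only thing to watch is the cardinality bookkeeping (after $t$ steps the block has $q+1+t$ points). The same estimate in fact holds with any chain in place of the minimizing one, so $\prod_{k=q+2}^{N+1} d_k(\tilde z,Z)$ is dominated by the product attached to every chain, hence by their minimum $\Pi(J)$; and nothing about $K(\gamma)$ or the rule of increase of the type enters — the statement is valid for an arbitrary finite set $Z\subset{\Bbb R}$ arranged in increasing order and an arbitrary block $J$ of consecutive points of it.
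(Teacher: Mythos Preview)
Your argument is correct. The paper itself does not supply a proof of this lemma; it is quoted from \cite{GU} as one of ``two technical lemmas'' and stated without proof. So there is no in-paper proof to compare against. Your reasoning --- that for any $\tilde z\in J$ and any chain, the block $S_t=\{z_{a_t},\dots,z_{b_t}\}$ contains $\tilde z$ and has $q+1+t$ points all within $z_{b_t}-z_{a_t}$ of $\tilde z$, hence $d_{q+1+t}(\tilde z,Z)\le z_{b_t}-z_{a_t}$, and multiplying over $t$ gives the bound --- is sound, and your bookkeeping of cardinalities and indices is right.

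Two remarks worth recording. First, you actually prove the stronger fact that \emph{every} $\tilde z\in J$ works, not merely some; the paper's phrasing ``there exists $\tilde z\in J$'' may reflect either a different argument in \cite{GU} or simply that only existence is needed downstream (in the proof of Theorem~5.3 a single $\tilde z$ coming from Lemma~B is combined with Lemma~A). Second, as you observe, nothing about $K(\gamma)$ or the rule of increase of the type is used: the inequality is a purely combinatorial fact about a finite set of reals listed in increasing order and a block of $q+1$ consecutive points, and your proof makes this generality explicit.
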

\endgroup
\setcounter{theorem}{\thetmp}
We will characterize $EP$ of $K(\gamma)$ in terms of the values  $B_k=2^{-k-1} \cdot \log \frac{1}{\delta_k}$
that have Potential Theory meaning: $Rob(K(\gamma))=  \sum_{k=1}^{\infty} B_k.$ The main condition is
(compare with (3) in \cite{ep}):
\begin{equation}\label{Y}
\frac{B_{n+s}}{\sum^{n+s}_{k=s} B_k} \rightrightarrows 0 \,\,\,\,\mbox {as}\,\,\,\,  n \rightarrow \infty \,\,\,
\,\,\mbox {uniformly with respect to}\,\,\,\, s.
\end{equation}

We see that this condition allows polar sets.\\

{\bf Example 1}. Let $\gamma_1=\exp(-4 B)$ and $\gamma_k=\exp(-2^k B)$ for $k\geq 2,$ where
$B\geq \frac{1}{4}\log 32,$ so \eqref{ll} is valid. Here, $B_k=B$ for all $k$.
Hence \eqref{Y} is satisfied and the set $K(\gamma)$ is polar.\\

The condition  \eqref{Y} means that
\begin{equation}\label{Y2}
\forall \varepsilon \, \exists s_0,\, \exists n_0:\,\,B_{s+n}< \varepsilon (B_{s}+\cdots +B_{s+n})\,\,\,
\mbox{for}\,\, \,n\geq n_0,\,s\geq s_0.
\end{equation}
Clearly, instead of $\exists s_0$ one can take above $\forall s_0.$
Let us show that  \eqref{Y2} is equivalent to
\begin{equation}\label{Y3}
\forall \varepsilon_1 \,\, \forall m\in \Bbb Z_+ \exists N: \,B_{s+n-m}+\cdots +B_{s+n}
< \varepsilon_1 (B_{s}+\cdots +B_{s+n-m-1}), \,n\geq N, s\geq 1.
\end{equation}
Indeed, the value $m=0$ in \eqref{Y3} gives \eqref{Y2} at once. For the converse,
remark that in \eqref{Y3} we can take on the right side $\varepsilon_1 (B_{s}+\cdots +B_{s+n}),$
so here we consider  \eqref{Y3} in this new form. Suppose \eqref{Y2} is valid.
Given $ \varepsilon_1$ and $m,$ take $\varepsilon=\varepsilon_1/(m+1)$ and the corresponding value $n_0$
from  \eqref{Y2}. Take $N=n_0+m.$ Then for $n\geq N$ and $0\leq k \leq m$ we have $n-k\geq n_0,$ so
$B_{s+n-k}< \varepsilon (B_{s}+\cdots +B_{s+n-k})< \varepsilon (B_{s}+\cdots +B_{s+n}).$
Summing these inequalities, we obtain a new form of \eqref{Y3}.

It follows that the negation of the main condition can be written as
\begin{equation}\label{Not}
\exists \varepsilon \,\, \exists m:  \,  \forall N \,\exists n > N: \,\,\sum_{s+n-m}^{s+n}B_k
> \varepsilon \sum_{s}^{s+n-m-1}B_k\,\, \mbox{for}\,\, s=s_j \uparrow \infty.
\end{equation}

Also, \eqref{Y2} is  equivalent to
\begin{equation}\label{Y4}
\forall \varepsilon\,\, \, \exists m, n_0, s_0: \, B_{s+n}
<  \varepsilon( B_{s+n-m} +\cdots +B_{s+n-1}) \,\,\, \mbox{for}\,\,n\geq n_0,\,s\geq s_0.
\end{equation}
Indeed, comparison of right sides of inequalities shows that \eqref{Y4} implies \eqref{Y2}.
Conversely, given $ \varepsilon,$ take $n_0$ such that \eqref{Y2} is valid with
$\varepsilon/(1+\varepsilon)$ instead of $\varepsilon.$ Take $m=n_0.$ Then for $n\geq n_0, s\geq s_0$
we have $\tilde s=s+n-m\geq s_0$ and, by \eqref{Y2},
$B_{s+n}=B_{\tilde s+m}<\frac{\varepsilon}{1+\varepsilon}(B_{\tilde s}+\cdots B_{\tilde s+m}),$
which is \eqref{Y4}.

We will use a ``geometric" version of \eqref{Y4} in terms of $(\delta_k)$
\begin{equation}\label{G}
\forall M\,\, \, \exists m, n_0, s_0: \,\,
\delta_{s+n-1}\,\delta_{s+n-2}^2 \cdots \delta_{s+n-m}^{2^{m-1}}<\delta_{s+n}^M \,\,\,
\mbox{for}\,\,n\geq n_0,\,s\geq s_0.
\end{equation}

\section{Extension operator for  ${\mathcal E}(K(\gamma))$ }

Here, as in \cite{AG}, we use the method of local Newton interpolations. Let $K$ be shorthand for $K(\gamma)$.
We fix a nondecreasing sequence of natural numbers $(n_s)_{s=0}^{\infty}$ with $ n_s\geq 2$ and
$n_s \to\infty.$ Given function $f$ on $K,$ we interpolate $f$ at $2^{n_0}$ points that are chosen by the rule
of increase of the type on the whole set. A half of points are located on $K\cap I_{1,1}.$ We continue interpolation
on this set up to the degree $2^{n_1}.$ Separately we do the same on $K\cap I_{2,1}.$
Continuing in this fashion, we interpolate $f$ with higher and higher degrees on smaller and smaller
basic intervals. At each step the additional points are chosen by the rule of increase of the type.
Interpolation on $I_{j,s}$ does not affect other intervals of the same level due to the following function.

Let $t>0$ and a compact set $E$ on the line be given. Then $ u(\cdot,t, E)$ is a $C^{\infty}-$
function with the properties: $u(\cdot,t, E) \equiv 1 $ on $E$, \,\,$u(x,t, E) = 0$
for $ dist(x,E) > t$ and
$\sup_{x\in K} |u^{(p)}_{x^p}(x,t,K)| \leq  c_p \,\,t^{-p},$  where the constant $c_p$ depends
only on $p$. Let $c_p\nearrow.$

Given $N+1$ points $(z_k)_{k=1}^{N+1}$ on $K\cap I_{j,s}$  let $L_N(f,x,I_{j,s}) = \sum _{k=1}^{N+1} f(z_k)\,\omega_k(x)$, where $\omega_k(x) =\frac{\Omega_{N+1} (x)}{(x-z_k)\Omega_{N+1}'(z_k)}$ with
$\Omega_{N+1} (x) = \prod_{k=1}^{N+1}(x-z_k)$.

 Let $ N_s= 2^{n_s}-1$ and $ M_s= 2^{n_{s-1}-1}-1$ for $s\geq 1, M_0 =1$. Then, for fixed $s,$
 we take $M_s+1 \leq N \leq N_s,$ so $2^{n}\leq N < 2^{n+1}$ with $n\in\{n_{s-1}-1, \cdots, n_s-1\}.$
For such $N$ and $s$ we take $t_N:=\delta_{s+n}.$  Let, in addition, $1\leq j\leq 2^s$  be fixed.
Then we choose $N+1$ points on the interval $I_{j,s}$ by the rule of increase of the type and consider
for given $f$
$$ A_{N,j,s}:= [L_N(f,x,I_{j,s})-L_{N-1}(f,x,I_{j,s})]\,u(x, t_N,I_{j,s}\cap K).$$

 We call $ A_{j,s}(f,x) :=\sum_{N=M_s+1}^{N_s} A_{N,j,s} $ the  {\it accumulation sum}.
 The last term here corresponds to the interpolation on $I_{j,s}$ at  $2^{n_s}$ points.
 In order to continue interpolation on subintervals of $I_{j,s},$ let us consider
 the  {\it transition sum}
 $$ T_{k,s}(f,x) :=  [L_{M_{s+1}}(f,x,I_{k,s+1})-L_{N_s}(f,x,I_{j, \,s})]
 \,u(x,\delta_{s+n_s-1},I_{k,\,s+1}\cap K),$$
 where we suppose $1\leq k\leq 2^{s+1},\, j=[\frac{k+1}{2}]$ and $ I_{j,\,s} \supset I_{k,s+1} \cup I_{i,s+1}.$

As above, we represent the difference in brackets in the telescoping form:
$$[L_{M_{s+1}}-L_{N_s}]= - \sum_{N=2^{n_s -1}}^{2^{n_s} -1} [L_N(f,x,I_{j, \,s}) -L_{N-1}(f,x,I_{j, \,s})].$$
Here, the interpolating set $Z$ for $L_N$ consists of $M_{s+1}+1$ points
of $Y_{s+n_s-1}\cap I_{k,s+1}$ and $N-M_{s+1}$ points, chosen by the rule of increase of the
type on $I_{i,s+1}.$ The second parameter of $u$ is smaller than the mesh size of $Z$,
so $T_{k,s}(f,x)\ne 0$ only near $I_{k,s+1}.$

Consider a linear operator
$$ W(f,\cdot) = L_{M_0}(f,\cdot,I_{1,\,0})\,u(\cdot, 1, K)+
\sum _{s=0}^{\infty} \big [ \sum_{j=1}^{2^s}  A_{j,s}(f,\cdot) +
\sum_{k=1}^{2^{s+1}}  T_{k,s}(f,\cdot)\big ].$$
We remark at the outset that, for fixed $x\in  \Bbb R$ and $s,$ because of the choice of parameters for
the function $u,$  at most one value $A_{j,s}$ does not vanish. The same is valid for $ T_{k,s}.$

Let us show that $W$ extends functions from ${\mathcal E}(K),$ provided a suitable choice of $(n_s)_{s=0}^{\infty}.$
Define $n_0=n_1=2$ and $n_s=[ \log_2 \log \frac{1}{\delta_s}]$ for $s\geq 2.$ Then $n_s\leq n_{s+1}$ and
\begin{equation}\label{ns}
\frac{1}{2} \log \frac{1}{\delta_s}< 2^{n_s} \leq \log \frac{1}{\delta_s}\,\,\, \mbox {for}\,\,s\geq 2.
\end{equation}

\begin{lemma}\label{ext}
Let $(n_s)_{s=0}^{\infty}$ be given as above. Then for any $\,f\in {\mathcal{E}}(K(\gamma))\,$  and
$\, x \in K(\gamma)\,$ we have $ \,\,W(f,x) = f(x).$
\end{lemma}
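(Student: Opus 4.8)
The plan is to fix $f\in\mathcal{E}(K(\gamma))$ and $x\in K(\gamma)$ and show that all but finitely many summands in $W(f,\cdot)$ vanish at $x$, while the surviving ones telescope to $f(x)$. First I would observe that, since $x\in K$, the point $x$ lies in exactly one basic interval $I_{j(s),s}$ at every level $s$, and these are nested: $I_{j(0),0}\supset I_{j(1),1}\supset\cdots$. For a fixed level $s$, the remark already made in the text says at most one accumulation summand $A_{j,s}(f,\cdot)$ and at most one transition summand $T_{k,s}(f,\cdot)$ can be nonzero at $x$; because $u(\cdot,t,E)\equiv 1$ on $E$ and $x\in I_{j(s),s}\cap K$, the cutoff factors in the relevant $A_{j(s),s}$ and $T_{k(s+1),s}$ all equal $1$ at $x$. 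So evaluating at $x$ kills every cutoff and leaves a pure interpolation expression.

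Next I would write out the telescoping structure explicitly. At level $s$ the surviving accumulation sum is $\sum_{N=M_s+1}^{N_s}\bigl[L_N(f,x,I_{j(s),s})-L_{N-1}(f,x,I_{j(s),s})\bigr] = L_{N_s}(f,x,I_{j(s),s})-L_{M_s}(f,x,I_{j(s),s})$, and the surviving transition sum (using the telescoping form given in the text with a minus sign) contributes $L_{M_{s+1}}(f,x,I_{j(s+1),s+1})-L_{N_s}(f,x,I_{j(s),s})$. Adding the $s$-th accumulation and transition contributions gives $L_{M_{s+1}}(f,x,I_{j(s+1),s+1})-L_{M_s}(f,x,I_{j(s),s})$, and the leading term $L_{M_0}(f,x,I_{1,0})u(x,1,K)=L_{M_0}(f,x,I_{1,0})$ provides the base. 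Hence the partial sum through level $s$ collapses to $L_{M_{s+1}}(f,x,I_{j(s+1),s+1})$. The one point needing care here is the matching of interpolation sets: the transition sum's telescoping pieces use an interpolating set $Z$ whose $M_{s+1}+1$ points lie in $Y_{s+n_s-1}\cap I_{k,s+1}$, and I must check that these are precisely the $M_{s+1}+1$ points that the rule of increase of the type selects on $I_{j(s+1),s+1}$ to start the next accumulation sum $L_{M_{s+1}}(f,\cdot,I_{j(s+1),s+1})$ — this is a bookkeeping consequence of the "rule of increase of the type" and the identity $M_{s+1}=2^{n_s-1}-1$, so that $L_{M_{s+1}}(f,x,I_{j(s),s})$ and $L_{M_{s+1}}(f,x,I_{j(s+1),s+1})$ are the same polynomial evaluated at $x$.

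It then remains to pass to the limit: $W(f,x)=\lim_{s\to\infty}L_{M_{s+1}}(f,x,I_{j(s+1),s+1})$. Since $x\in I_{j(s+1),s+1}\cap K$ and this interval has diameter at most $C_0\delta_{s+1}\to 0$ by \eqref{delta}, while all interpolation nodes also lie in $I_{j(s+1),s+1}\cap K$, the Newton/Lagrange interpolant of the continuous function $f$ on a shrinking set converges to $f(x)$; concretely one bounds $|L_{M_{s+1}}(f,x,I_{j(s+1),s+1})-f(x)|$ by $(M_{s+1}+1)\max_k|f(z_k)-f(x)|\le (M_{s+1}+1)\,\omega_f(C_0\delta_{s+1})$ using $\sum_k|\omega_k(x)|\le$ a bound that one gets from $x$ and all nodes lying in the same tiny interval — more simply, since $f$ extends to a $C^\infty$ function $F$ on $I$, one has $|L_{M_{s+1}}(f,x,\cdot)-f(x)|\le |F|_{1,I}\cdot|I_{j(s+1),s+1}|\cdot\sum_k|\omega_k(x)|$, and the $\sum_k|\omega_k(x)|$ factor is controlled because $x$ lies between consecutive nodes.

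The main obstacle I anticipate is precisely the node-matching/indexing verification in the second paragraph: one must be sure that the finitely many points inherited from the parent interval $I_{j(s),s}$ when restricting to the child $I_{j(s+1),s+1}$ are exactly the initial segment (in the rule-of-increase-of-the-type ordering) of the nodes used for interpolation on the child, so that the two Lagrange polynomials genuinely agree as functions and the telescoping is exact rather than merely approximate. Once that combinatorial fact is pinned down, the vanishing of cutoffs and the convergence of interpolants on shrinking sets are routine, and the only place where the specific choice $n_s=[\log_2\log(1/\delta_s)]$ (equivalently \eqref{ns}) enters is in guaranteeing $t_N\le$ mesh sizes so that the "at most one nonzero summand" remark holds — but for the pointwise identity $W(f,x)=f(x)$ on $K$ we do not even need the quantitative estimates, only that $n_s\to\infty$ and the cutoffs behave correctly.
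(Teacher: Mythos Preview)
Your telescoping reduction to $W(f,x)=\lim_{s\to\infty}L_{M_{s+1}}(f,x,I_{j(s+1),s+1})$ is correct and in fact spelled out more carefully than in the paper, which just writes ``by the telescoping effect'' to obtain \eqref{tel}. The node-matching concern you raise is legitimate bookkeeping, but it is handled by construction and is not the real obstacle.

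The genuine gap is in your convergence step. You assert that $\sum_k|\omega_k(x)|$ is ``controlled because $x$ lies between consecutive nodes''; this is false for these highly non-uniform interpolation nodes. Running the same distance estimates as in the paper (compare the numerator and denominator bounds for $|\omega_k(x)|$ using \eqref{delta} and \eqref{hh}) shows that the Lebesgue function satisfies only $\sum_k|\omega_k(x)|\lesssim (8C_0/7)^{2^{n_s-1}}\cdot\delta_{s+n_s}\sum_k|x-z_k|^{-1}$, and the factor $(8C_0/7)^{2^{n_s-1}}$ is already of size $\delta_s^{-c}$ with $c=\tfrac12\log(8C_0/7)$. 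Neither your continuity bound $\omega_f(C_0\delta_{s+1})$ nor your $C^1$ bound $|F|_{1,I}\,C_0\delta_{s+1}$ can absorb this: for $C_0$ large (which is allowed, since $C_0=\exp(16\sum\gamma_k)$) one has $c>1$ and the product diverges. Your final remark that ``only $n_s\to\infty$'' is needed is therefore incorrect as well; if $n_s$ grows faster than prescribed by \eqref{ns}, the Lebesgue constants blow up faster than any fixed power of $\delta_s$ can compensate.

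The paper repairs this by fixing $q>2+\log(8C_0/7)$ and invoking the Whitney-norm estimate \eqref{LM}, which introduces the extra factor $|x-z_k|^q$ into each summand. This converts the dangerous term into $2^n(8C_0/7)^{2^n}\delta_s^{\,q-1}\cdot\delta_{s+n}$, and now the choice of $n_s$ in \eqref{ns} gives $(8C_0/7)^{2^n}\le\delta_s^{-\log(8C_0/7)}$, so that $\delta_s^{\,q-1}$ wins and the error tends to zero. In short, higher-order smoothness of $f$ and the calibrated growth of $n_s$ are both essential for the pointwise identity, not just for later continuity estimates.
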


\begin{proof}
Let us fix a natural number $q$ with $q > 2+\log(8C_0/7),$ where $C_0$ is defined in \eqref{delta}.
By the telescoping effect,
\begin{equation}\label{tel}
W(f,x) = \lim _{s \to \infty} L_{M_{s}}(f,x,I_{j,s}),
 \end{equation}
where $j =j(s,x)$ is chosen in a such way that $ x \in I_{j,s}.$ As in [EvI],
\begin{equation}\label{LM}
 |\,L_{ M_{s} }(f,x,I_{j,s}) - f(x)| \leq ||\,f\,||_{\,q}
  \,\sum_{k=1}^{2^n}\,|\,x - z_k |^{\,q} \,|\,\omega_k(x)\,|.
  \end{equation}
Here $n$ is shorthand for $n_{s-1}-1$ and $s$ is such that $ M_{s}=2^n-1>q.$ The interpolating set
$(z_k)_{k=1}^{2^n}$ for $L_{ M_{s}}$ consists of all points of the type $\leq s+n-1$ on $I_{j,s}.$
Given point $x,$ we consider the chain of basic intervals containing it:
$x\in I_{j_{n},s+n}\subset  \cdots \subset I_{j_1,s+1} \subset I_{j,s}.$
We see that $I_{j_{n},s+n}$ contains one interpolating point, $I_{j_{n-1},s+n-1}\setminus I_{j_{n},s+n}$
does one more $z_i,$ $I_{j_{n-2},s+n-2}\setminus I_{j_{n-1},s+n-1}$
  contains two such points, etc. Thus, for fixed $k,$ we get

$$ |\,x - z_k |^{\,q}\prod_{i=1, i\ne k}^{2^n}|x-z_i| \leq l_{j,s}^{\,q-1}
 \cdot l_{j_{n},s+n} \cdot l_{j_{n-1},s+n-1}\cdot l_{j_{n-2},s+n-2}^2 \cdots l_{j,s}^{\,2^{\,n-1}}.$$

 By \eqref{delta}, this does not exceed
 $C_0^{2^n+q-1}\delta_{s+n}\,\delta_{s+n-1}\,\delta_{s+n-2}^2 \cdots \delta_{s+1}^{2^{n-2}}\,\delta_{s}^{2^{n-1}+q-1}.$

On the other hand, by a similar argument, for the denominator of $|\,\omega_k(x)\,|$ we have

$$ |\,z_k-z_1|\cdots |\,z_k-z_{k-1}|\,\cdot|\,z_k-z_{k+1}|\cdots|\,z_k-z_{2^n}| \geq
l_{q_{n-1},s+n-1}\cdot h_{q_{n-2},s+n-2}^2 \cdots h_{j,s}^{\,2^{\,n-1}}$$
for some indices $q_{n-1}, q_{n-2}, \cdots.$ The last product exceeds
$(7/8)^{2^n-2} \delta_{s+n-1}\,\delta_{s+n-2}^2 \cdots \delta_{s}^{2^{n-1}},$ by \eqref{hh}.
 It follows that
 $$ \mbox{LHS of }\eqref{LM}\leq
 ||\,f\,||_{\,q}\, 2^n \,C_0^{q-1} (8C_0/7)^{\,2^{\,n}}\,\delta_{s+n}\, \delta_{s}^{q-1} .$$
The expression on the right side approaches zero as $s\to \infty.$ Indeed, $2^n < \log(1/\delta_{s-1}),$
by \eqref{ns}, and $ 2^n (8C_0/7)^{\,2^{\,n}}\, \delta_{s}^{q-1}<1$ due to the choice of $q$.
Thus the limit in \eqref{tel} exists and equals $f(x).$
\end{proof}

\section{Extension property of weakly equilibrium Cantor-type sets}

We need two more lemmas.

\begin{lemma} \label{f} Let $\gamma$  satisfy \eqref{ll}, $q=2^m,\, r=2^n$ with $m<n$
and $Z=(z_k)_{k=1}^r$ be all points of the type $\leq s+n-1$
on $I_{1,s}$ for some $s\in \Bbb Z_+.$ Let $f(x)=\prod^r_{k=1}(x-z_k)$ for $x \in K(\gamma) \cap I_{1,s}$ and $f=0$
on $K(\gamma) \setminus I_{1,s}$. Then $|f|_{0,K(\gamma)}\leq C_0^r \cdot \delta_{n+s} \cdot \delta_{n+s-1} \cdot \delta_{n+s-2}^2 \cdots \delta_{s}^{2^{n-1}}, \,\,|f^{(q)}(0)| \geq q! \cdot (7/8)^{r-q} \cdot  \delta_{n+s-m-1}^{2^m} \cdots \delta_{s}^{2^{n-1}}$ and $|| f ||_r \leq 2 \cdot r!.$
\end{lemma}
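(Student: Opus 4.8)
The plan is to establish the three estimates separately, in each case reducing everything to the length bounds \eqref{delta} and \eqref{hh} combined with the combinatorial structure of how the points of $Z$ distribute among the basic intervals of levels $s,s+1,\dots,s+n-1$. The key bookkeeping fact, already used in the proof of Lemma~\ref{ext}, is that among the $r=2^n$ points of type $\le s+n-1$ lying on $I_{1,s}$, exactly $2^{n-1}$ have type $s+n-1$, exactly $2^{n-2}$ have type $s+n-2$, and so on down to the two points of type $s$ (the endpoints of $I_{1,s}$); equivalently, in the chain $I_{1,s}\supset I_{j_1,s+1}\supset\cdots$ each successive difference set picks up a controlled number of nodes.

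First I would prove the bound on $|f|_{0,K(\gamma)}$. Since $f$ vanishes off $I_{1,s}$, it suffices to bound $\prod_{k=1}^r|x-z_k|$ for $x\in K(\gamma)\cap I_{1,s}$. Fix such an $x$ and the chain of basic intervals $x\in I_{j_n,s+n}\subset\cdots\subset I_{j_1,s+1}\subset I_{1,s}$. Each factor $|x-z_k|$ is at most the length of the smallest basic interval containing both $x$ and $z_k$; counting how many $z_k$ lie in each annular difference $I_{j_{i-1},s+i-1}\setminus I_{j_i,s+i}$ exactly as in Lemma~\ref{ext}, one gets $\prod_{k=1}^r|x-z_k|\le l_{j_n,s+n}\cdot l_{j_{n-1},s+n-1}\cdot l_{j_{n-2},s+n-2}^{2}\cdots l_{1,s}^{2^{n-1}}$, and then \eqref{delta} upgrades each $l$ to the corresponding $\delta$ at the cost of a factor $C_0$ per node, i.e. $C_0^{r}$ in total. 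This yields the stated bound on $|f|_{0,K(\gamma)}$.

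Next, for $|f^{(q)}(0)|$ with $q=2^m$: since $0=x_1\in Z$ is one of the nodes, $f^{(q)}(0)=q!\cdot e_{r-q}\big(\{-z_k: z_k\ne 0\}\big)$ up to sign, but more usefully I would note that the $q$-th derivative at a node $z_j$ of $\prod(x-z_k)$ need not single out one term unless $q$ equals the multiplicity; instead I would use that $0$ is a simple zero and write $f(x)=x\cdot g(x)$, differentiate, and bound below by the product of distances from $0$ to the $r-q$ \emph{farthest} nodes — these are nodes of type $\ge s+n-m$, and by \eqref{hh} the gap $h_{j,s}$ between adjacent subintervals, hence the distance between $0$ and any node in a non-adjacent branch, is at least $\tfrac78$ times the relevant length, which is at least $\tfrac78\delta$ of that level. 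Collecting the $2^m$ nodes of type $s+n-m-1$ (contributing $\delta_{s+n-m-1}^{2^m}$ after losing the $7/8$ factors), the $2^{m+1}$ of type $s+n-m$, and so on up through type $s+n-1$, and keeping the exact factorial $q!$ from the node at $0$, gives the asserted lower bound $q!\,(7/8)^{r-q}\,\delta_{n+s-m-1}^{2^m}\cdots\delta_s^{2^{n-1}}$.

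Finally, for $\|f\|_r\le 2\cdot r!$: here $\deg f=r$, so $f^{(r)}\equiv r!$ on $I_{1,s}$ (and $0$ elsewhere on $K$), while $f^{(k)}$ for $k<r$ and all Taylor remainders $R_y^r f$ vanish identically once $y$ and $x$ lie in the same component — and if they lie in different components $f$ is flat at one end, so the remainder is controlled by the supremum of $|f^{(k)}|$, $k\le r$, divided by $|x-y|^{r-k}$; since $|x-y|$ is then bounded below (components are separated) and the supremum of each derivative is a polynomial-in-$r$ multiple of products of $\delta$'s that are far smaller than $r!$, the dominant contribution is simply $|f^{(r)}|=r!$ together with one more comparable term, giving the factor $2$. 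I expect the main obstacle to be the lower bound on $|f^{(q)}(0)|$: one must argue carefully that differentiating $q$ times does not produce catastrophic cancellation, which is why the factorization $f(x)=x g(x)$ (so that $f^{(q)}(0)=q\,g^{(q-1)}(0)$ and $g$ again has all-real nodes with the same geometry) and an inductive/Leibniz estimate on $g^{(q-1)}(0)$ is the cleanest route; the length estimates \eqref{delta}, \eqref{hh} then do the rest.
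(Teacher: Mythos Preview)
Your treatment of the first bound matches the paper's, and the paper likewise declares the bound on $\|f\|_r$ ``evident''.

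The issue is the lower bound for $|f^{(q)}(0)|$. Ironically, the formula you write down and then set aside,
\[
f^{(q)}(0)=q!\,(-1)^{r-q}\,e_{r-q}(z_1,\dots,z_r),
\]
is exactly what the paper exploits. The paper observes that $f^{(q)}(x)$ is a sum of $\binom{r}{q}$ products of the form $q!\prod_{k\notin S}(x-z_k)$; at $x=0$ each such product equals $q!(-1)^{r-q}\prod_{k\notin S}z_k$, and since all $z_k\ge 0$ and $r-q=2^n-2^m$ is even, \emph{every} term is nonnegative. There is therefore no cancellation at all, and $|f^{(q)}(0)|\ge q!$ times any single term --- in particular $q!\prod_{k=q+1}^{r}z_{i_k}$, the product of the $r-q$ largest nodes (those lying in $I_{1,s}\setminus I_{1,\,s+n-m}$), which is then bounded below via \eqref{hh} exactly as you outline.

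Your proposed alternative --- factor $f(x)=x\,g(x)$ and run a Leibniz/inductive argument on $g^{(q-1)}(0)$ --- does not sidestep the sign question: $g^{(q-1)}(0)$ is, up to a factorial, the very same elementary symmetric sum, and a Leibniz expansion peeled off one root at a time will produce terms of both signs unless at some stage you invoke the same positivity observation. So the route you call ``cleanest'' is unnecessary and, as written, incomplete; the one-line parity argument already does the job and is what the paper uses.
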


\begin{proof}
Fix $\tilde{x}$ that realizes $|f|_{0,K(\gamma)}$ and a chain of basic intervals containing this point:
$\tilde{x} \in I_{j_0,n+s} \subset I_{j_1,n+s-1} \subset \cdots \subset I_{j_n,s}=I_{1,s}$. Arguing as in
Lemma \ref{ext}, we see that
$|f|_{0,K(\gamma)} \leq  l_{j_0,n+s} \cdot l_{j_1,n+s-1} \cdot l_{j_2,n+s-2}^2\cdots  l_{1,s}^{2^{n-1}},$
which, by \eqref{delta}, gives the desired bound.

In order to estimate $|f^{(q)}(0)|,$ let us remark that $f^{(q)}(x)$ is a sum of $\binom{r}{q}$ products, each product has a coefficient $q!$ and consists of $r-q$ terms $(x-z_k)$. One of these products is $g(x):= \prod^{r}_{k=q+1}(x-z_{i_k}),$ where $z_{i_1}< z_{i_2}< \cdots < z_{i_r}.$ All products are nonnegative at $x=0$, since $r-q$ is even. From here, $|f^{(q)}(0)| \geq q! \cdot g(0)$. Taking into account the location of points from $Z$, we get $g(0)= \prod^{r}_{k=q+1} z_{i_k} > h^{2^m}_{1,n+s-m-1} \cdots h^{2^{n-1}}_{1,s}>(7/8)^{r-q} \cdot  \delta_{n+s-m-1}^{2^m} \cdots \delta_{s}^{2^{n-1}},$ by \eqref{hh}. The bound of $\lVert f \rVert_r$ is evident.
\end{proof}

In the next Lemma, for given $2^{n}\leq N < 2^{n+1},$ we consider   $\Omega_N(x)= \prod_{k=1}^{N}(x-z_k)$
with $Z_N=(z_k)_{k=1}^N,$ where the points are chosen on $I_{j,s}$ by the rule of increase of the type. Let
$u(x)=u(x, \delta_{s+n},I_{j,s}\cap K(\gamma))$ and, as above, $d_i(x, Z_N):=|x-z_{k_i}| \nearrow.$

\begin{lemma} \label{om}
The bound
$\,\,|(\Omega_N \cdot u)^{(p)}(x)|\leq 2^p\,(C_0+1)\,c_p\,\delta_{s+n}^{-p+1}\,N^p\,\prod_{k=2}^N d_k(x, Z_N)$

is valid for each $p<N$ and $x\in  \Bbb R.$
\end{lemma}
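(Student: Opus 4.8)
The plan is to estimate $(\Omega_N\cdot u)^{(p)}$ via the Leibniz rule, so we write
$(\Omega_N\cdot u)^{(p)}(x)=\sum_{i=0}^p\binom{p}{i}\Omega_N^{(i)}(x)\,u^{(p-i)}(x)$,
and then bound each of the two factors. For the derivative of $u=u(\cdot,\delta_{s+n},I_{j,s}\cap K(\gamma))$ we have by construction $|u^{(p-i)}(x)|\leq c_{p-i}\,\delta_{s+n}^{-(p-i)}\leq c_p\,\delta_{s+n}^{-(p-i)}$, using that $c_p\nearrow$. The key point is that $u(x)$ and all its derivatives vanish once $\mathrm{dist}(x,I_{j,s}\cap K(\gamma))>\delta_{s+n}$, so we only need to estimate $\Omega_N^{(i)}(x)$ for $x$ in the $\delta_{s+n}$-neighbourhood of $I_{j,s}\cap K(\gamma)$; in particular $\mathrm{dist}(x,Z_N)\leq\delta_{s+n}$ for such $x$ (since $Z_N$ meets every sufficiently deep basic subinterval), which is exactly the hypothesis needed to invoke the combinatorial product estimate implicit in the setup.

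Next I would bound $\Omega_N^{(i)}(x)$. Since $\Omega_N(x)=\prod_{k=1}^N(x-z_k)$, its $i$-th derivative is a sum of $\binom{N}{i}$ terms, each a product of $N-i$ factors $(x-z_k)$ times $i!$. Each such product of $N-i$ factors is at most $\prod_{k=i+1}^N d_k(x,Z_N)$ (the product of the $N-i$ largest distances, dropping the $i$ smallest), and since $d_1(x,Z_N)=\mathrm{dist}(x,Z_N)\leq\delta_{s+n}$ we may reinsert the missing small factors at the cost of a power of $\delta_{s+n}$: more precisely $\prod_{k=i+1}^N d_k(x,Z_N)\leq \delta_{s+n}^{-(i-1)}\prod_{k=2}^N d_k(x,Z_N)$ for $i\geq 1$, and for $i=0$ we just have $\prod_{k=1}^N d_k(x,Z_N)\leq\delta_{s+n}\prod_{k=2}^N d_k(x,Z_N)$. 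Combining, $|\Omega_N^{(i)}(x)|\leq i!\binom{N}{i}\,\delta_{s+n}^{-(i-1)}\prod_{k=2}^N d_k(x,Z_N)\leq N^i\,\delta_{s+n}^{-(i-1)}\prod_{k=2}^N d_k(x,Z_N)$ for all $0\leq i\leq p$, after absorbing the $i=0$ case into the same form.

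Finally I would assemble the pieces:
$|(\Omega_N\cdot u)^{(p)}(x)|\leq\sum_{i=0}^p\binom{p}{i}\,N^i\,\delta_{s+n}^{-(i-1)}\Big(\prod_{k=2}^N d_k(x,Z_N)\Big)\,c_p\,\delta_{s+n}^{-(p-i)}
= c_p\,\delta_{s+n}^{-p+1}\Big(\prod_{k=2}^N d_k(x,Z_N)\Big)\sum_{i=0}^p\binom{p}{i}N^i$,
and the sum $\sum_{i=0}^p\binom{p}{i}N^i=(1+N)^p\leq 2^p N^p$ for $N\geq1$. This yields $|(\Omega_N\cdot u)^{(p)}(x)|\leq 2^p\,c_p\,\delta_{s+n}^{-p+1}\,N^p\,\prod_{k=2}^N d_k(x,Z_N)$; the extra factor $C_0+1$ in the statement presumably comes from being slightly less efficient at one of the reinsertion steps (e.g. bounding a stray factor $|x-z_k|\leq l_{j,s}+\delta_{s+n}<(C_0+1)\delta_{s+n}$ by \eqref{delta} rather than by $\delta_{s+n}$ alone), which is harmless. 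The main obstacle is the bookkeeping of exactly which powers of $\delta_{s+n}$ appear when passing from $\prod_{k=i+1}^N$ to $\prod_{k=2}^N$; one must be careful that the hypothesis $p<N$ guarantees at least two factors survive so that $\prod_{k=2}^N d_k$ is genuinely the right normalization, and that the bound $\mathrm{dist}(x,Z_N)\leq\delta_{s+n}$ really does hold on the support of $u$ — this is where the rule of increase of the type and the estimate \eqref{delta} on interval lengths enter.
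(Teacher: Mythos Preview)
Your approach is the same as the paper's---Leibniz rule, then bound $|\Omega_N^{(i)}|$ by $\frac{N!}{(N-i)!}\prod_{k=i+1}^N d_k$ and compare to $\prod_{k=2}^N d_k$---but the central ``reinsertion'' step contains a genuine gap, and the justification you give for it is in the wrong direction.

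The inequality $\prod_{k=i+1}^N d_k \leq \delta_{s+n}^{-(i-1)}\prod_{k=2}^N d_k$ is equivalent to $\delta_{s+n}^{\,i-1}\leq d_2 d_3\cdots d_i$, i.e.\ it requires \emph{lower} bounds on $d_2,\ldots,d_i$. Your stated reason, ``since $d_1=\mathrm{dist}(x,Z_N)\leq \delta_{s+n}$'', is an \emph{upper} bound on $d_1$ and says nothing about $d_2,\ldots,d_i$. Moreover that upper bound is itself false: on the support of $u$ one only has $\mathrm{dist}(x,K\cap I_{j,s})\leq \delta_{s+n}$, and the nearest point of $K$ need not lie in $Z_N$. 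If $x_0\in I_{i,s+n}$ and the inner endpoint of $I_{i,s+n}$ (a type-$(s+n)$ point) is not among the $N-2^n$ extra nodes, then the nearest node is the outer endpoint, giving only $d_1\leq l_{i,s+n}+\delta_{s+n}\leq (C_0+1)\,\delta_{s+n}$ by \eqref{delta}. This is exactly where the factor $C_0+1$ enters; it is not slack.

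The paper closes the gap by a short geometric analysis of how $Z_N$ sits in $I_{m,s+n-1}$: it shows $d_1\leq (C_0+1)\delta_{s+n}$, $d_2\geq \delta_{s+n}/2$, and $d_3\geq 27\,\delta_{s+n}$ (using \eqref{hh} and \eqref{ll}). From monotonicity of the $d_k$ one then gets $d_2\cdots d_i\geq (\delta_{s+n}/2)(27\,\delta_{s+n})^{i-2}\geq \delta_{s+n}^{\,i-1}$ for $i\geq 3$, while $i=2$ needs the extra factor $C_0+1>2$ to absorb the loss $d_2\geq \delta_{s+n}/2$, and $i=0$ needs it for $d_1$. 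Once these bounds are in place, your assembly via $\sum_i\binom{p}{i}N^i\leq (N+1)^p\leq 2^pN^p$ is fine.
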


\begin{proof}
By Leibnitz's formula,
$|(\Omega_N \cdot u)^{(p)}(x)|\leq   \sum_{i=0}^p \binom{p}{i} |\,\Omega_N^{(i)}(x)|\,c_{p-i} \delta_{s+n}^{-p+i}.$
Since $d_k$ increases, we have $|\Omega_N^{(i)}(x)|\leq  \frac{N!}{(N-i)!}\prod_{k=i+1}^N d_k(x, Z_N).$
This gives
\begin{equation}\label{omega}
 |(\Omega_N \cdot u)^{(p)}(x)|\leq 2^p\,c_p\,\delta_{s+n}^{-p}\cdot
\max_{0\leq i\leq p}\,(N\,\delta_{s+n})^i \prod_{k=i+1}^N d_k(x, Z_N).
\end{equation}

The set $Z_N$ consists of $2^n$ endpoints of subintervals of the level $s+n-1$ covered by $I_{j,s}$
and $N-2^n$ points of the type $s+n.$ Here, $\mathrm{dist}(x, I_{j,s}\cap K)=|x-x_0| \leq \delta_{s+n}$
for some $x_0.$ Let $x_0 \in I_{i,s+n} \subset I_{m,s+n-1}.$  Then $I_{m,s+n-1}$
contains from 2 to 4 points of $Z_N.$ In all cases,
$d_1(x, Z_N)\leq l_{i,s+n}+ \delta_{s+n}\leq(C_0+1) \delta_{s+n},$ by \eqref{delta}. Also,
 $ \delta_{s+n}/2 \leq d_2 \leq (C_0+1) \delta_{s+n-1}.$ Here the lower bound corresponds to the case
$\#(I_{i,s+n} \cap Z_N)=2,$  whereas the upper bound deals with $\#(I_{m,s+n-1} \cap Z_N)=2.$
Similarly, $ d_3 \geq h_{m,s+n-1}-\delta_{s+n}.$ From \eqref{hh} and \eqref{ll} it follows that
 $d_3\geq 7/8 \,\,\delta_{s+n-1} -\delta_{s+n}\geq 27 \,\delta_{s+n}.$ This gives
$\delta_{s+n}^{i-1} d_{i+1}\cdots d_N \leq (C_0+1)d_2\cdots d_N$ for $0\leq i\leq p$
and, by \eqref{omega}, the lemma follows.
\end{proof}

We can now formulate our main result.

\begin{theorem}
Suppose $\gamma$ satisfies \eqref{ll}. Then $K(\gamma)$ has the extension property if and only if
 \eqref{Y} is valid.
\end{theorem}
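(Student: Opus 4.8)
The plan is to establish both directions via the Tidten--Vogt criterion: $K(\gamma)$ has $EP$ iff $\mathcal E(K(\gamma))$ satisfies (DN). The crucial bridge to the combinatorial condition \eqref{Y} is that the Robin-type quantities $B_k = 2^{-k-1}\log(1/\delta_k)$ control, up to harmless factors, both the local mesh sizes $\delta_s$ and the growth of the quantities $\prod_{k=q+1}^{N}d_k(z,Z)$ that enter the interpolation estimates. So the two halves of the argument will be (i) sufficiency: assuming \eqref{Y} (equivalently the geometric form \eqref{G}), show $W$ from Section 4 is continuous, hence $EP$ holds; (ii) necessity: assuming the negation \eqref{Not}, produce for each candidate dominating norm $\|\cdot\|_p$ a violation of (DN), using the test functions $f$ built in Lemma~\ref{f}.

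For the sufficiency direction I would take the operator $W$ already shown in Lemma~\ref{ext} to reproduce $f$ on $K$, and estimate $|W(f,\cdot)|_{q,I}$ for arbitrary $q$. The key point is that, for fixed $x$ and $s$, at most one accumulation term $A_{j,s}$ and at most one transition term $T_{k,s}$ are nonzero, so differentiating $W(f,x)$ reduces to bounding finitely many terms of the form $[L_N - L_{N-1}](f,\cdot)\cdot u$. Each such difference is $f(z_{N+1})$ (or a divided difference) times $\Omega_{N}\cdot u$ up to lower-order pieces, and Lemma~\ref{om} gives
$|(\Omega_N u)^{(p)}(x)| \le 2^p(C_0+1)c_p\,\delta_{s+n}^{-p+1} N^p \prod_{k=2}^N d_k(x,Z_N)$.
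Combining this with Lemma~A (which transfers $\delta_{s+n}\prod_{k=2}^N d_k(x,Z_N)$ to $\prod_{k=2}^{N+1}d_k(z,Z)$ for an interpolation node $z$) and then with Lemma~B (choosing a node $\tilde z$ in a subchain $J$ so that $\prod_{k=q+2}^{N+1}d_k(\tilde z,Z)\le \Pi(J)$, the minimal telescoping product) lets me estimate the divided difference of $f$ in terms of $\|f\|_r$ for a suitable $r=r(q)$. The geometric condition \eqref{G} is exactly what is needed to absorb the factors $\delta_{s+n-1}\delta_{s+n-2}^2\cdots\delta_{s+n-m}^{2^{m-1}}$ against a power $\delta_{s+n}^M$, so that the tail of the telescoping series converges in the $q$-norm with a bound $C\|f\|_r$. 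Summing over $s$ gives $|W(f)|_{q,I}\le C\|f\|_r$, i.e. $W$ is continuous, and $EP$ follows.

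For the necessity direction I would argue by contraposition: assume \eqref{Y} fails, so \eqref{Not} holds with some $\varepsilon, m$ and a sequence $s_j\uparrow\infty$ with accompanying $n=n(j)>N$. For the given $p$ (the putative dominating-norm index) I will pick $q=2^{m'}$ and $r=2^{n'}$ with $m'$ slightly larger than $m$ and $n'$ chosen from the bad scale, and use the function $f=f_j$ of Lemma~\ref{f} supported on $I_{1,s}$ (translated to touch $0$). Lemma~\ref{f} gives simultaneously an upper bound for $|f|_{0,K}$ (hence, with a little work, for $\|f\|_p$), a lower bound $|f^{(q)}(0)|\ge q!(7/8)^{r-q}\delta_{s+n-m-1}^{2^m}\cdots\delta_s^{2^{n-1}}$, and $\|f\|_r\le 2\cdot r!$. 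The quantity $|f^{(q)}(0)|$ forces $\|f\|_q$ (or an intermediate norm) to be large, while $\|f\|_p$ stays small and $\|f\|_r$ is controlled; expressing all three in terms of the $B_k$'s and using \eqref{Not} shows that no inequality $\|\cdot\|_q^2\le C\|\cdot\|_p\|\cdot\|_r$ can hold for any fixed $r$, contradicting (DN). Hence $K(\gamma)$ fails $EP$.

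I expect the main obstacle to be the sufficiency estimate, specifically the bookkeeping that turns the chain of Lemmas A, B and \ref{om} into a clean bound $|W(f)|_{q,I}\le C\|f\|_r$: one must track how the parameters $N, n, s$ and the chosen subchain $J$ depend on the point $x$ and on the level, verify that the exponent $M$ in \eqref{G} can be taken uniformly large enough to kill both the combinatorial constants $C_1^N$, $(8C_0/7)^{2^n}$ and the negative powers $\delta_{s+n}^{-p+1}$, and confirm convergence of the series over $s$ rather than just termwise smallness. The necessity direction is more mechanical once the right $f_j$ and exponents are chosen, though matching the support size of $f_j$ to the index $p$ of the alleged dominating norm requires some care.
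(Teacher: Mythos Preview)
Your sufficiency direction is essentially the paper's argument: fix $p$, choose $M=M(p)$ and then $m$ from \eqref{G}, set $q=2^m-1$, invoke the quotient-topology bound \eqref{r} to get $r$, and combine Lemma~\ref{om}, Lemma~A, Lemma~B and the divided-difference inequality to obtain $|A_{N,j,s}^{(p)}| \le C\,|||f|||_q\,\delta_{s+n}^{-p}N^p(2C_1)^N\prod_{k=2}^{q+1}d_k(\tilde z,Z)$, after which \eqref{G} kills the tail. Your outline matches this and correctly flags the bookkeeping as the only real work.

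The necessity direction has a genuine gap. You propose to violate the \emph{standard} $(DN)$ inequality $\|\cdot\|_q^2\le C\,\|\cdot\|_p\,\|\cdot\|_r$ for an arbitrary putative dominating index $p$, and you claim that the upper bound on $|f_j|_{0,K}$ from Lemma~\ref{f} gives, ``with a little work,'' an upper bound on $\|f_j\|_p$. It does not. The Whitney norm $\|f_j\|_p$ involves $|f_j^{(k)}|$ for $k\le p$ and the cross-interval Taylor remainders, and for the polynomial $f_j$ these derivative terms are \emph{not} small: indeed Lemma~\ref{f} itself shows $|f_j^{(q)}(0)|$ is large, and already $|f_j'|$ on $I_{1,s}$ is far larger than $|f_j|_{0,K}$. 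So for any $p\ge 1$ the bound you need on $\|f_j\|_p$ is not available from Lemma~\ref{f}, and working it out would require separate (and nontrivial) estimates on all low-order derivatives of $f_j$ together with control of the remainder across the gap $h_{1,s-1}$.

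The paper avoids this entirely by invoking Frerick's refinement (Prop.~3.8 in \cite{fre}): $(DN)$ for $\mathcal E(K)$ is equivalent to the inequality $|\cdot|_q^{1+\varepsilon}\le C\,|\cdot|_{0,K}\,\|\cdot\|_r^{\varepsilon}$ for every $\varepsilon>0$ and $q$, with suitable $r,C$. This pins the ``small'' norm to $|\cdot|_{0,K}$ and introduces a free parameter $\varepsilon$ that is then matched to the $\varepsilon$ appearing in \eqref{Not}. With this form, all three ingredients from Lemma~\ref{f} (the upper bound on $|f|_{0,K}$, the lower bound on $|f^{(q)}(0)|\le |f|_q$, and $\|f\|_r\le 2\,r!$) plug in directly, and the logarithm of the resulting ratio is exactly $2^{n+s}$ times the bracket in \eqref{Not}. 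Your argument becomes correct once you replace the standard $(DN)$ form by Frerick's version and drop the unsupported claim about $\|f_j\|_p$; without that substitution the necessity half does not go through.
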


\begin{proof}
Recall that the extension property of a set is equivalent to the condition $(DN)$ of the
corresponding Whitney space. Due to L. Frerick [Fr, Prop. 3.8], ${\mathcal E}(K)$
satisfies $(DN)$ if and only if for any $\varepsilon >0$ and for any $q \in \Bbb N$ there
exist $r \in \Bbb N$ and $C >0$ such that
$ |\cdot |_{\,q}^{1+ \varepsilon}\leq C |\cdot|_{\,0, K}\,\,\,||\cdot||_{\,r}^{\,\varepsilon}.$
Hence, in order to prove that  \eqref{Y} is necessary for $EP$ of $K(\gamma),$
we can show that \eqref{Not} implies the lack of $(DN)$ for ${\mathcal E}(K(\gamma))$,
that is there exist  $\varepsilon>0$ and
$q$ such that for any $r \in \Bbb N$ one can find a sequence
$(f_j) \subset {\mathcal E} (K(\gamma))$ with
$$
|\,f_{j}|_{\,q}^{1+\varepsilon} \,|\,f_{j}|_{\,0, K(\gamma)}^{-1}\,\,||\,f_{j}||_{\,r}^{-\,\varepsilon}\,\,
 \rightarrow \infty \,\,\,\,\,
\mbox{as}\,\,\,\,\, j\rightarrow\infty.
$$
Let us fix $\varepsilon$ and $m$ from the condition \eqref{Not} and take $q=2^m.$
For each fixed large $r$ (clearly, we can take it in the form $r=2^n$) and $s_j$ defined
by \eqref{Not}, we consider the function $f_j$ given in Lemma  \ref{f} for $s=s_j.$ Then
$$C\, |\,f_{j}|_{\,q}^{1+\varepsilon} \,|\,f_{j}|_{\,0, K(\gamma)}^{-1}\,\,||\,f_{j}||_{\,r}^{-\,\varepsilon}
\geq (\delta_{n+s} \cdot \delta_{n+s-1} \cdot \delta_{n+s-2}^2 \cdots \delta_{n+s-m}^{2^{m-1}} )^{-1}
( \delta_{n+s-m-1}^{2^{m}} \cdots   \delta_{s}^{2^{n-1}})^{\varepsilon}, $$
where $C$ does not depend on $j$. The right side here goes to infinity. Indeed, its logarithm
is $2^{n+s}\,\{2 B_{n+s}+  B_{n+s-1}+\cdots +  B_{n+s-m} - \varepsilon [ B_{n+s-m-1}+ \cdots +B_s]\}$
and the expression in braces exceeds $ B_{n+s}$  by \eqref{Not}. Therefore the whole value exceeds
$2^{n+s}B_{n+s}=\frac{1}{2} \log\frac{1}{\delta_{s+n}},$ which goes to infinity when $s=s_j$
increases. Thus,  $EP$ of $K(\gamma)$ implies  \eqref{Y}.\\

For the converse, we consider the extension operator $W$ from Section 5, where $(n_s)$ are chosen as in
\eqref{ns}. We proceed to show that $W$ is bounded provided \eqref{G}.
Let us fix any natural number $p.$ This $p$ and $C_1$ from Lemma A define $M=2p+2+\log (2C_1).$
We fix $m\in  \Bbb N$ that corresponds to $M$ it the sense of \eqref{G}.
Let $q=2^m-1$ and $r=r(q)$ be defined by \eqref{r}. We will show that the bound  $|(W(f,x))^{(p)}|\leq C\,||f||_r$
is valid for some constant $C=C(p)$ and all  $f\in {\mathcal E}(K), \,x \in \Bbb R.$

Given $f$ and $x$, let us consider terms of accumulation sums. For fixed $s\in \Bbb N$ we choose $j\leq 2^s$ such
that $x \in I_{j,s}.$ Fix $N$ with $ 2^n\leq N < 2^{n+1}$ for $n_{s-1}-1\leq n \leq n_s-1,$ so
$M_s+1\leq N \leq N_s.$ For large enough $s$ the value $N$ exceeds $p$ and $q$.
As in Lemma A, let $Z=(z_k)_{k=1}^{N+1}=Z_N\cup\{z_{N+1}\}.$
By Newton's representation of
interpolating operator in terms of divided differences, we have
$$
A_{N,j,s}(f,x)= [z_1,\cdots,z_{N+1}]f\,\cdot \Omega_N(x) \,u(x) ,$$
where  $\Omega_N$ and $u$ are taken as in Lemma \ref{om}. We aim to show that

\begin{equation}\label{an} N_s\,|A_{N,j,s}^{(p)}(f,x)|\leq s^{-2}\,||f||_r
\end{equation}
 for large $s$. This gives convergence of the
accumulation sums.

Since divided differences are symmetric in their arguments, we can use (4) from \cite{AG} :
\begin{equation}\label{DivDif}
|\,\,[z_1,\cdots,z_{N+1}]f\,| \leq 2^{N-\,q}\, |||f|||_{\,q}\,\, ( \Pi(J_0))^{-1},
\end{equation}
where $\Pi(J_0)= min_{1\leq j \leq N+1-q}\Pi(J)$ for $\Pi(J)$ defined in Lemma B.
Fix $\tilde z \in J_0$ that corresponds to this set in the sense of Lemma B.

Applying Lemma \ref{om} and Lemma A for $z=\tilde z$  yields
$$ |(\Omega_N \cdot u)^{(p)}(x)|\leq C \, \delta_{s+n}^{-p} \, N^p\, C_1^N \prod_{k=2}^{N+1} d_k(\tilde z, Z)
\,\,\,\mbox{with}\,\, C=2^p(C_0+1)\,c_p.$$
On the other hand, \eqref{DivDif} and Lemma B for $J_0$ give
$$ |\,\,[z_1,\cdots,z_{N+1}]f\,| \leq 2^{N-\,q}\, |||f|||_{\,q}\,\, \prod_{k=q+2}^{N+1} d_k^{-1}(\tilde z, Z).$$

Combining these we see that
$$ |A_{N,j,s}^{(p)}(f,x)| \leq C \,\, |||f|||_{\,q}\, \delta_{s+n}^{-p} \, N^p\, (2C_1)^N \prod_{k=2}^{q+1} d_k(\tilde z, Z).$$

Recall that the set $Z$ includes all points of the type $\leq s+n-1$ on $I_{j,s}$ and $N-2^n$ points of the
type $s+n.$ We can only enlarge the product $\prod_{k=2}^{q+1} d_k(\tilde z, Z)$ if we will consider
only distances from $\tilde z$ to points from $Y_{s+n-1}\cap I_{j,s}.$ Arguing as in Lemma \ref{ext}, we
get  $\prod_{k=2}^{q+1} d_k(\tilde z, Z)\leq C_0^q \delta_{s+n-1}\,\delta_{s+n-2}^2 \cdots \delta_{s+n-m}^{2^{m-1}}.$ We observe that $d_1(\tilde z, Z)=0$ is not included into the product on the left side. By \eqref{G},
 $\prod_{k=2}^{q+1} d_k(\tilde z, Z)\leq C_0^q \,\,\delta_{s+n}^M.$

In order to get \eqref{an}, it is enough to show that
\begin{equation}\label{last}
s^2\,\,N_s \, N^p\, (2C_1)^N \,\delta_{s+n}^{M-p} \to 0 \,\,\mbox{as}\,\,s\to \infty.
\end{equation}
Here, by \eqref{ns}, $N_s \, N^p < 2^{n_s(p+1)}\leq \log (1/\delta_s)^{p+1}< \delta_s^{-p-1}.$ Also,
$(2C_1)^N < \delta_s^{-\log(2C_1)}.$
Clearly, we can replace $\delta_{s+n}$ in \eqref{last} by $\delta_{s}.$ Then,
because of the choice of $M,$ the product in \eqref{last} does not exceed
$s^2 \, \delta_{s},$ which approaches 0 as $s\to \infty,$ since, by \eqref{ll}, $\delta_s\leq 32^{-s}.$

Similar arguments are used for terms of the transition sums.
\end{proof}

\section{Extension of basis elements}

Bases in the spaces ${\mathcal E}(K(\gamma))$

\section{Two examples}
First we consider regular sequences $(B_k)_{k=1}^{\infty}$.
 Let $\beta_k=(\log B_k) /k.$ We say that $(B_k)_{k=1}^{\infty}$ is
{\it regular} if, for some $k_0$, both sequences $(B_k)_{k=k_0}^{\infty}$ and $(\beta_k)_{k=k_0}^{\infty}$
are monotone. Recall that  $(B_k)_{k=1}^{\infty}$ has {\it subexponential growth} if $\beta_k \to 0$ as
$k\to \infty.$

For example, given $a>1$, let
$\gamma^{(1)}_k=k^{-a},\gamma^{(2)}_k=a^{-k},\gamma^{(3)}_k=\exp(-a^k)$ for large enough $k.$
Then $\gamma^{(j)}$ for $1\leq j \leq 3$ generate regular  $B^{(j)}$ with
$B^{(1)}_k \sim 2^{-k-1}\,a\,k\log k, B^{(2)}_k \sim 2^{-k-2}\,k^2\, \log a, B^{(3)}_k \sim (a/2)^{k+1}/(a-1).$
Here, $\beta^{(1)}_k, \beta^{(2)}_k  \nearrow -\log 2$ and $\beta^{(3)}_k  \to -\log(a/2),$ so  $B^{(j)}$
are not of subexponential growth, except $B^{(3)}$ for $a=2.$ We see that \eqref{Y} is valid in the first
two cases and in the third case with $a\leq 2.$

More generally, \eqref{Y} is valid for each
monotone convergent $(B_k)_{k=1}^{\infty}$. Indeed, if $B_k \searrow B\geq 0,$ then LHS of \eqref{Y} does
not exceed $(n+1)^{-1}.$ If $B_k \nearrow B,$ then we take $s_0$ with $B_s > B/2$ for $s\geq s_0.$
Then $B_s+\cdots B_{s+n} \geq (n+1)B/2$ and  LHS of \eqref{Y} $<2(n+1)^{-1}.$ This covers the case of regular
sequences $(B_k)_{k=1}^{\infty}$ when $\beta_k$  are negative. Let us show that \eqref{Y} is valid as well for divergent regular sequences $(B_k)_{k=1}^{\infty}$
of subexponential growth.

\begin{theorem}  \label{sg}
Let $(B_k)_{k=1}^{\infty}$ be regular with positive values of $\beta_k$. Then \eqref{Y} is valid
if and only if $(B_k)_{k=1}^{\infty}$ has subexponential growth.
\end{theorem}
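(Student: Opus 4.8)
The plan is to analyze condition \eqref{Y} directly through its reformulation \eqref{Y4}, exploiting regularity to reduce everything to the behavior of the single sequence $(\beta_k)$. First I would dispose of the easy direction: if $(B_k)$ does \emph{not} have subexponential growth, then since it is regular with positive $\beta_k$, the sequence $\beta_k$ is monotone and bounded away from $0$, so $\beta_k \to \beta_* > 0$ (or $\beta_k \nearrow \infty$); in either case $B_k$ grows at least geometrically, $B_{k+1}/B_k \geq \rho$ for some $\rho > 1$ and all large $k$. Then $B_{s+n} \geq \rho^{\,n}(\rho-1)^{-1}\cdot(\text{something}) \geq c\,(B_s + \cdots + B_{s+n})$ uniformly — concretely, $\sum_{k=s}^{s+n} B_k \leq B_{s+n}\sum_{i\geq 0}\rho^{-i} = B_{s+n}\,\rho/(\rho-1)$, so the ratio in \eqref{Y} stays bounded below by $(\rho-1)/\rho > 0$ and cannot tend to $0$. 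Hence \eqref{Y} fails. This handles the "only if" part.

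For the "if" direction, assume $\beta_k \to 0$, i.e. $B_k = \exp(\beta_k k)$ with $\beta_k \to 0^+$, and $(\beta_k)$ eventually monotone. I would verify \eqref{Y4}: given $\varepsilon$, find $m, n_0, s_0$ with $B_{s+n} < \varepsilon(B_{s+n-m} + \cdots + B_{s+n-1})$ for $n \geq n_0$, $s \geq s_0$. It suffices to bound $B_{s+n}/B_{s+n-1}$ from above by $\varepsilon/m$ once indices are large; writing $t = s+n$, the ratio is $\exp(\beta_t t - \beta_{t-1}(t-1))$. If $\beta_k \searrow 0$ then $\beta_t t - \beta_{t-1}(t-1) = \beta_t - (t-1)(\beta_{t-1}-\beta_t) \leq \beta_t \to 0$, so the ratio tends to $1$; this alone is not enough since we need it below $\varepsilon/m$ and the ratio $\to 1$. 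The fix is to not use consecutive terms but to observe that for a subexponential regular sequence, $\log B_{s+n} - \log B_{s+n-m} = \beta_{s+n}(s+n) - \beta_{s+n-m}(s+n-m)$, and we instead want the \emph{sum} $B_{s+n-m}+\cdots+B_{s+n-1}$ to dominate $B_{s+n}$. Since $\beta_k \to 0$, for the largest index $t=s+n$ we have $\log(B_t/B_{t-1}) \to 0$ whenever $\beta_k\searrow$, giving $B_t \leq 2 B_{t-1} \leq 2m\cdot\frac{1}{m}(B_{t-1}+\cdots+B_{t-m})\cdot(\text{since terms are comparable})$; more carefully, consecutive ratios near $1$ make all of $B_{t-1},\dots,B_{t-m}$ comparable to $B_t$ up to a factor $2^m$, so $B_{t-m}+\cdots+B_{t-1} \geq m\,2^{-m}B_t$, and choosing $m$ with $m\,2^{-m}\cdot\varepsilon^{-1}\geq 1$ — wait, that forces $m$ large while $2^{-m}$ shrinks. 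The correct route: choose $m$ first depending on $\varepsilon$, then use that for $s$ large the ratios $B_{k+1}/B_k$ are so close to $1$ (not just bounded) that $B_{t-1}+\cdots+B_{t-m} \geq (m/2)B_t \geq \varepsilon^{-1}B_t$ once $m \geq 2/\varepsilon$ and $s_0$ is large enough that each ratio exceeds, say, $(1-1/(2m))$. When $\beta_k \nearrow 0$ a symmetric but easier argument works since then $B_k$ is eventually increasing but still subexponentially, so again consecutive ratios $\to 1$.

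The main obstacle I anticipate is making the \emph{uniformity in $s$} in \eqref{Y} genuinely work: subexponential growth gives $\beta_k \to 0$, but the ratio $B_{k+1}/B_k = \exp(\beta_{k+1}(k+1)-\beta_k k)$ involves $k(\beta_{k+1}-\beta_k)$, which need not be small merely because $\beta_k\to 0$ — e.g. $\beta_k = 1/\log k$ has $k(\beta_k - \beta_{k+1}) \approx 1/\log^2 k \to 0$, fine, but one must rule out pathological monotone $\beta_k$. Here regularity is essential: monotonicity of $\beta_k$ together with $\beta_k \to 0$ forces $\sum |\beta_{k+1}-\beta_k|$ to telescope, and $k(\beta_k-\beta_{k+1})$ is controlled because a monotone sequence tending to $0$ cannot have persistently large increments scaled by $k$. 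I would make this precise via an Abel-summation / telescoping estimate on $\log(B_{s+n}) - \log(B_{s+n-m})$, splitting into the "drift" term $\beta_{s+n-m}\cdot m$ and the "variation" term $\sum_{j}(s+n-j)(\beta \text{-increments})$, and bounding the latter using monotonicity so that it is $o(1)$ as $s\to\infty$ uniformly in $n$. Once the estimate $\log(B_{s+n}/B_{s+n-j}) = o(j) + O(1)$ holds uniformly, comparability of the last $m$ terms follows and \eqref{Y4} — hence \eqref{Y}, hence by the Main Theorem the extension property — is established.
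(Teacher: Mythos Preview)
Your ``if'' direction is essentially correct and uses a somewhat different mechanism than the paper. Since $\beta_k>0$ is monotone with limit $0$, necessarily $\beta_k\searrow 0$; then $\log(B_{k+1}/B_k)=\beta_{k+1}-k(\beta_k-\beta_{k+1})\le\beta_{k+1}\to 0$, while from below either $B_k\nearrow$ (ratio $\ge 1$) or $B_k\searrow B\ge 1$ (as $B_k>1$), so in every case $B_{k+1}/B_k\to 1$. From this your \eqref{Y4}-argument with $m\sim 2/\varepsilon$ and $s_0$ large enough to force ratios into $(1-\tfrac{1}{2m},1+\tfrac{1}{2m})$ goes through cleanly. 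The paper instead bounds the whole sum by a geometric series via $B_k\ge b^k$ with $b=e^{\beta_{s+n}}$ and splits according to whether $b^n<2$; your route is a bit more elementary, the paper's gives an explicit quantitative bound $\max\{4/(n+1),\,4\beta_n\}$.

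The ``only if'' direction has a genuine gap. You assert that if $(B_k)$ is not of subexponential growth then $B_{k+1}/B_k\ge\rho>1$ for all large $k$, and then bound $\sum_{k=s}^{s+n}B_k\le B_{s+n}\cdot\rho/(\rho-1)$. This is valid when $\beta_k\nearrow$, since then $(k+1)\beta_{k+1}-k\beta_k\ge\beta_k\ge\beta_1>0$. But it \emph{fails} when $\beta_k\searrow\varepsilon_0>0$: take $\beta_k=1+2^{-j}$ for $2^j\le k<2^{j+1}$. Then $(\beta_k)$ is non-increasing with limit $1$ and $(B_k)$ is increasing, so the sequence is regular with positive $\beta_k$; yet
\[
\frac{B_{2^{j+1}}}{B_{2^{j+1}-1}}=\exp\!\bigl(2^{j+1}(1+2^{-j-1})-(2^{j+1}-1)(1+2^{-j})\bigr)=\exp(2^{-j})\longrightarrow 1,
\]
so no uniform $\rho>1$ exists and your tail bound collapses. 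The paper treats this case by a substantially different device: one selects a subsequence $(s_j)$ so that the piecewise-linear graph through $(s_j,\beta_{s_j})$ is a convex envelope of $(k,\beta_k)$, chooses the spacing so that \eqref{beta} holds, replaces $\beta_k$ on each segment by its linear interpolant $\tilde\beta_k=ak+b$, and then estimates $\sum_{k=s}^{s+n}B_k\le\int_s^{s+n}e^{g(x)}\,dx+B_{s+n}$ with $g(x)=ax^2+bx$ via integration by parts, using \eqref{beta} to bound $g'(s+n)$ from below. Some argument of this kind---exploiting a carefully chosen subsequence rather than a uniform ratio bound---is needed; the monotone-decreasing case cannot be handled by your geometric-series estimate.
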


\begin{proof}
A regular sequence  $(B_k)_{k=1}^{\infty}$ is not of subexponential growth, provided $\beta_k >0,$
in the following three cases:  $\beta_k \nearrow \beta <\infty, \,\beta_k \nearrow \infty$
and  $ \beta_k \searrow \varepsilon_0>0.$ We aim to show that \eqref{Y} is not valid under the circumstances.

In the first case, given $s$ and $n$, let $b=\exp \beta_{s+n}.$ Then $b-1\geq \exp \beta_1-1> \beta_1>0$
and $b\leq \exp \beta.$ Here, $\sum_{k=s}^{s+n}B_k< b^{s+n+1}/(b-1)$ as $B_k=\exp(k \beta_k)\leq b^k$
for such $k$. Therefore, $B_{s+n}/\sum_{k=s}^{s+n}B_k > (b-1)/b> \beta_1/\exp \beta,$ which contradicts
 \eqref{Y}.

If $\beta_k \nearrow \infty$ then, by the same argument, $B_{s+n}/\sum_{k=s}^{s+n}B_k > (b-1)/b>1/2$
for $s\geq s_0,$ where $s_0$ is fixed with $\exp \beta_{s_0}>2.$

Suppose $\beta_k \searrow \varepsilon_0.$  We fix indices $s_1<s_2 < \cdots$ such that the  intervals $I_j$
connecting points $(s_j,\beta_{s_j})$ and $(s_{j+1},\beta_{s_{j+1}})$ form a convex envelope
of the set $(k,\beta_k)$ on the plane. We start from $s_1=\max \{s: \beta_s=\beta_1\}.$ If $s_j$ is chosen,
then we take $s_{j+1}$ with he property: for each $k$ with $s_j\leq k \leq s_{j+1}$ the point $(k,\beta_k)$
is not over $I_j$. At any step we can take the next value so large that the slopes of $I_j$  increases to zero.
In addition, given  $s_j$, we take $s_{j+1}$ such that
\begin{equation}\label{beta}
(4- 2\,s_j/s_{j+1}) \beta_{s_{j+1}}\geq (3-s_j/s_{j+1}) \beta_{s_j},
\end{equation}
which is possible as $\beta_k$ decreases to a positive limit.

For fixed $j,$ we take $s=s_j$ and $s+n=s_{j+1}.$ Let $\tilde \beta_k=ak+b$ with $a=-(\beta_s-\beta_{s+n})/n$
and $b=\beta_s+ (\beta_s-\beta_{s+n})\,s/n$ for $s\leq k \leq s+n,$ so the points $(k, \tilde \beta_k)$
are located just on the interval $I_j$. Also, let $g(x) = ax^2+bx$ and
$\tilde B_k =\exp g(k)=\exp(k\,\tilde \beta_k)$ on $[s, s+n].$ Of course, $\tilde B_s=B_s$ and
$\tilde B_{s+n}=B_{s+n}.$

It is easy to check that the function $g$ increases on this interval. Hence,
$\sum_{k=s}^{s+n}B_k \leq \sum_{k=s}^{s+n}\tilde B_k <\int_s^{s+n}g(x)\,dx+B_{s+n}.$ By integration by parts,
$\int_s^{s+n}g(x)\,dx = g(n+s)\cdot [2a(n+s)+b]^{-1}-g(s)\cdot [2as+b]^{-1}+2a\int_s^{s+n}g(x)(2ax+b)^{-2}dx.$
We neglect the last term, as $a<0$, and the second term, as $2as+b=g'(s)>0.$ Also,
$2a(n+s)+b= (2+s/n)\beta_{s+n}-(1+s/n)\beta_s \geq \beta_s/2\geq \ \varepsilon_0/2,$ by \eqref{beta}. Hence
$\int_s^{s+n}g(x)\,dx < 2\, B_{s+n}/\varepsilon_0$ and
$B_{s+n}/\sum_{k=s}^{s+n}B_k > \varepsilon_0/(2+\varepsilon_0),$ so \eqref{Y} is not valid.\\

We proceed to show that \eqref{Y} is valid for $\beta_k \searrow 0,$ that is in the case of subexponential growth
of  $(B_k)_{k=1}^{\infty}$. Here, for fixed large $s$ and $n,$ we estimate $\sum_{k=s}^{s+n}B_k$ from below.
Let $b=\exp \beta_{s+n}.$ Then $B_k\geq b^k$ for $s\leq k \leq s+n$. Therefore,
$B_{s+n}/\sum_{k=s}^{s+n}B_k \leq \frac{b^{n}(b-1)}{b^{n+1}-1}.$

If $b^n<2$ for the given $s$ and $n$ then $b^{n+1}-1>(n+1)\beta_{s+n}.$ On the other hand,
$\exp \beta_{s+n}-1 <2 \,\beta_{s+n}$ for $\beta_{s+n}<1.$ Thus the fraction above does not exceed $4/(n+1).$

Otherwise, $b^n \geq 2$ and $b^{n}< 2(b^{n+1}-1).$ Here the fraction does not exceed $4\,\beta_{s+n}.$
It follows that  $B_{s+n}/\sum_{k=s}^{s+n}B_k \leq \max\{4/(n+1), 4\,\beta_{n}\},$ which is the desired
conclusion.   \end{proof}

Our next objective is to consider irregular sequences $(B_k)_{k=1}^{\infty}$ (compare with Ex.6 in \cite{mia}).
Given two sequences, $(k_j)_{j=1}^{\infty} \subset \Bbb N$ with $k_{j+1}-k_{j} \nearrow \infty$
 and $(\varepsilon_j)_{j=1}^{\infty}$ with $\varepsilon_j \searrow 0,$ let $\gamma_k =(k+5)^{-2}$ for
 $k \ne k_j$ and $\gamma_{k_j} =(k_j+5)^{-2}\varepsilon_j.$ Then $\gamma$ satisfies \eqref{ll} with
 $\delta_k=(5!/(k+5)!)^2\,\varepsilon_1\varepsilon_2\cdots \varepsilon_j$ for $k_j\leq k < k_{j+1}.$
 Let  $A_j:=\log\frac{1}{\varepsilon_1\varepsilon_2\cdots \varepsilon_j}.$ We will consider only sequences
 with the property
 \begin{equation}\label{irr}
 k_{j+1}^2 \cdot A_j^{-1} \to 0 \,\,\, \mbox{as}\,\,\,j\to \infty.
\end{equation}
 Provided this condition, $ B_k=2^{-k} \log\frac{(k+5)!}{5!}+2^{-k-1}\,A_j \sim 2^{-k-1}\,A_j$ for
 $k_j\leq k < k_{j+1}.$
 In addition, an easy computation shows that for large $j,$
 \begin{equation}\label{sum}
 B_{k_j}+ B_{k_j+1}+\cdots +B_{k_{j+1}-1}< 3\,B_{k_j}.
\end{equation}
 Now we can construct different examples of compact sets $K(\gamma)$ without extension property.

 {\bf Example 2}. Let $A_j=2^{k_j},$ so $\varepsilon_j=\exp(-2^{k_j}+ 2^{k_{j-1}})$ for $j\geq 2$ and
$\varepsilon_1=\exp(-2^{k_1}).$ In this case, \eqref{irr} is valid under mild restriction
$2^{-k_j}\,k_{j+1}^2 \to 0$ as $j\to \infty.$ Let us take $s=k_j, n= k_{j+1}-k_j.$ Then $B_{s+n}>2^{-k_{j+1}-1}\,A_{j+1}=1/2$
and, by \eqref{sum}, $B_s+\cdots + B_{s+n-1}<3\,B_s < 4 \cdot 2^{-k_j-1}\,A_{j}=2.$ This gives
\eqref{Not} with $\varepsilon=1/4$ and $m=0.$

 \section{Extension Property of $K(\gamma)$ and Hausdorff measures}

From now on, $h$ is a {\it dimension function}, which means that $h:(0,T) \to (0,\infty)$ is continuous,
nondecreasing and $h(t)\to 0$ as $t \to 0.$ The $h-$Hausdorff content of $E\subset \Bbb R$ is defined as
$$ M_h(E)=\inf\{\,\sum h(|G_i|): E \subset \cup G_i\}$$
and the $h-$Hausdorff measure of $E$ is
$$ \Lambda_h(E)=\liminf_{\delta\to 0}\{\,\sum h(|G_i|): E \subset \cup G_i, |G_i|\leq \delta\}.$$

Here we consider at most countable coverings of $E$ by intervals (open or closed).

It is easily seen that $ M_h(E)=0$ if and only if $ \Lambda_h(E)=0.$
We write $h_1 \prec h_2$ if $h_1(t)=o(h_2(t))$ as $t\to 0.$ Let
$h_1 \approx h_2$ if $C^{-1}h_1(t)\leq  h_2 \leq C h_1(t)$ for some constant $C\geq 1$ and $0<t\leq t_0<T.$
We will denote by $h_0$ the function $h_0(t) = (\log \frac{1}{t})^{-1}$ with $0<t<1,$ which defines the
logarithmic measure of sets.

A set $E$ is called {\it dimensional} if there is at least one dimension function $h$ that makes $E$
an $h-$set, that is $0 < \Lambda_h(E)<\infty.$ In our case, the set $K(\gamma)$ is dimensional. In \cite{jat}, following
Nevanlinna [Nev], the corresponding dimension function was presented. Let $\eta(\delta_k)=k$ for $k\in \mathbb{Z}_+$ with $\delta_0:= 1$ and $\eta(t)= k+ \log{\frac{\delta_k}{t}}/ \log{\frac{\delta_k}{\delta_{k+1}}}$
for $\delta_{k+1}<t< \delta_{k}.$ Then $h(t):=2^{-\eta(t)}$ for $0<t\leq 1.$ Clearly, $h(\delta_k)=2^{-k}.$

\begin{proposition}
Let $\gamma$ satisfy \eqref{ll} and $h$ be defined as above.
 Then  $ \Lambda_h(K(\gamma))=1$.
\end{proposition}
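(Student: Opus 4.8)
The plan is to compute $\Lambda_h(K(\gamma))$ by establishing matching upper and lower bounds, both equal to $1$. For the upper bound I would use the natural covering of $K(\gamma)$ by the basic intervals of level $s$: since $K(\gamma)\subset E_s=\bigcup_{j=1}^{2^s}I_{j,s}$, and by \eqref{delta} each $I_{j,s}$ has length $l_{j,s}<C_0\,\delta_s$ with $l_{j,s}>\delta_s$, we get $\sum_{j=1}^{2^s}h(|I_{j,s}|)$. Using that $h$ is nondecreasing and $h(\delta_k)=2^{-k}$, together with the fact that $\eta$ is essentially the inverse of $k\mapsto\delta_k$, one shows $h(l_{j,s})$ is close to $h(\delta_s)=2^{-s}$ up to a factor controlled by $C_0$; more precisely $\eta(l_{j,s})\ge \eta(\delta_s)-o(1)$ because $\log(\delta_{s-1}/\delta_s)=\log(1/\gamma_s)\to\infty$ under the summability assumption in \eqref{ll} (so $\delta_{s-1}/\delta_s\to\infty$, making the extra interval $[\delta_s,C_0\delta_s]$ negligible on the $\eta$-scale). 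Hence $\sum_{j=1}^{2^s}h(|I_{j,s}|)\le 2^s\cdot 2^{-s}(1+o(1))=1+o(1)$, and since $\max_j|I_{j,s}|\to 0$ this yields $\Lambda_h(K(\gamma))\le 1$.

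For the lower bound I would prove that for \emph{any} countable cover of $K(\gamma)$ by intervals $G_i$ with $|G_i|$ small, $\sum_i h(|G_i|)\ge 1-o(1)$. By compactness reduce to finite covers, and by enlarging slightly reduce to covers by finitely many closed intervals. The standard device is a mass-distribution (Frostman-type) argument: put on $K(\gamma)$ the natural Cantor measure $\nu$ giving each $I_{j,s}$ mass $2^{-s}$ (the weak limit of uniform distributions on the $E_s$). Then $\nu(K(\gamma))=1$, so it suffices to show $\nu(G)\le h(|G|)(1+o(1))$ for every interval $G$ of small diameter, uniformly; summing over a cover then gives $1=\nu(K(\gamma))\le\sum_i\nu(G_i)\le (1+o(1))\sum_i h(|G_i|)$. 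To estimate $\nu(G)$ for $|G|=t$ with $\delta_{s+1}\le t<\delta_s$: an interval of this length meets at most a bounded number (in fact, because of the gap estimate \eqref{hh}, $h_{j,s}>\tfrac78\delta_s$, so at most $O(1)$) of level-$s$ intervals $I_{j,s}$, hence $\nu(G)\le O(1)\cdot 2^{-s}$. But one needs the sharper statement $\nu(G)\le h(t)(1+o(1))=2^{-\eta(t)}(1+o(1))$, so the rough bound $O(1)2^{-s}$ must be improved: if $t$ is much smaller than $\delta_s$ (i.e. $\eta(t)$ is well above $s$), then $G$ can only meet intervals of a deeper level $I_{j,s'}$ with $\delta_{s'}\gtrsim t$, and a careful count using \eqref{delta}–\eqref{hh} shows $\nu(G)\lesssim 2^{-s'}\lesssim 2^{-\eta(t)+O(1)}$; the $O(1)$ is absorbed into the $(1+o(1))$ exactly because, as in the upper bound, $\log(1/\gamma_k)\to\infty$ makes one level's worth of discrepancy vanish on the $\eta$-scale.

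The main obstacle is the lower bound, and specifically making the constant come out to exactly $1$ rather than merely $\Lambda_h(K(\gamma))>0$. The crux is the interplay between the definition of $\eta$ (which interpolates logarithmically between the values $\delta_k$) and the two-sided bound $\delta_s<l_{j,s}<C_0\delta_s$ on interval lengths: one must check that replacing $l_{j,s}$ by $\delta_s$, and replacing an arbitrary covering interval $G$ by the union of level intervals it meets, each costs only a multiplicative $1+o(1)$ in $h$. This is where the hypothesis $\sum\gamma_k<\infty$ (equivalently $\gamma_k\to0$, so $\log(1/\gamma_k)\to\infty$) is essential: it guarantees $\delta_{k}/\delta_{k+1}\to\infty$, so that $\eta$ changes by $o(1)$ over the range $[\delta_{k+1},C_0\delta_{k+1}]$ and the bounded-overlap constants disappear in the limit. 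Once these two reductions are justified, both bounds read $1+o(1)$ and the proposition follows. I would also remark that $\Lambda_h(K(\gamma))<\infty$ together with $\Lambda_h>0$ re-confirms that $K(\gamma)$ is an $h$-set, consistent with the claim that it is dimensional.
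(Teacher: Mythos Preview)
Your upper bound is essentially the paper's argument. For the lower bound you take a genuinely different route: the paper argues directly by counting, while you propose a Frostman mass-distribution argument with the natural Cantor measure $\nu(I_{j,s})=2^{-s}$. Both work and rest on the same two facts, the gap estimate \eqref{hh} and $\gamma_k\to 0$. The paper's version: for each covering interval $G_i$ with $\delta_{k+1}<|G_i|\le\delta_k$, after shrinking $G_i$ if it meets only one level-$(k+1)$ interval, either $|G_i|\le C_0\delta_{k+1}$ (meets one interval, $h(|G_i|)>2^{-k-1}$) or $|G_i|>(1-4\gamma_{k+1})\delta_k$ (meets at most two, $h(|G_i|)>2^{-k}(1-\varepsilon)$); then count level-$m$ subintervals for large $m$ to obtain $\sum_i h(|G_i|)>1-\varepsilon$. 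Your Frostman approach is the more standard device in geometric measure theory and is arguably cleaner conceptually; the paper's counting is more hands-on but avoids constructing the measure.

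One point in your write-up should be tightened. You say $\nu(G)\lesssim 2^{-\eta(t)+O(1)}$ and that ``the $O(1)$ is absorbed into the $(1+o(1))$''. As stated this is misleading: a fixed constant $C>1$ in $\nu(G)\le C\,h(|G|)$ would only yield $\Lambda_h\ge 1/C$, not $\Lambda_h\ge 1$. What actually makes the constant sharp is the same dichotomy the paper uses. For $\delta_{s+1}\le t<\delta_s$: if $t<(1-4\gamma_{s+1})\delta_s$ then $G$ cannot bridge the gap $h_{j,s}>(1-4\gamma_{s+1})\delta_s$, so it meets at most one level-$(s+1)$ interval and $\nu(G)\le 2^{-s-1}\le h(t)$; if $t\ge(1-4\gamma_{s+1})\delta_s$ then $\nu(G)\le 2\cdot 2^{-s-1}=2^{-s}$, while $\eta(t)\le s-\log(1-4\gamma_{s+1})/\log(1/\gamma_{s+1})=s+o(1)$ gives $h(t)\ge 2^{-s}(1-o(1))$. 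Only in this second branch does $\gamma_{s+1}\to 0$ enter, and it is what brings the ratio down to $1+o(1)$ rather than $2$. Once you make this case split explicit, your argument is complete.
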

\begin{proof} Take $t=C_0 \,\delta_k,$ where $C_0$ is given in \eqref{delta}. Then
$\delta_k <t=C_0\,\gamma_k \,\delta_{k-1} <\delta_{k-1}$ for large enough $k$. Here,
$\eta(t)=k-\log C_0 /\log (1/\gamma_k)$ and $h(t)=2^{-k} \,a_k$ with
$a_k:=\exp \frac{\log C_0 \cdot \log 2}{\log (1/\gamma_k)}.$ Since $\gamma_k \to 0,$ given
$\varepsilon>0,$ there is $k_0$ such that $a_k<1+\varepsilon$ for $k\geq k_0.$ From \eqref{delta}
it follows that $1=2^k\,h(\delta_k)< \sum_{j=1}^{2^k} h(l_{j,k})< 2^k\,h(t)< 1+\varepsilon$
provided that $k\geq k_0.$ Of course, $\Lambda_h(K(\gamma))\leq \sum_{j=1}^{2^k} h(l_{j,k})$ for each $k.$
Since $\varepsilon$ is arbitrary, we get $\Lambda_h(K(\gamma))\leq 1.$

Let us show that $\Lambda_h(K(\gamma))\geq 1.$ Fix $\varepsilon>0$ and choose $k_0$ such that
 \begin{equation}\label{eps}
\varepsilon\,\log 1/\gamma_k >- \log 2 \cdot \log(1-4\gamma_k)\,\,\,\mbox{for}\,\,\,k\geq k_0.
\end{equation}
This can be done as $\gamma_k \to 0.$ Take any open covering $\cup G_i$ of $K(\gamma).$
Given $\varepsilon,$ we can consider only coverings
with $ |G_i|< \delta_{k_0}$ for each $i.$ We choose a finite subcover $\cup_{i=1}^N G_i$ of $K(\gamma).$

Fix $i\leq N$ and $k$ with $ \delta_{k+1}<|G_i|\leq \delta_{k}.$ Recall that, for each $j\leq 2^k,$ we have
$h_{j,k}> (1-4 \gamma_{k+1})l_{j,k}.$ Therefore the distance between any two basic intervals from
$E_{k+1}$ exceeds $(1-4 \gamma_{k+1})\delta_{k}.$ If $ |G_i|<(1-4 \gamma_{k+1})\delta_{k}$ then
  $G_i$ can intersect at most one interval from $E_{k+1}$. In this case we can consider only
 $ |G_i|\leq \max_{1\leq j \leq 2^{j+1}}l_{j,k+1}\leq C_0 \delta_{k+1},$ by \eqref{delta}.
 Thus there are two possibilities: $ \delta_{k+1}<|G_i|\leq C_0 \delta_{k+1}$ or  $(1-4 \gamma_{k+1})\delta_{k}< |G_i|\leq \delta_{k}.$

In the first case we have $ h(|G_i|)>2^{-k-1}.$ Here, $G_i$ intersects at most one interval from $E_{k+1}$ and, by construction, at most $2^{m-k-1}$ interval from $E_{m}$ for $m>k.$
In turn, in the latter case, $ h(|G_i|)>2^{-k}(1- \varepsilon).$ Indeed, here,
$\eta(|G_i|) < k-\log(1-4 \gamma_{k+1}) /\log (1/\gamma_{k+1})$ and $ h(|G_i|)>2^{-k}\,a,$
where $a=\exp \frac{\log(1-4 \gamma_{k+1}) \cdot \log 2}{\log (1/\gamma_{k+1})}> (1- \varepsilon),$ by \eqref{eps}.
Now $G_i$ intersects at most two interval from $E_{k+1}$ and at most $2^{m-k}$ interval from $E_{m}.$

Let us choose $m$ so large that each basic interval from $E_{m}$ belongs to some $G_i,$ perhaps not to unique.
We decompose all intervals from $E_{m}$ into two groups corresponding to the cases considered above.
 Counting intervals gives $2^m \leq \sum_i'2^{m-k-1} +  \sum_i''2^{m-k}<2^m [\sum_i'h(|G_i|)+
 \sum_i''h(|G_i|)(1- \varepsilon)^{-1}].$ From this we see that $\sum_i h(|G_i|) > 1- \varepsilon,$ which
 is the desired conclusion, as $\varepsilon$ and $(G_i)$ here are arbitrary.
\end{proof}

The same reasoning applies to a part of $K(\gamma)$ on each basic interval.
\begin{corollary}
Let $\gamma$ and $h$ be as in Proposition above, $k \in \Bbb N, 1\leq j\leq 2^k.$ Then
$ \Lambda_h(K(\gamma)\cap I_{j,k})=2^{-k}$.
\end{corollary}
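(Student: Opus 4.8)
The plan is to reduce the corollary to the Proposition (and to the structure of its proof) by exploiting the self-similar combinatorics of $K(\gamma)$.

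First I would fix $k$ and $j$ and observe that $K(\gamma)\cap I_{j,k}$ is, up to an affine change of variable that maps $I_{j,k}$ onto $[0,1]$, a weakly equilibrium Cantor-type set of the same kind, but with the shifted parameter sequence $(\gamma_{k+1},\gamma_{k+2},\dots)$; the corresponding products of lengths starting from level $k$ are $\delta_{k+m}/\delta_k$ for $m\ge 0$. Since the dimension function $h$ is built from the global sequence $(\delta_m)$, it is not literally the dimension function of the rescaled set, so rather than invoking the Proposition verbatim I would repeat its argument, now localized to $I_{j,k}$. The upper bound is immediate: for each $m\ge k$ the $2^{m-k}$ basic intervals of level $m$ contained in $I_{j,k}$ cover $K(\gamma)\cap I_{j,k}$, and taking $t=C_0\,\delta_m$ exactly as in the Proposition gives $\Lambda_h(K(\gamma)\cap I_{j,k})\le \sum 2^{m-k} h(l_{i,m}) < 2^{-k}(1+\varepsilon)$ for $m$ large, whence $\le 2^{-k}$.

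For the lower bound I would run the counting argument of the Proposition inside $I_{j,k}$. Take an arbitrary finite cover $\cup_{i=1}^N G_i$ of $K(\gamma)\cap I_{j,k}$ with $|G_i|<\delta_{k_0}$ where $k_0\ge k$ is chosen so that \eqref{eps} holds; each $G_i$ falls into one of the two regimes as before (either $\delta_{\ell+1}<|G_i|\le C_0\delta_{\ell+1}$ with $h(|G_i|)>2^{-\ell-1}$, or $(1-4\gamma_{\ell+1})\delta_\ell<|G_i|\le\delta_\ell$ with $h(|G_i|)>2^{-\ell}(1-\varepsilon)$, for the appropriate $\ell\ge k$), and in either case $G_i$ meets at most $2^{m-\ell-1}$ resp. $2^{m-\ell}$ basic intervals of level $m>\ell$. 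Choosing $m$ so large that every level-$m$ basic interval inside $I_{j,k}$ lies in some $G_i$, and noting there are exactly $2^{m-k}$ such intervals, the same inequality $2^{m-k}\le 2^{m-k}[\sum_i' h(|G_i|)+\sum_i'' h(|G_i|)(1-\varepsilon)^{-1}]$ yields $\sum_i h(|G_i|)>(1-\varepsilon)\,2^{-k}$. Letting $\varepsilon\to 0$ gives $\Lambda_h(K(\gamma)\cap I_{j,k})\ge 2^{-k}$, completing the proof.

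The only point needing a little care — and the one I would flag as the main obstacle — is the bookkeeping that a covering interval $G_i$ with $|G_i|<\delta_{k_0}$ and $k_0\ge k$ cannot straddle two distinct basic intervals of level $k$, so that working within $I_{j,k}$ really is legitimate and the level-$m$ intervals it meets are all contained in $I_{j,k}$; this follows from \eqref{hh}, since the gap $h_{j',k'}$ between adjacent level-$(k'+1)$ intervals exceeds $(7/8)\delta_{k'}$, which dominates $|G_i|$ for all relevant scales. Everything else is a verbatim transcription of the Proposition's argument with the index $0$ replaced by $k$ and the total mass $1=2^0$ replaced by $2^{-k}$.
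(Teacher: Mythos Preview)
Your proposal is correct and is exactly what the paper does: it merely states that ``the same reasoning applies to a part of $K(\gamma)$ on each basic interval,'' and you have carried that out carefully, including the legitimate extra bookkeeping that $|G_i|<\delta_{k_0}\le\delta_k$ forces each $G_i$ to lie within a single level-$k$ interval. One small slip: in your counting inequality the factor on the right-hand side should be $2^{m}$, not $2^{m-k}$ (since $2^{m-\ell-1}<2^{m}h(|G_i|)$, etc.), and it is precisely this mismatch with the left-hand side $2^{m-k}$ that yields $\sum_i h(|G_i|)>(1-\varepsilon)\,2^{-k}$ rather than $>1-\varepsilon$.
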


Suppose, in addition, that $K(\gamma)$ is not polar. Then, by Corollary 3.2 in \cite{jat},
$\mu_{K(\gamma)}(I_{j,k})= 2^{-k}$, so the values of $\mu_{K(\gamma)}$ and the
restriction of $\Lambda_h$ on $K(\gamma)$ coincide on each basic interval. From here, by Lemma 3.3 in \cite{jat},
these measures are equal on  $K(\gamma)$.
Thus, a non-polar set $K(\gamma)$ satisfying  \eqref{ll} is indeed {\it equilibrium} Cantor-type set
if we accept for definition of this concept the condition $\mu_{K}=\Lambda_h\vert_{K(\gamma)},$
which is  more natural than the definition suggested in in [WE], Section 6.\\

We recall that there is no complete characterization of polarity of compact sets in terms of
Hausdorff measures, see e.g Chapter V in \cite{nev}. On the one hand, a set is polar if its logarithmic measure is finite.
This defines a zone $Z_{pol}$ in the scale of growth rate of dimension functions consisting of
$h$ with $\liminf_{t \to 0} h(t)/h_0(t) >0.$ If $h\in Z_{pol}$ and  $\Lambda_h(K) <\infty$ then $Cap(K)=0.$
On the other hand, functions with $\int_0 h(t)/t\,\, dt <\infty$ form a nonpolar zone $Z_{np}:$ if
 $h\in Z_{np}$ and  $\Lambda_h(K) >0$ then $Cap(K)>0.$ But, by Ursell \cite{ursel}, the remainder makes up
a zone  $Z_u$ of uncertainty. One can take two functions in this zone with $h_2 \prec h_1$ and
sets $K_1, K_2$, where $K_j$ is a $h_j-$set, such that $K_2$ is polar, $K_1$ is not, though
in the sense of Hausdorff measure the set $K_2$ is larger than $K_1$. Indeed, $\Lambda_{h_2}(K_2)>0,$ but
$\Lambda_{h_2}(K_1)=0$ or $\Lambda_{h_1}(K_2)=\infty,$ but $\Lambda_{h_1}(K_1)<\infty.$\\

Let us show that a similar circumstance is valid with the extension property.

\begin{proposition}
There are two dimension functions $h_2 \prec h_1$ and two sets $K_1, K_2$, where $K_j$ is an $h_j-$set
for $j\in \{1,2\}$, such that the smaller set $K_1$ has the extension property, whereas the larger set $K_2$
does not have.
\end{proposition}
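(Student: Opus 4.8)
\emph{Strategy.} The plan is to realize both sets as weakly equilibrium Cantor sets, $K_1=K(\gamma^{(1)})$ and $K_2=K(\gamma^{(2)})$, where $\gamma^{(1)}$ is chosen so that the attached sequence $(B_k)$ is regular of subexponential growth (hence $K_1$ has $EP$ by Theorem~\ref{sg} and the characterization established above), and $\gamma^{(2)}$ is of the irregular form of Example~2 (hence $K_2$ fails $EP$). By the Proposition establishing $\Lambda_h(K(\gamma))=1$, every $K(\gamma)$ obeying \eqref{ll} is an $h$-set for the dimension function $h(t)=2^{-\eta(t)}$ built from its own sequence $(\delta_k)$; thus $K_j$ is automatically an $h_j$-set and nothing more need be checked on that score. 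Moreover, once $h_2\prec h_1$ is known, the ``$K_2$ is larger'' clause is free: given $\varepsilon>0$ we have $h_2(t)\le\varepsilon\,h_1(t)$ for small $t$, whence $\Lambda_{h_2}^{\delta}(K_1)\le\varepsilon\,\Lambda_{h_1}^{\delta}(K_1)\le\varepsilon\,\Lambda_{h_1}(K_1)=\varepsilon$ for small $\delta$, so $\Lambda_{h_2}(K_1)=0$ while $\Lambda_{h_2}(K_2)=1$ (equivalently $\Lambda_{h_1}(K_2)=\infty$, $\Lambda_{h_1}(K_1)<\infty$).

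\emph{The two sets.} For $K_1$ I would take $\delta_k^{(1)}=\exp(-k\,2^{k+1})$, i.e. $B_k^{(1)}=k$, so that $\gamma_k^{(1)}=\exp(-(k+1)2^{k})$ obeys \eqref{ll}; since $(B_k^{(1)})$ is regular with $\beta_k=(\log k)/k\to 0$, Theorem~\ref{sg} gives \eqref{Y}, and $K_1$ has $EP$. For $K_2$ I would use Example~2 with $k_j=j^2$: then $k_{j+1}-k_j=2j+1\nearrow\infty$ and \eqref{irr} holds because $k_{j+1}^2=(j+1)^4=o(2^{j^2})=o(A_j)$, so by Example~2 the condition \eqref{Not} holds and $K_2$ lacks $EP$. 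Writing $t=e^{-x}$ and $L_k^{(i)}=\log(1/\delta_k^{(i)})$, we have $h_i(e^{-x})=2^{-\eta_i(e^{-x})}$ where $x\mapsto\eta_i(e^{-x})$ is increasing, piecewise linear, and equals $k$ at $x=L_k^{(i)}$; hence $h_2\prec h_1$ is precisely $\eta_2(e^{-x})-\eta_1(e^{-x})\to\infty$ as $x\to\infty$. Inverting $L_k^{(1)}=k\,2^{k+1}$ gives $\eta_1(e^{-x})=\log_2 x-\log_2\log_2 x+O(1)$; the essential feature is that $\eta_1(e^{-x})$ lies a \emph{growing} amount below $\log_2 x$.

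\emph{The key comparison.} For $k_j\le k<k_{j+1}$ one has $L_k^{(2)}=2\log[(k+5)!/5!]+A_j$ with $A_j=2^{k_j}$, and $2\log[(k_{j+1}+5)!/5!]=o(2^{k_j})$ by \eqref{irr}; hence the $\delta_k^{(2)}$ fall into ``clusters'' of nearly equal size $e^{-2^{k_j}(1+o(1))}$, $k\in[k_j,k_{j+1})$, separated by a single large drop to size $e^{-2^{k_{j+1}}(1+o(1))}$ at $k=k_{j+1}$. Consequently $\eta_2(e^{-x})\ge k_j$ on the short ``ramp'' $x\in[L_{k_j}^{(2)},L_{k_{j+1}-1}^{(2)}]$, which sits inside $x=2^{k_j}(1+o(1))$, while $\eta_2(e^{-x})\ge k_{j+1}-1$ on the long ``flat'' $x\in[L_{k_{j+1}-1}^{(2)},L_{k_{j+1}}^{(2)}]$, where $x$ sweeps $[2^{k_j}(1+o(1)),2^{k_{j+1}}(1+o(1))]$. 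These intervals tile a neighbourhood of $+\infty$, and since $\eta_1$ is increasing, on the ramp $\eta_2-\eta_1\ge k_j-\eta_1(e^{-L_{k_{j+1}-1}^{(2)}})=\log_2 k_j-O(1)$, and on the flat $\eta_2-\eta_1\ge(k_{j+1}-1)-\eta_1(e^{-L_{k_{j+1}}^{(2)}})=\log_2 k_{j+1}-O(1)$; both tend to $\infty$ with $j$. Therefore $\eta_2(e^{-x})-\eta_1(e^{-x})\to\infty$, i.e. $h_2\prec h_1$, and the proposition follows.

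\emph{Main obstacle.} The delicate point is exactly this last comparison. A set without $EP$ tends to be dimensionally small, so forcing the \emph{larger} set to be the bad one pushes us into the irregular construction, where $\eta_2$ has long flat stretches; the hazard is that $\eta_1\approx\log_2 x$ — essentially the slowest growth an $EP$-set permits — catches up with $\eta_2$ at the right end of each flat stretch and destroys $h_2\prec h_1$. Letting $B_k^{(1)}$ diverge, but only subexponentially so that $EP$ survives, is precisely what produces the growing gap; any such divergence works, which is why $B_k^{(1)}=k$ is admissible. The remaining checks — that $\gamma^{(1)},\gamma^{(2)}$ satisfy \eqref{ll} and tend to $0$ (needed to apply the Section~8 Proposition), and that the ramp/flat intervals really exhaust $(0,t_0)$ — are routine.
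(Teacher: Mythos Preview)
Your argument is correct and follows the same architecture as the paper's proof --- pick $K_1=K(\gamma^{(1)})$ with $EP$ from a regular sequence, pick $K_2=K(\gamma^{(2)})$ without $EP$ from the irregular construction of Example~2, invoke the Proposition giving $\Lambda_{h_j}(K_j)=1$, and prove $h_2\prec h_1$ by showing $\eta_2-\eta_1\to\infty$ --- but the specific choices differ. The paper takes $K_1$ from Example~1 with \emph{constant} $B_k\equiv B$, so that $h_1\approx h_0$ and the comparison reduces to $\eta_2-\eta_0\to\infty$; to make that work it \emph{modifies} Example~2, shrinking the jumps to $A_j=2^{k_j-j}$, which yields $\eta_0(\delta_k)<k_j+1-j$ and hence $\eta_2-\eta_0>j-2$. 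You instead keep Example~2 verbatim (with $A_j=2^{k_j}$, $k_j=j^2$) and create the gap on the $K_1$ side by letting $B_k^{(1)}=k$ grow: this pushes $\eta_1(e^{-x})$ below $\log_2 x$ by an extra $\log_2\log_2 x$, which is exactly what your ramp/flat estimate exploits to get $\eta_2-\eta_1\ge \log_2 k_j -O(1)$. The paper's route is marginally cleaner in that it compares to the explicit $\eta_0(t)=\log_2\log(1/t)$ rather than inverting $k\mapsto k\,2^{k+1}$; your route has the virtue of using Example~2 unchanged and of illustrating that any subexponentially divergent $B_k^{(1)}$ would do.
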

\begin{proof}
Take $K_1$ from Example 1. Let us show that the corresponding function $h_1=2^{-\eta_1}$ is equivalent to $h_0.$
It is enough to find $C>0$ such that $\eta_0(t)-C \leq \eta_1(t) \leq \eta_0(t)+C$ for small $t.$ Here,
$ \eta_0(t)=(\log \log 1/t) /\log 2,$ so $h_0(t)=2^{-\eta_0(t)}.$ For the set $K_1$ we have
$\delta_k=\exp(-2^{k+1} B)$ and  $\eta_0(\delta_k)=k+\log 2B/\log 2.$ If $\delta_{k+1}<t\leq \delta_k$
for some $k,$ then $k\leq \eta_1(t)<k+1$ and $k+ \log 2B/\log 2\leq \eta_0(t)<k+1+\log 2B/\log 2,$ which
gives $h_1 \approx h_0.$

In turn, let $K_2$ be as in Example 2 with  $A_j=2^{k_j}\,2^{-j}$ and $\varepsilon_j=\exp(-A_j+A_{j+1})$
for $j\geq 2.$ Here we suppose that $(k_j)_{j=1}^{\infty}$ satisfies $2^{-k_j}2^j\,k_{j+1}^2\to 0$
as $j\to \infty.$ Then \eqref{irr} and \eqref{sum} are valid, which, as in Example 2, gives the lack
of the extension property for  $K_2$. Let us show that $h_2 \prec h_0.$ It is enough to check that
 $\eta_2(t)-\eta_0(t)\to \infty$ as $t\to 0.$  Let $\delta_{k} < t \leq \delta_{k-1}$ with $k_j\leq k < k_{j+1}$
 for large enough $j.$ Then $ \log 1/\delta_k = 2\log((k+5)!/5!) +A_j<2\,A_j$ and
 $\eta_0(t) < \eta_0(\delta_{k})<k_j+1-j.$ On the other hand, $\eta_2(t)\geq \eta_2(\delta_{k-1})= k-1 \geq k_j-1.$ Therefore, $\eta_2(t)-\eta_0(t)>j-2,$ which completes the proof. \end{proof}

One can suppose that, for the considered family of sets, the scale of growth rate of dimension functions
can be decomposed as above into three zones. If  $K(\gamma)$ is an $h-$set for a function $h$ with moderate
growth then the set has $EP$. If the corresponding function $h$ is large enough, then $EP$ fails. Proposition above
shows that the zone of uncertainty here is not empty.\\

We see that $h=h_0$ is not the largest function which allows $EP$ for $h-$sets $K(\gamma)$.
If, as in the regular case, we take $B_k \nearrow \infty $ of subexponential growth, then
$\delta_{k}=\exp(-2^{k+1} B_k)$ and $h_0(\delta_{k})=2^{-k-1} B_k^{-1},$ which is essentially
smaller than $h(\delta_{k})=2^{-k}$ for the corresponding function $h.$\\

{\bf Example 3}. Let $\log_{(m)} t$ denote the $m-$th iteration $\log \cdots \log t$ for large enough $t$.
The sequence $B_k=\exp(k/\log_{(m)} k)$ has subexponential growth. Then the corresponding
sequence $(\gamma_k)_{k=1}^{\infty}$ satisfies \eqref{ll}, as for large $k$ we have
$\gamma_k=\delta_{k}/\delta_{k-1}< \exp(-2^k B_k)< \exp(-2^k)$ and for the previous $k$ we can take $\gamma_k=1/32$.
By Theorem  \ref{sg}, the set $K(\gamma)$ has $EP$. Let us find a dimension function $h$ that
corresponds to this set. We will search it in the form $h(t)= h_0^{\alpha(t)}(t).$ Let $t=\delta_{k}.$
Then $\log 1/t=2^{k+1} B_k,$ so $k \sim (\log\log 1/t) /\log 2.$ On the other hand,
$h(t)=2^{-k}=(2^{k+1} B_k)^{-\alpha(t)},$ which gives $\alpha(t) \sim 1-(\log 2 \cdot \log_{(m)}k)^{-1}
\sim 1-(\log 2 \cdot \log_{(m+2)}1/t)^{-1}.$ Clearly, $h\succ h_0.$ \\

The next Proposition generalizes Example 3. We restrict our attention to strictly increasing functions $h$
of the form $h= h_0^{\alpha},$ where $\alpha$ is a monotone function on $[0,t_0]$.
Since in the next section we shall be interested in considering of dimension functions exceeding $h_0,$
let us suppose that $\alpha(t)\leq 1.$ Then $h \succ t^{\sigma}$ for each fixed $\sigma>0.$

 In addition we assume that asymptotically
\begin{equation}\label{hhh}
h(t)\leq 2\,h(t^2),
\end{equation}
which is valid for typical dimension functions corresponding to the cases\\
$a) \,\, \alpha(t)=\alpha_0\in(0,1],$\\
$b)\,\, \alpha(t)=\alpha_0+ \varepsilon(t)$ with $\alpha_0\in[0,1),$\\
$c)\,\, \alpha(t)=1-\varepsilon(t).$

Here,
\begin{equation}\label{e}
\varepsilon(t)\searrow 0 \,\,\,\,\,\mbox{with}\,\,\,\,\,
\varepsilon(t)\,\log\log 1/t \nearrow \infty \,\,\,\mbox{as}\,\,\,t\searrow 0,
\end{equation}
 since for slowly increasing $\varepsilon$ we get $h^{\alpha_0\pm \varepsilon} \approx h_0^{\alpha_0}.$

 By  \eqref{hhh}, for the inverse function $h^{-1},$ we have $h^{-1}(\tau)\leq (h^{-1}(2\tau))^2$
and $h^{-1}\prec \tau^M$ for $M$ given beforehand. From this,
$\gamma_k =h^{-1}(2^{-k})/ h^{-1}(2^{-k+1})$ defines a sequence satisfying \eqref{ll}.
We denote the corresponding set by $K^{\alpha}(\gamma).$ Our aim is to check $EP$ for this set
provided regularity of the sequence $B_k=2^{-k-1}\log(1/h^{-1}(2^{-k})).$ We see at once that
$B_k$ increases. In its turn, $\beta_k \searrow 0$ if $\alpha_0=1$ in the case $(a)$,
$\beta_k \nearrow 1/\alpha_0 -1$ in $(b)$ and in $(a)$ with $\alpha_0<1.$  Concerning $(c)$,
the monotonicity of $\beta_k$ requires additional rather technical restrictions on $\varepsilon$.
At least for $\varepsilon(t) =\varepsilon_m(t):=(\log_{(m)}1/t)^{-1}$ we have
$\beta_k \searrow 0.$ Here, $m\geq 3,$ as $h\approx h_0$ for $m\in \{1,2\}$. 

\begin{proposition}
Let $K^{\alpha}(\gamma)$ be defined by a function $h$, as above, with a regular sequence $(B_k)_{k=1}^{\infty}.$
Then $K^{\alpha}(\gamma)$ has the extension property if and only if
$$\left(\log\frac{1}{h^{-1}(2^{-k})}\right)^{1/k} \to 2 \,\,\,\mbox{as} \,\,k\to \infty.$$
\end{proposition}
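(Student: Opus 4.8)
The plan is to reduce this Proposition to Theorem~\ref{sg} by translating the stated limit condition into the subexponential growth of $(B_k)_{k=1}^{\infty}$. Recall that $B_k = 2^{-k-1}\log\frac{1}{h^{-1}(2^{-k})}$ and $\beta_k = (\log B_k)/k$. By Theorem~\ref{sg}, since $(B_k)$ is regular with positive $\beta_k$ in the generic cases $(a)$ with $\alpha_0<1$, $(b)$, and (after the technical restrictions) the non-trivial subcase of $(c)$, the set $K^{\alpha}(\gamma)$ has $EP$ if and only if $(B_k)$ has subexponential growth, i.e. $\beta_k \to 0$. (In the remaining cases, namely $(a)$ with $\alpha_0=1$ and $(c)$ with $\varepsilon=\varepsilon_m$, we already noted $\beta_k\searrow 0$, so $EP$ always holds there; one checks separately that the displayed limit also always holds in those cases, so the equivalence is vacuously true.) So the whole content is the elementary claim:
$$ \beta_k \to 0 \quad\Longleftrightarrow\quad \left(\log\tfrac{1}{h^{-1}(2^{-k})}\right)^{1/k} \to 2. $$

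First I would write $L_k := \log\frac{1}{h^{-1}(2^{-k})}$, so that $B_k = 2^{-k-1} L_k$ and hence
$$ \beta_k = \frac{\log B_k}{k} = \frac{-(k+1)\log 2 + \log L_k}{k} = -\log 2 - \frac{\log 2}{k} + \frac{\log L_k}{k}. $$
Therefore $\beta_k \to 0$ is equivalent to $\frac{\log L_k}{k} \to \log 2$, which is exactly $L_k^{1/k}\to 2$. This is a one-line computation once the substitution is made; the factor $2^{-k-1}$ in the definition of $B_k$ is precisely what produces the limit value $2$ (rather than, say, $1$) in the statement.

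The only genuine subtlety — and the step I would treat as the main (minor) obstacle — is bookkeeping the three cases $(a)$, $(b)$, $(c)$ so that the appeal to Theorem~\ref{sg} is legitimate, since that theorem is stated for regular sequences with \emph{positive} $\beta_k$. One must observe: in case $(a)$ with $\alpha_0 < 1$ and in case $(b)$, $\beta_k \nearrow 1/\alpha_0 - 1 > 0$ (as already recorded before the Proposition), so $\beta_k > 0$ eventually, Theorem~\ref{sg} applies directly, and subexponential growth fails — consistently with $L_k^{1/k} = \exp(\beta_k + \log 2 + o(1)) \to 2\cdot\exp(1/\alpha_0-1) \neq 2$. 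In case $(a)$ with $\alpha_0 = 1$, and in case $(c)$ with $\varepsilon=\varepsilon_m$ ($m\geq 3$), we have $\beta_k \searrow 0$, so $(B_k)$ has subexponential growth, $K^{\alpha}(\gamma)$ has $EP$ by the monotone-convergent-$\beta_k$ discussion (or Theorem~\ref{sg} extended to the boundary case $\beta_k\to 0$, which is the ``only if'' direction there), and indeed $L_k^{1/k} = \exp(\beta_k+\log 2 + o(1))\to 2$, so the equivalence holds. Since $\alpha(t)\le 1$ throughout, $\beta_k$ is always $\le 0$ in the limit is impossible — rather $\beta_k \ge -\log 2 + o(1)$ and the relevant range is $\beta_k \to \ell$ with $\ell \ge 0$; collecting these observations shows the stated limit condition $L_k^{1/k}\to 2$ is in every admissible case equivalent to $\ell = 0$, i.e. to subexponential growth, i.e. to $EP$. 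I would close by remarking that the hypothesis $\alpha$ monotone (hence $(B_k)$, and one checks $(\beta_k)$, eventually monotone) is exactly what is needed to invoke the ``regular'' framework of Theorem~\ref{sg}.
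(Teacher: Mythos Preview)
Your approach is correct and actually cleaner than the paper's. The paper proceeds case by case: it explicitly inverts $h$ in each of the regimes $(a)$, $(b)$, $(c)$, writing $h^{-1}(\tau)=\exp[-(1/\tau)^{1+\delta(\tau)}]$ in case $(c)$, $h^{-1}(\tau)=\exp[-(1/\tau)^{1/\alpha_0-\delta(\tau)}]$ in $(a)$ and $(b)$ with $\alpha_0>0$, and $h^{-1}(\tau)=\exp[-(1/\tau)^{\Delta(\tau)}]$ with $\Delta\nearrow\infty$ when $\alpha_0=0$, and then reads off both the limit of $L_k^{1/k}$ and the behaviour of $\beta_k$ separately in each case before invoking Theorem~\ref{sg}. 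You instead observe once and for all that $B_k=2^{-k-1}L_k$ forces $\beta_k=-\log 2-\frac{\log 2}{k}+\frac{\log L_k}{k}$, so $\beta_k\to 0$ and $L_k^{1/k}\to 2$ are tautologically the same statement; the case split is then only needed to check the regularity hypotheses of Theorem~\ref{sg}, not to carry out the main equivalence. This buys you a uniform argument that would survive enlarging the class of admissible $\alpha$, whereas the paper's computation yields the explicit inverse functions as a by-product (useful in itself, but not needed for the Proposition). Two small cleanups: your line ``$\beta_k \ge -\log 2 + o(1)$'' should read $\beta_k\ge -(\log 2)/k$ (from $B_k\ge 1/2$, which follows since $\alpha\le 1$ gives $h\ge h_0$ and hence $\delta_k\le e^{-2^k}$); and strictly speaking the passage from ``$(B_k)$ has subexponential growth'' to ``$K^{\alpha}(\gamma)$ has $EP$'' uses both Theorem~\ref{sg} and the main Theorem of Section~5, though the paper itself elides this in the same way.
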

\begin{proof}
 Let us find $h^{-1}$ for the case $\alpha(t)=1-\varepsilon(t).$ If $h(t)=\tau$ then
 $[1-\varepsilon(t)]\log\log 1/t=\log 1/\tau.$ Let us define a function $\delta$ by
the condition $\log\log 1/t=[1+\delta(\tau)]\log 1/\tau.$ Then  $[1-\varepsilon(t)][1+\delta(\tau)]=1,$
so $\delta(\tau)\searrow 0$ as $\tau\searrow 0.$ Then $t=h^{-1}(\tau)=\exp[-(1/\tau)^{1+\delta(\tau)}]$
and $\log(1 /h^{-1}(2^{-k}))=2^{k(1+\delta(2^{-k}))}.$ The $k-$th root of this expression tends to 2.
On the other hand, $(B_k)_{k=1}^{\infty}$ here has subexponential growth as
$\beta_k= (\delta(2^{-k})-1/k) \log 2 \to 0.$ By Theorem \ref{sg}, $K^{\alpha}(\gamma)$ has $EP.$

Similarly, if $\alpha(t)=\alpha_0+ \varepsilon(t)$ with $0<\alpha_0<1$ then
$h^{-1}(\tau)=\exp[-(1/\tau)^{1/\alpha_0-\delta(\tau)}].$ Here,
$(\log(1 /h^{-1}(2^{-k})))^{1/k} =2^{(1/\alpha_0-\delta(2^{-k}))}  \nrightarrow 2$
and $\beta_k  \nrightarrow 0,$ there is no $EP.$  In the case $(a),$ the function $\delta$ vanishes.

Lastly, $\alpha_0=0$ in $(b)$ gives $h^{-1}(\tau)=\exp[-(1/\tau)^{\Delta(\tau)}]$
with $\Delta(\tau)\nearrow \infty$ as $\tau\searrow 0.$ Here,
$(\log(1 /h^{-1}(2^{-k})))^{1/k} \to \infty$ and $ \beta_k \to \infty.$
\end{proof}

\section{Extension Property and densities of Hausdorff contents}

To decide whether a set $K$ has $EP,$ we have to consider a local structure of the most rarefied parts of $K.$
Obviously, such global characteristics as
Hausdorff measures or  Hausdorff contents cannot be applied in general for this aim.
Instead, one can suggest to describe $EP$ in terms of lower densities of $M_h$ or related functions.
Given a dimension function $h,$ a compact set $K,\,x\in K$ and $r>0,$  let
$\varphi_{h,K}(x,r):=M_h(K\cap B(x,r))$ and
$ \, \varphi_{h,K}(r):=\inf_{x\in K}\varphi_{h,K}(x,r),$
where $B(x,r)=[x-r,x+r].$ One can suppose that $K$ has  $EP$ if and only if the corresponding
function $ \varphi_{h,K}$ is not very small, in a sense, as $r \to 0.$ Essentially, this is similar
to analysis of the lower  density of the Hausdorff content, which can be defined as
$\phi_h(K):=\liminf_{r\to 0} \inf_{x\in K} \frac {M_h(K\cap B(x,r))}{M_h(B(x,r))}.$ Indeed, $M_h(B(x,r))=h(2r)$
for  $h$ with $h(t) \succ t$ and the expression above is
$\liminf_{r\to 0} \frac {\varphi_{h,K}(r)}{h(2r)}.$

In order to distinguish $EP$ by means of $\phi_h$, we have to consider
large enough dimension functions $h.$ Indeed, if for some $h_1$ with $h_1 \succ h$ there exists $h_1-$set $K_1$
with $EP$, then $h$ cannot be used for this aim, because $\Lambda_h(K_1)=0$ implies  $M_h(K_1)=0$ and the corresponding density vanishes contrary to our expectations. Therefore, we can consider only functions
exceeding $h_0.$

We remark that $\Lambda_h-$analogs of $\varphi_{h,K}$ or  $\phi_h$ cannot be applied in general for
distinguishing $EP,$ since for fat sets ($K=\overline{Int(K)}$) we have $\Lambda_h(K\cap B(x,r))=\infty$
provided $h(t) \succ t.$\\

Interestingly, it turns out that the lower  density $\phi_h$ can be used to characterize $EP$
for the family of compact sets considered in \cite{Mich}. \\

 {\bf Example 4}. Given two sequences $b_k\searrow 0$ (for brevity,
we take $b_k=e^{-k}$) and $Q_k \nearrow$ with $Q_k \geq 2,$ let $K=\{0\}\cup \bigcup_{k=1}^{\infty} I_k,$ where
$I_k=[a_k,b_k], |I_k|=b_k^{Q_k}.$ In what follows we will consider two cases: $ Q_k\leq Q$ with some $Q$ and
$Q_k\nearrow \infty$ with $Q_k<\log k$ for large $k$. By Theorem 4 in \cite{Mich}, $K$ has the extension property
in the first case and does not have it for unbounded $(Q_k)$.\\

In the next lemma we consider concave dimension functions $h= h_0^{\alpha}$ for the cases $(a), (b),$ as above,
and for more general\\
$c\,')\,\, \alpha(t)=\alpha_0-\varepsilon(t)$ with $\alpha_0\in(0,1].$\\
 We suppose now that $\varepsilon$ is a monotone differentiable function on $[0,t_0]$ with $0<\varepsilon(t)<1-\alpha_0$ in  $(b)$ and  $0<\varepsilon(t)<\alpha_0/2$ in  $(c\,').$
 As before, we assume \eqref{e}. A direct computation shows that
\begin{equation}\label{deriv}
h'(t)< h(t)\,h_0(t)\,\alpha(t)/t\,\,\,\mbox{for the cases}\,(a), (b)\,\,\mbox{and}\,\,\,\,h'(t)< h(t)\,h_0(t)/t\,\,\,\mbox{for}\,\, (c\,').
\end{equation}

\begin{lemma} \label{M} Suppose intervals $I_k$ are given as in Example 4 and $n$ is large enough.
Then $M_h(\cup_{k=n}^{\infty}I_k)=h(b_n)$. This
means that the covering of the set $\cup_{k=n}^{\infty}I_k$ by the interval $[0,b_n]$ is optimal in the sense
of definition of $M_h.$
\end{lemma}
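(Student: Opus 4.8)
The plan is to treat the two bounds separately. The upper bound is immediate: every $I_k$ with $k\ge n$ is contained in $[0,b_n]$, so the single–interval covering gives $M_h(\bigcup_{k\ge n}I_k)\le h(|[0,b_n]|)=h(b_n)$. For the lower bound set $E=\bigcup_{k\ge n}I_k$ and note $M_h(E)=M_h(\overline E)$ because $\overline E=\{0\}\cup E$ and $M_h(\{0\})=0$. On a covering of the compact set $\overline E$ I would run the usual reductions: by continuity of $h$ and compactness, reduce to a finite covering; by replacing each interval with the convex hull of its intersection with $\overline E$ and merging overlapping intervals (which does not increase $\sum h(|G_i|)$, since the concave $h$ with $h(0^+)=0$ is subadditive and increasing), reduce to finitely many pairwise disjoint closed intervals with endpoints in $\overline E$; since $E$ accumulates at $0$ the leftmost of these contains $0$, so it may be taken to be $[0,d_1]$.

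Next I would pin down the structure of such a reduced covering. Each $I_k$ ($k\ge n$), being connected and contained in $\overline E\subset\bigcup G_i$, lies in exactly one $G_i$; and an open gap of the covering between two consecutive intervals contains no point of $E$, hence is contained in one of the gaps $(b_\ell,a_{\ell-1})$ of $E$, with no $I_k$ lying spatially between the two intervals. Iterating this from the left shows that, unless the covering is a single interval $[0,d_1]\supseteq[0,b_n]$ (whence $h(d_1)\ge h(b_n)$ and we are done), there are indices $n=m_0<m_1<\dots<m_{N-1}$ so that the covering consists of $[0,b_{m_{N-1}}]$ together with the intervals $[a_{m_i-1},b_{m_{i-1}}]$ for $i=1,\dots,N-1$, where $[a_{m_i-1},b_{m_{i-1}}]$ covers exactly $I_{m_{i-1}},\dots,I_{m_i-1}$. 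Consequently
$$\textstyle\sum_i h(|G_i|)=\sum_{i=0}^{N-2}h(b_{m_i}-a_{m_{i+1}-1})+h(b_{m_{N-1}}).$$

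The heart of the proof is the two–term inequality
$$(\star)\qquad h(b_m-a_{m'-1})\ \ge\ h(b_m)-h(b_{m'}),\qquad n\le m<m',\ \ n\ \text{large}.$$
Granting $(\star)$, an induction on $j$ gives $\sum_{i=0}^{j-1}h(b_{m_i}-a_{m_{i+1}-1})\ge h(b_{m_0})-h(b_{m_j})$ for $1\le j\le N-1$; taking $j=N-1$ and adding $h(b_{m_{N-1}})$ yields $\sum_i h(|G_i|)\ge h(b_{m_0})=h(b_n)$, which finishes the lower bound and hence the lemma. To prove $(\star)$, put $x=b_m-a_{m'-1}$, $v=b_{m'}$, so that $b_m=x+g+v$ with $g=a_{m'-1}-b_{m'}=(e-1)e^{-m'}-|I_{m'-1}|$, i.e. $(\star)$ reads $h(x)+h(v)\ge h(b_m)$. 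If $m'\ge m+2$ then $x\ge(1-e^{-1})b_m-|I_{m'-1}|\ge\frac12 b_m$, and concavity together with \eqref{deriv} gives $h(b_m)-h(x)=h(x+a_{m'-1})-h(x)\le a_{m'-1}h'(x)\le a_{m'-1}h(x)h_0(x)/x\le C\,e^{-(m'-m)}h(b_m)/m$, which is $\le h(v)=h(e^{-m'})$ for $n$ large because $h(e^{-m'})\ge 1/m'$ while $e^{-(m'-m)}$ decays geometrically in $m'-m$. If $m'=m+1$ then $x=|I_m|=e^{-mQ_m}$ is tiny, and I would instead bound the right-hand side using concavity at the larger point: $h(b_m)-h(b_{m+1})\le(b_m-b_{m+1})h'(b_{m+1})\le(e-1)h(b_{m+1})/(m+1)$ by \eqref{deriv}, reducing $(\star)$ to $(mQ_m)^{\beta}\le(m+1)^{1+\alpha(e^{-m-1})}/(e-1)$ with $\beta=\alpha(e^{-mQ_m})\le1$; since the hypothesis $Q_m\le\log m$ (resp. $Q_m\le Q$) of Example 4 forces $mQ_m=o(m^{1+\sigma})$ for every $\sigma>0$, this holds for $n$ large in cases $(a)$, $(c\,')$ and in $(b)$ with $\alpha_0>0$.

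The main obstacle is exactly isolating $(\star)$ and recognizing that it is the growth restriction $Q_k<\log k$ (resp. $Q_k\le Q$) that makes it true — intuitively, the $I_k$ must not be so thin that peeling off $I_m$ and covering it on its own is cheaper than the single interval containing it; without that restriction $M_h$ would be strictly smaller than $h(b_n)$. A secondary nuisance is that the crude estimate via \eqref{deriv} degenerates precisely when $\alpha\equiv1$ or when $\alpha_0=0$ in case $(b)$, so in those ranges I would verify $(\star)$ directly from $h=h_0^{\alpha}$, using the monotonicity of $\alpha$ and the condition \eqref{e} on $\varepsilon$; for instance when $\alpha\equiv1$ the inequality $(\star)$ with $m'=m+1$ is just $\tfrac1{mQ_m}+\tfrac1{m+1}\ge\tfrac1m$, equivalent to $Q_m\le m+1$, which holds for $n$ large.
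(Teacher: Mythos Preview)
Your proof is correct and follows essentially the same route as the paper. Both arguments reduce an arbitrary covering to finitely many disjoint closed intervals, each covering a block of consecutive $I_k$'s, with the leftmost of the form $[0,b_N]$; both then isolate the same two-term merging inequality---your $(\star)$ is exactly the paper's \eqref{optim} under the identification $m\leftrightarrow q$, $m'\leftrightarrow N$---and prove it via the derivative bound \eqref{deriv}, splitting into the cases where the right block consists of a single $I_m$ or of at least two. The only cosmetic difference is that you package the iteration as a telescoping sum, while the paper merges the two leftmost blocks repeatedly; and your treatment of the borderline cases $\alpha_0=0$ in $(b)$ and $\alpha\equiv1$ is slightly more tentative than the paper's, though in fact your main estimate already covers $\alpha\equiv1$ (then $(mQ_m)^{\beta}\le mQ_m\le m\log m\ll (m+1)^2/(e-1)$), and the $\alpha_0=0$ case goes through once you use $\alpha(|I_m|)<\alpha(b_{m+1})$ together with $\log(mQ_m)<2\log m$.
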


\begin{proof} Let us fix any open covering of $K,$ choose its finite subcovering $\cup_{i=1}^M G_i$ and enumerate
sets $G_i$ from left to right. We can suppose that $G_1$ covers $\cup_{k=N}^{\infty}I_k$ for some $N\geq n.$
Indeed, if $G_1$ covers as well some part of $I_{N-1},$ then other part of $I_{N-1}$ is covered by $G_2.$
In this case, association of $G_1$ and $G_1$ into one interval will give better covering, since
$h(b)\leq h(x)+h(b-x)$ for $0\leq x \leq b,$ by concavity of $h$. For the same reason, we suppose that each
$G_i$ covers entire number of $I_k.$ After this we reduce each $G_i$ to the minimal closed interval $F_i$
containing the same intervals  $I_k.$ Thus, $F_1=[0,b_N]$ and $F_2=[a_{N-1}, b_q]$ with some $N-1\leq q\leq n.$
Our aim is to show that
\begin{equation}\label{optim}
h(b_q) < h(b_N)+h(b_q-a_{N-1}),
\end{equation}
so replacing $F_1 \cup F_2$ with $[0,b_q]$ is preferable. We use the mean value theorem and the decrease
of $h'.$ Note that $h(b_k)=k^{-\alpha(b_k)}.$\\

Consider first the value $q=N-1.$ We will show $h(b_{N-1}) - h(b_N)< h(|I_{N-1}|).$

In the cases $(a), (b),$ by \eqref{deriv}, $LHS< h'(b_N)e^{-N}(e-1)<N^{-1-\alpha(b_N)} \alpha(b_N) (e-1).$
On the other hand, $h(|I_{N-1}|)=[Q_{N-1}(N-1)]^{-\alpha(|I_{N-1}|)}.$ Here, $\alpha(|I_{N-1}|)<\alpha(b_N),$
so we reduce the desired inequality to $(Q_{N-1}/N)^{\alpha(b_N)}\alpha(b_N) (e-1)<1.$ It is valid, since
for $\alpha_0>0$ the first term on the left goes to zero, whereas for $\alpha_0=0$ in $(b)$ we have
$\alpha(b_N)=\varepsilon(b_N) \to 0$ as $N\to \infty.$

Similarly, in the case $(c\,')$ the inequality
$[Q_{N-1}(N-1)]^{\alpha_0- \varepsilon(|I_{N-1}|)}(e-1)<N^{1+\alpha_0- \varepsilon(b_N)}$ is valid,
as is easy to check.

Suppose now that $q\leq N-2.$ We write \eqref{optim} as  $h(b_q) - h(b_q-a_{N-1})< h(b_N).$

Here, in all cases, by \eqref{deriv}, $LHS< h'(b_q-a_{N-1})\,a_{N-1}<h(b_q) h_0(b_q) \frac{a_{N-1}}{b_q-a_{N-1}},$
where the last fraction does not exceed $\frac{b_{N-1}}{b_q-b_{N-1}}.$ On the other hand,
$ h(b_N)\geq N^{-1}$ as $\alpha(b_N)\leq 1.$ Hence it is enough to show that
$N< (e^{N-q-1}-1)\, q^{1+\alpha(b_q)}.$ We neglect $\alpha(b_q)$ and notice that
$(e^{N-q-1}-1)\, q\geq (e-1) (N-2),$ which completes the proof of \eqref{optim}.

Continuing in this manner, we see that $h(b_n) \leq \sum_{i=1}^M h(|F_i|).$
\end{proof}

\begin{corollary}\label{cor2}
Suppose $b_{n+1}\leq r \leq b_n-b_{n+1}.$ Then $\varphi_{h,K}(r)=h(|I_n|).$
\end{corollary}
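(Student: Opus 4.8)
The plan is to read off the value of $\varphi_{h,K}(r)=\inf_{x\in K}M_h(K\cap B(x,r))$ by observing that, for the stated range of $r$, the infimum is attained at the point $x=0$, and then to invoke Lemma \ref{M} to evaluate $M_h(K\cap B(0,r))$. First I would treat the point $x=0$: since $b_{n+1}\le r$, the ball $B(0,r)=[-r,r]$ contains every interval $I_k$ with $k\ge n+1$ (because $b_k\le b_{n+1}\le r$ for such $k$), while the constraint $r\le b_n-b_{n+1}<b_n=b_n$ together with $a_n=b_n-|I_n|$ shows that $B(0,r)$ meets $I_n$ in at most a subinterval touching $a_n$ but does not reach $b_n$; in fact for $r<a_n$ it misses $I_n$ entirely, and for $a_n\le r\le b_n-b_{n+1}$ it captures the left portion $[a_n,r]$. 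In either case $K\cap B(0,r)$ contains $\bigcup_{k\ge n+1}I_k$ and is contained in $\{0\}\cup\bigcup_{k\ge n+1}I_k\cup[a_n,b_n]$. Monotonicity of $M_h$ and Lemma \ref{M} (applied with index $n+1$) give $M_h(K\cap B(0,r))\le M_h(\{0\}\cup\bigcup_{k\ge n}I_k)=h(b_n)$; but since $|I_n|\le b_n-b_{n+1}$... one must be a little careful here, so the cleaner route is: the interval $[0,b_n]$ covers $\{0\}\cup\bigcup_{k\ge n}I_k\supseteq K\cap B(0,r)$, hence $M_h(K\cap B(0,r))\le h(b_n)=h(|I_n|)$ only if $b_n=|I_n|$, which is false; thus I would instead bound by the covering of $K\cap B(0,r)$ by the single interval $[0,r']$ where $r'=\min(r,b_n)$, yielding $M_h(K\cap B(0,r))\le h(r)$, and then produce a matching lower bound. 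The correct target is therefore to show $M_h(K\cap B(0,r))=h(|I_n|)$, i.e.\ the optimal covering of $\{0\}\cup\bigcup_{k\ge n+1}I_k$ (the part of $K$ seen from $0$ at radius $r$, ignoring the stray piece of $I_n$ when it is met) reorganizes, by the concavity argument of Lemma \ref{M}, into a single interval of length exactly $|I_n|$ reaching from $0$ — no, from the right endpoint of the optimal cover down to $0$ has length $b_{n+1}$, not $|I_n|$.

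Let me restate the plan honestly. The intended identity is $\varphi_{h,K}(r)=h(|I_n|)$, so the worst point $x$ must be one for which the localized content equals $h(|I_n|)$; the natural candidate is the right endpoint $b_n$ of $I_n$ (or any point of $I_n$), since $K\cap B(b_n,r)$ with $b_{n+1}\le r$ contains all of $I_n$ and a concavity argument shows the cheapest cover of $K\cap B(b_n,r)$ is the single interval $I_n$ itself — one checks $r\le b_n-b_{n+1}$ guarantees $B(b_n,r)$ does not reach $a_{n-1}$, so nothing of $I_{n-1}$ is captured, while $K\cap B(b_n,r)\supseteq I_n$ forces the cover to have total $h$-length at least $h(|I_n|)$, and the interval $I_n$ realizes it after collapsing the tail $\bigcup_{k>n}I_k\cap B(b_n,r)$ into the left end by Lemma \ref{M}-type concavity. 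So step one: show $M_h(K\cap B(b_n,r))=h(|I_n|)$, giving $\varphi_{h,K}(r)\le h(|I_n|)$. Step two: show that for \emph{every} $x\in K$ one has $M_h(K\cap B(x,r))\ge h(|I_n|)$. For $x\in I_k$ with $k\le n$, the ball contains $I_n$ once $r\ge b_{n+1}\ge$ the gap structure places $I_n\subset B(x,r)$? — not automatically for $k<n$; rather, for $x\in I_k$ with $k< n$ the ball $B(x,r)$ with $r\le b_n-b_{n+1}<b_{n-1}$ is small, so it sees only nearby intervals, and monotonicity in the \emph{index} ($h(|I_k|)$ decreasing as $k$ grows, since $|I_k|=b_k^{Q_k}\searrow 0$) must be exploited to show the content is at least $h(|I_n|)$; for $x$ in the tail or at $0$, the ball contains the whole sub-family from some index $\ge n+1$ onward together with part of $I_n$, and again Lemma \ref{M} pins the content at $h(b_m)$ for the relevant $m$, which is $\ge h(|I_n|)$ provided $b_m\ge |I_n|$, i.e.\ provided the relevant endpoint exceeds $|I_n|=b_n^{Q_n}$ — true since $b_{n+1}=e^{-n-1}>e^{-nQ_n}=|I_n|$ for $Q_n\ge 2$.

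Concretely the key steps, in order, are: (1) For $x$ with $\mathrm{dist}(x,I_n)$ small enough that $I_n\subset B(x,r)$ (which holds for all $x\in I_n$ and for $x=0$ up to the captured-part subtlety), apply the finite-subcover reduction and the concavity inequality $h(b)\le h(x)+h(b-x)$ exactly as in the proof of Lemma \ref{M} to collapse any optimal cover to a single interval, obtaining $M_h(K\cap B(x,r))=h(|J|)$ where $J$ is the smallest interval containing $K\cap B(x,r)$; evaluate $|J|$ using $b_{n+1}\le r\le b_n-b_{n+1}$ to get exactly $h(|I_n|)$ at the minimizing $x$ and $\ge h(|I_n|)$ elsewhere. (2) For $x\in I_k$ with $k<n$, note $B(x,r)$ with $r\le b_n-b_{n+1}$ is contained in a bounded neighborhood of $I_k$ that still contains at least one interval $I_j$ with $j\le n$ (namely parts near $x$), and since $h(|I_j|)\ge h(|I_n|)$ for $j\le n$ by monotonicity of $k\mapsto|I_k|$ and of $h$, conclude $M_h(K\cap B(x,r))\ge h(|I_n|)$. (3) Combine (1) and (2): $\inf_x M_h(K\cap B(x,r))=h(|I_n|)$, which is the claim. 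The main obstacle is step (1)'s bookkeeping of exactly which intervals $B(x,r)$ captures for each type of center $x$ — the two boundary constraints $r\ge b_{n+1}$ (so $I_{n+1},I_{n+2},\dots$ and a chunk of $I_n$ or all of $I_n$, depending on the center, are inside) and $r\le b_n-b_{n+1}$ (so $I_{n-1}$ is never captured from a center in $I_n$, keeping the optimal collapsed interval at length precisely $|I_n|$) must be combined with the concavity-reduction of Lemma \ref{M}, run essentially verbatim, to pin the value rather than merely bound it; once the reduction is set up the arithmetic is routine.
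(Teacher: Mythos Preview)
Your eventual three-case decomposition---$x\in I_n$ (where the minimum is attained), $x\in K\cap[0,b_{n+1}]$ (handled by Lemma~\ref{M}), and $x\in I_k$ with $k<n$---is exactly the paper's argument, which dispatches the whole thing in three sentences.

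Two places where your execution goes wrong. First, your step~(1) claims that for $x$ near $I_n$ the optimal cover always collapses to the single interval $J=\mathrm{conv}(K\cap B(x,r))$, giving $M_h(K\cap B(x,r))=h(|J|)$. This is false; the Remark immediately after the Corollary says precisely that covering a small number of the $I_k$ by one interval is \emph{not} optimal. You do not need any collapsing: for $x=b_n$ one has $b_n-r\ge b_{n+1}$, so $K\cap B(b_n,r)=I_n$ exactly and $\varphi_{h,K}(b_n,r)=h(|I_n|)$ trivially; for any other $x\in I_n$ the set contains $I_n$, so the content is $\ge h(|I_n|)$. Second, in step~(2) you assert $B(x,r)$ contains a full $I_j$ with $j\le n$. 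This need not hold when $|I_k|>2r$. The correct observation is that $K\cap B(x,r)$ contains the subinterval $I_k\cap B(x,r)$, whose length is at least $\min(r,|I_k|)$; since $r\ge b_{n+1}>e^{-nQ_n}=|I_n|$ and $|I_k|=e^{-kQ_k}>|I_n|$ (as $kQ_k<nQ_n$ for $k<n$ and $Q_k\nearrow$), this length exceeds $|I_n|$, and subadditivity of the concave $h$ gives $M_h\ge h(|I_n|)$.

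For the tail case your reasoning is right: if $x\in K\cap[0,b_{n+1}]$ then $B(x,r)\supset\bigcup_{k\ge n+1}I_k$, Lemma~\ref{M} gives content $h(b_{n+1})$, and $b_{n+1}>|I_n|$ yields $h(b_{n+1})>h(|I_n|)$. Drop the false start with $x=0$ as minimizer and the convex-hull over-claim, and the proof is complete.
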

\begin{proof}
Clearly, $\varphi_{h,K}(x,r)=h(|I_n|)$ for each $x\in I_n.$ If $x\in K\cap [0,b_{n+1}]$ then
$B(x,r)$ covers all intervals $I_k$ with $k\geq n+1.$ By Lemma, $\varphi_{h,K}(x,r)=h(b_{n+1})>h(|I_n|).$
Of course, for $x\in I_k$ with $k<n$ the value $\varphi_{h,K}(x,r)$ also exceed $h(|I_n|)$.
\end{proof}

{\bf Remark.} The covering of two (or small number of) intervals $I_k$ by one interval is not optimal,
since $M_h(I_k\cup I_{k+1})=h(|I_k|)+h(|I_{k+1}|)< h(b_k-a_{k+1})$.
\vspace{0.3cm}

We proceed to characterize $EP$ for given compact sets in terms of lower densities $\phi_h$ for $h= h_0^{\alpha},$ where
\begin{equation}\label{a}
\alpha(t)=\alpha_0\in(0,1]\,\,\mbox{or}\,\,\alpha(t)=\alpha_0\pm \varepsilon_m(t)
\end{equation}
 with $0<\alpha_0<1$ and $\varepsilon_m(t)=(\log_{(m)}1/t)^{-1}$ for $m>2$, so \eqref{e} is valid.

\begin{proposition}\label{phi1}
Let $K$ be from the family of compact sets given in Example 4 and $h$ be as above.
Then $K$ has the extension property if and only if   $\phi_h(K)>0.$
\end{proposition}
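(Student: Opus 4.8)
The strategy is to reduce the statement to an explicit calculation of the lower density $\phi_h(K)$ in each of the two cases for the sequence $(Q_k)$ appearing in Example 4, and to match the answer with the dichotomy "$K$ has EP $\iff$ $(Q_k)$ bounded" given by Theorem 4 in \cite{Mich}. The main tool is Corollary \ref{cor2}: for $b_{n+1}\le r\le b_n-b_{n+1}$ we have $\varphi_{h,K}(r)=h(|I_n|)$, and since $h(t)\succ t$ we also have $M_h(B(x,r))=h(2r)$. Hence, for radii in this range, the local density ratio is controlled by $h(|I_n|)/h(2r)$, and the infimum over $x$ is attained (up to the usual boundary effects) at a point of $I_n$. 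The whole problem therefore becomes: compute $\liminf_{n\to\infty}$ of $h(|I_n|)/h(2r)$ as $r$ ranges over $[b_{n+1},b_n-b_{n+1}]$, and check whether the infimum over this range stays bounded away from $0$.

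The key steps, in order. First I would fix $n$ large and, using Corollary \ref{cor2} together with the Remark (covering a small number of intervals by one interval is never optimal), argue that
$$
\inf_{b_{n+1}\le r\le b_n-b_{n+1}}\frac{\varphi_{h,K}(r)}{h(2r)}
=\inf_{b_{n+1}\le r\le b_n-b_{n+1}}\frac{h(|I_n|)}{h(2r)},
$$
and that the worst (smallest) value of the right side over all admissible $r$ is obtained for the largest admissible $r$, i.e. essentially $r\asymp b_n=e^{-n}$, giving the ratio $h(b_n^{Q_n})/h(b_n)$ up to a factor tending to $1$ (here one uses that $h$ is regularly-varying-like: $h(2t)/h(t)\to 1$, which follows from the form $h=h_0^{\alpha}$ with $\alpha$ as in \eqref{a}). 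Second, I would evaluate this ratio explicitly: since $h(b_k)=k^{-\alpha(b_k)}$ and $|I_n|=b_n^{Q_n}$ so that $h(|I_n|)=(Q_n\,n)^{-\alpha(|I_n|)}$, the ratio $h(|I_n|)/h(b_n)$ behaves like $n^{\alpha(b_n)-\alpha(|I_n|)}\cdot Q_n^{-\alpha(|I_n|)}$. Because $\alpha(t)\to\alpha_0$ and the difference $\alpha(b_n)-\alpha(|I_n|)$ is of order $\varepsilon_m(b_n)-\varepsilon_m(|I_n|)=o(1/\log n)$ (the iterated logs at $b_n$ and at $b_n^{Q_n}$ differ by $o(\log_{(m)}n)$ when $Q_n<\log n$), the factor $n^{\alpha(b_n)-\alpha(|I_n|)}\to 1$. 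Hence the density ratio is, asymptotically, $Q_n^{-\alpha(|I_n|)}\sim Q_n^{-\alpha_0}$.

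Third, I would conclude. If $(Q_k)$ is bounded by $Q$, then $Q_n^{-\alpha_0}\ge Q^{-\alpha_0}>0$ for all $n$, and since the density ratio for radii outside the ranges $[b_{n+1},b_n-b_{n+1}]$ (i.e. $r$ comparable to $b_n-a_{n+1}$, straddling a gap) is handled by the Remark and is even larger, we get $\phi_h(K)\ge \liminf Q_n^{-\alpha_0}>0$; combined with Theorem 4 of \cite{Mich}, this is the "EP $\Rightarrow$ $\phi_h>0$" direction and also its converse restricted to bounded $(Q_k)$. If $Q_k\nearrow\infty$ (with $Q_k<\log k$), then $Q_n^{-\alpha_0}\to 0$, so $\phi_h(K)=0$; and by Theorem 4 of \cite{Mich} such $K$ fails EP. Thus in both cases $\phi_h(K)>0\iff (Q_k)$ bounded $\iff K$ has EP.

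The main obstacle I anticipate is the second step: controlling the factor $n^{\alpha(b_n)-\alpha(|I_n|)}$ and the factor $h(2r)/h(r)$ uniformly, i.e. verifying that the slowly varying corrections coming from $\alpha(t)=\alpha_0\pm\varepsilon_m(t)$ genuinely wash out and do not conspire with the restriction $Q_k<\log k$ to perturb the limit. This is where the hypotheses $m>2$ and $Q_k<\log k$ are used — they guarantee $\log_{(m)}(b_n^{Q_n})\sim\log_{(m)}(b_n)$ and hence $\varepsilon_m(b_n)-\varepsilon_m(|I_n|)=o(1/\log n)$, so that $n$ raised to that power tends to $1$. Everything else is a routine packing/covering argument built on Lemma \ref{M} and its Corollary. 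I would also need to double-check the boundary regime $r\in(b_n-b_{n+1},\,b_n-a_{n+1})$ where $B(x,r)$ can reach across the gap between $I_n$ and $I_{n+1}$: there the Remark shows the optimal covering still separates the two intervals, so $\varphi_{h,K}(r)\ge h(|I_n|)+h(|I_{n+1}|)>h(|I_n|)$ and the density ratio is not smaller than in the regime already treated.
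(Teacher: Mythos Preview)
Your proposal is correct and follows essentially the same approach as the paper's own proof: both use Corollary~\ref{cor2} to identify $\varphi_{h,K}(r)=h(|I_n|)$ on the range $b_{n+1}\le r\le b_n-b_{n+1}$, then reduce $\phi_h(K)$ to the asymptotics of $h(|I_n|)/h(2r)\sim Q_n^{-\alpha_0}$, and finally invoke Theorem~4 of \cite{Mich} for the EP dichotomy. The paper is slightly more economical in the bounded-$(Q_k)$ direction: rather than locating the exact minimizing $r$ in each range and treating the boundary regime separately, it simply uses the monotonicity of $\varphi_{h,K}$ to bound $\varphi_{h,K}(r)\ge\varphi_{h,K}(e^{-k-1})=h(|I_k|)$ for all $e^{-k-1}\le r<e^{-k}$, which covers every $r$ at once; your extra work on the boundary strip $r\in(b_n-b_{n+1},b_n)$ is therefore unnecessary (though not wrong). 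For the unbounded case the paper, like you, picks the test sequence $r_k=b_k-b_{k+1}$ and shows the ratio tends to~$0$.
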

\begin{proof}
Suppose first that $ Q_k\leq Q$ with some $Q,$ so $K$ has $EP$. We aim to show
$\underline{\lim}_{r\to 0} \frac {\varphi_{h,K}(r)}{h(2r)}>0.$ Let $e^{-k-1}\leq r < e^{-k}$
for some $k.$
Then, as $\varphi_{h,K}$ increases, $\varphi_{h,K}(r)\geq \varphi_{h,K}(e^{-k-1}),$
which is $h(|I_k|)=(k\cdot Q_k)^{-\alpha(|I_k|)},$ by Corollary \ref{cor2}.
On the other hand, $h(2r)<h(2e^{-k})=(k-\log 2)^{-\alpha(2e^{-k})}.$
Therefore,
$$\varphi_{h,K}(r)/h(2r)>Q_k^{-\alpha(|I_k|)}\,\,k^{\alpha(2e^{-k})-\alpha(|I_k|)}\,\,
(1-\log 2/k)^{-\alpha(2e^{-k})}.$$
 The first term on the right converges to  $Q^{-\alpha_0}$ as $k\to \infty$.
 The second and the third terms converge to 1.
Hence, $\phi_h(K) \geq Q^{-\alpha_0}.$ Besides, this value is achieved in the case
$Q_k=Q$ by the sequence $r_k=b_k-b_{k+1}.$ Thus, $\phi_h(K)>0.$ In addition,
for given $\sigma>0$ we have a compact sets $K$ with $EP$ such that $0<\phi_h(K) < \sigma.$

Similar arguments apply to the case $Q_k\nearrow \infty,$ when $K$ does not have $EP.$  Here,
$\phi_h(K)\leq \lim_k \varphi_{h,K}(r_k)/h(2r_k)$ for $r_k$ as above.
By Corollary \ref{cor2}, $\varphi_{h,K}(r_k)=h(|I_k|).$ Also, $h(2r_k)>h(e^{-k}).$
Hence, $\varphi_{h,K}(r_k)/h(2r_k)< Q_k^{-\alpha_0/2}\,k^{\alpha(e^{-k})-\alpha(|I_k|)},$
which converges to 0 as $k$ increases.
\end{proof}

{\bf Remark.} For this family of sets, the extension property can be characterized as well in
terms of the Lebesgue linear measure $\lambda.$ Let $\lambda(r):=\inf_{x\in K} \lambda(K\cap [x-r,x+r]).$
Then $K$ has the extension property if and only if
$\liminf_{r\to 0} \lambda(r) \cdot r^{-Q}>0$ for some $Q$.\\

Nevertheless, at least for dimension functions $h=h_0^{\alpha}$ with $\alpha$ as in \eqref{a}, there is no general characterization of $EP$ in terms of lower densities $\phi_h.$ In view of Example 3 and the discussion in the beginning of the section, the value $\alpha_0=1$ can be omitted from consideration

We treat now regular sets $K(\gamma)$ with $\delta_k=\exp(-b^k).$ Here, $B_k=2^{-1} (b/2)^k$.
By Theorem \ref{sg}, $K(\gamma)$ has  $EP$ if $b=2$ and does not have it for $b > 2.$

\begin{lemma} \label{h}
For each constants $C\geq 1$ and $h$, as above, there is $b>2$ such that
$ h(C\delta_k)< 2\,h(\delta_{k+1})$ for large enough $k$. This inequality is valid also for $b=2.$
\end{lemma}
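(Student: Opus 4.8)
The plan is to reduce the claimed inequality $h(C\delta_k) < 2\,h(\delta_{k+1})$ to an inequality on the exponents $\eta = -\log_2 h$, so that the statement becomes a purely numerical comparison involving $b^k$, $b^{k+1}$ and the slowly varying function $\alpha$. Recall that $h = h_0^{\alpha}$, so for $t$ small, $-\log_2 h(t) = \alpha(t)\,\eta_0(t)$ where $\eta_0(t) = (\log\log 1/t)/\log 2$. Hence the desired inequality $h(C\delta_k) < 2 h(\delta_{k+1})$ is equivalent to
\begin{equation*}
\alpha(C\delta_k)\,\eta_0(C\delta_k) > \alpha(\delta_{k+1})\,\eta_0(\delta_{k+1}) - 1.
\end{equation*}
With $\delta_k = \exp(-b^k)$ one computes $\log 1/\delta_k = b^k$, so $\eta_0(\delta_k) = (k\log b)/\log 2$, and since $\log 1/(C\delta_k) = b^k - \log C$, the value $\eta_0(C\delta_k)$ differs from $\eta_0(\delta_k)$ by a quantity that is $O(b^{-k})$, hence negligible. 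Thus, up to vanishing error terms, the inequality to be established reads
\begin{equation*}
\alpha(C\delta_k)\,\frac{k\log b}{\log 2} \;>\; \alpha(\delta_{k+1})\,\frac{(k+1)\log b}{\log 2} - 1,
\end{equation*}
i.e.\ roughly $\frac{\log b}{\log 2}\big[k\,\alpha(C\delta_k) - (k+1)\,\alpha(\delta_{k+1})\big] > -1$.

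First I would dispose of the case $\alpha \equiv \alpha_0$ constant (case $(a)$ with $\alpha_0 \le 1$, and the backbone of cases $(b)$, $(c')$): there the bracket is $\alpha_0\big[k - (k+1)\big] + o(1) = -\alpha_0 + o(1) \ge -1 + o(1)$, so the inequality holds as soon as $\frac{\log b}{\log 2}\,\alpha_0 \le 1$, which we can arrange by choosing $b$ close enough to $2$ (and in particular it holds with $b = 2$ since $\alpha_0 \le 1$, giving the bracket $\to -\alpha_0 \ge -1$). This already settles the last sentence of the lemma concerning $b = 2$. Next I would handle the perturbed cases $\alpha(t) = \alpha_0 \pm \varepsilon_m(t)$: since $\varepsilon_m(t) = (\log_{(m)} 1/t)^{-1} \searrow 0$ and, crucially, $\varepsilon_m$ varies extremely slowly, the difference $\varepsilon_m(C\delta_k) - \varepsilon_m(\delta_{k+1})$ is of smaller order than $1/k$; more precisely $k\big(\varepsilon_m(\delta_k) - \varepsilon_m(\delta_{k+1})\big) \to 0$ because $\varepsilon_m(\delta_k) \sim (\log_{(m-1)} b^k)^{-1} \sim (\log_{(m-2)} k)^{-1}$ up to constants and consecutive terms of such a sequence differ by $o(1/k)$. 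Therefore the bracket is again $-\alpha_0 + o(1)$, uniformly in the sign choice, and the same choice of $b$ near $2$ (resp.\ $b=2$) works because $\alpha_0 < 1$ strictly in cases $(b)$, $(c')$ leaves room to absorb the error.

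I would assemble the argument as follows: (1) write $h = h_0^\alpha$, pass to exponents, and record the identity $-\log_2 h(t) = \alpha(t)\eta_0(t)$; (2) substitute $t = C\delta_k$ and $t = \delta_{k+1}$ with $\delta_k = \exp(-b^k)$, expand $\eta_0$ and isolate the main term $\frac{\log b}{\log 2}\big[k\alpha(C\delta_k) - (k+1)\alpha(\delta_{k+1})\big]$, bounding all remainders by quantities tending to $0$; (3) in each of the cases $(a)$, $(b)$, $(c')$ show the bracket tends to $-\alpha_0 \in [-1,0)$ (using the slow variation of $\varepsilon_m$ for the perturbed cases); (4) conclude that for $b=2$ the inequality $\frac{\log b}{\log 2}(-\alpha_0) = -\alpha_0 > -1$ (or $\ge -1$ with a harmless $o(1)$ margin when $\alpha_0 = 1$) holds for large $k$, and that for $\alpha_0 = 1$, where we need the strict form, we instead pick $b$ slightly larger than $2$ so that $\frac{\log b}{\log 2} < $ any prescribed bound, still giving a set without $EP$. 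The main obstacle I anticipate is the bookkeeping in step (3) for case $(c')$ with $\alpha_0 = 1$: there the bracket limit is exactly $-1$, the boundary value, so one must check that the lower-order corrections — the $O(b^{-k})$ shift in $\eta_0$ from the constant $C$, and the $o(1/k)$ drift of $\varepsilon_m$ — have the right sign or are dominated, which forces the choice $b > 2$ rather than $b = 2$ and explains why the lemma's final sentence (``also for $b = 2$'') is asserted only for the cases where $\alpha_0 < 1$ or $\alpha$ is constant $\le 1$.
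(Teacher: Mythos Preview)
Your approach---passing to $-\log_2 h = \alpha\cdot\eta_0$ and reducing to the comparison $\frac{\log b}{\log 2}\big[k\,\alpha(C\delta_k)-(k+1)\,\alpha(\delta_{k+1})\big] > -1$---is exactly the paper's method, which rewrites the claim as $b^{(k+1)\alpha(\delta_{k+1})} < 2(b^k-\log C)^{\alpha(C\delta_k)}$ and then reduces to $b^{\alpha_0}<2$ after controlling the drift $k\,[\varepsilon_m(C\delta_k)-\varepsilon_m(\delta_{k+1})]\to 0$ in the minus case, just as you outline. For every case with $\alpha_0<1$ your steps (1)--(3) are correct and coincide with the paper's.

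The error is in your step (4) and the final paragraph. You claim that for $\alpha_0=1$ one should ``pick $b$ slightly larger than $2$ so that $\frac{\log b}{\log 2}<$ any prescribed bound'': this has the direction backwards, since $b>2$ gives $\frac{\log b}{\log 2}>1$, making the constraint $\frac{\log b}{\log 2}\,\alpha_0<1$ \emph{harder}, not easier. Indeed, for $\alpha\equiv 1$ the inequality $h(C\delta_k)<2h(\delta_{k+1})$ becomes $b^{k+1}<2(b^k-\log C)$, which fails for every $b\ge 2$ once $C>1$. Fortunately this case is not part of the lemma: the paragraph immediately preceding it states that ``the value $\alpha_0=1$ can be omitted from consideration'', so the ``$h$, as above'' hypothesis already restricts to $\alpha_0<1$ (constant or perturbed). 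Drop the $\alpha_0=1$ discussion entirely and your argument is complete and matches the paper's.
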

\begin{proof}
In all cases we have $h(\delta_k)=b^{-k \cdot \alpha(\delta_k)}$ and the desired inequality has the form
\begin{equation}\label{b}
b^{(k+1)\alpha(\delta_{k+1})}<2\, (b^k-\log C)^{\alpha(C\delta_k)}.
\end{equation}

Suppose $\alpha\equiv \alpha_0.$ Then \eqref{b} is valid as $b^{\alpha_0}<2\,(1-b^{-k}\log C)$
for large $k$ and $b=2+\sigma$ with small enough $\sigma$. All the more it is valid for $b=2.$

The same reasoning applies to the case $\alpha=\alpha_0+ \varepsilon(t)$ with $\varepsilon \nearrow$
 as $\varepsilon(\delta_{k+1})< \varepsilon(C\delta_k).$

In the last case  $\alpha(t)=\alpha_0 - \varepsilon_m(t)$ we use the following simple inequality
$$\log_{(m)}(Cx)-\log_{(m)}(x)< \log C \cdot[\log x\,\log_{(2)}(x) \cdots \log_{(m-1)}(x)]^{-1},$$
which is valid for all $x$ from the domain of definition of $\log_{(m)}$. From this we have
$k\cdot [ \varepsilon(C\delta_k)-\varepsilon(\delta_{k+1})] \to 0$ as $k\to \infty$ and
 \eqref{b} can be treated as in the first case.
\end{proof}

\begin{corollary}\label{cor3} Let $k$ be large enough.Then the covering of each basic interval $I_{j,k}$ of $K(\gamma)$ by one interval is better (in the sense
of definition of $M_h$) than covering by two adjacent subintervals.
\end{corollary}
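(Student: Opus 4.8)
The plan is to reduce the statement to a single numerical inequality of the type already handled in Lemma \ref{h}, applied with the appropriate constant. Fix a large $k$ and a basic interval $I_{j,k}$ of $K(\gamma)$; it contains the two adjacent subintervals $I_{2j-1,k+1}$ and $I_{2j,k+1}$ of the next level. Covering $I_{j,k}$ by one interval costs $h(|I_{j,k}|)$, while covering it by the two adjacent subintervals costs $h(l_{2j-1,k+1})+h(l_{2j,k+1})$. So I must show
$$
h(|I_{j,k}|)< h(l_{2j-1,k+1})+h(l_{2j,k+1}).
$$
By \eqref{delta} we have $|I_{j,k}|=l_{j,k}< C_0\,\delta_k$ and $l_{i,k+1}>\delta_{k+1}$ for the two subintervals. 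Since $h$ is nondecreasing, it suffices to prove $h(C_0\,\delta_k)< 2\,h(\delta_{k+1})$, and this is exactly the conclusion of Lemma \ref{h} (applied with the constant $C=C_0$), valid for large enough $k$ both when $b=2$ and for a suitable $b>2$.

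The one point that needs a word of care is that passing from the actual lengths to the extreme values $C_0\delta_k$ and $\delta_{k+1}$ goes in the correct direction: monotonicity of $h$ gives $h(l_{j,k})\le h(C_0\delta_k)$ on the left and $h(l_{i,k+1})\ge h(\delta_{k+1})$ on the right, so the strict inequality $h(C_0\delta_k)<2h(\delta_{k+1})$ indeed implies the claimed one. No concavity or differentiability of $h$ beyond what Lemma \ref{h} already uses is required here, since all three forms of $\alpha$ in \eqref{a} are covered by that lemma. The main (and essentially only) obstacle is bookkeeping the constant $C_0$ through Lemma \ref{h}; there is no genuine difficulty, which is why the statement is phrased as a corollary.
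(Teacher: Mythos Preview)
Your proof is correct and is essentially identical to the paper's own argument: the paper also bounds $h(l_{j,k})\le h(C_0\delta_k)$ and $h(l_{2j-1,k+1})+h(l_{2j,k+1})\ge 2h(\delta_{k+1})$ via \eqref{delta} and monotonicity of $h$, and then invokes Lemma~\ref{h} with $C=C_0$ to conclude. Your write-up is simply a more explicit version of the one-line chain of inequalities the paper gives.
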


Indeed, by \eqref{delta}, $h(l_{j,k})< h(C_0\delta_k)<  2\,h(\delta_{k+1})< h(l_{2j-1,k+1})+ h(l_{2j,k+1}).$\\

{\bf Remark.} It is essential that coverings of a whole basic interval are considered. For example,
for the set $I_{1,k}\cup I_{3,k+1}$ we have $h(l_{1,k})+ h(l_{3,k+1})<h(b_{3,k+1}),$ which corresponds
to the covering of the set by one interval.

\begin{proposition}\label{phi2}
Let $h=h_0^{\alpha}$ with $\alpha$ as in \eqref{a} and $K(\gamma)$ be defined by  $\delta_k=\exp(-b^k).$
Then $\phi_h(K(\gamma))=b^{-\alpha_0}$. 
\end{proposition}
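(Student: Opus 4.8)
The plan is to compute $\phi_h(K(\gamma))=\liminf_{r\to 0}\inf_{x\in K(\gamma)}\frac{M_h(K(\gamma)\cap B(x,r))}{h(2r)}$ by analyzing, for each scale $r$, which point $x$ minimizes the Hausdorff content of the local piece of $K(\gamma)$ and what the optimal covering of that piece looks like. First I would use Corollary~\ref{cor3}: for large $k$, covering a whole basic interval $I_{j,k}$ by a single interval beats covering it by its two adjacent children. Iterating this downward, the optimal covering of $K(\gamma)\cap I_{j,k}$ (for large $k$) is the single interval $I_{j,k}$ itself, so $M_h(K(\gamma)\cap I_{j,k})=h(l_{j,k})$, and by \eqref{delta} this is sandwiched as $h(\delta_k)\le h(l_{j,k})\le h(C_0\delta_k)$. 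Since $h(\delta_k)=b^{-k\alpha(\delta_k)}$ and $\alpha(\delta_k)\to\alpha_0$, one gets $M_h(K(\gamma)\cap I_{j,k})=b^{-k\alpha_0(1+o(1))}$ uniformly in $j$.

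Next I would fix a small $r$ and a point $x\in K(\gamma)$, and locate the level $k=k(x,r)$ for which $B(x,r)$ sees essentially one $k$-th level basic interval but spreads across several at level $k+1$ — more precisely, choose $k$ so that $\delta_{k+1}\lesssim r\lesssim \delta_k$. The denominator is $h(2r)=(\log\frac1{2r})^{-\alpha(2r)}$, which for $r\approx\delta_k$ is comparable to $h(\delta_k)=b^{-k\alpha_0(1+o(1))}$, and for $r$ anywhere in the range $[\delta_{k+1},\delta_k]$ stays within the same $b^{-k\alpha_0(1+o(1))}$ band because $\delta_{k+1}=\delta_k^{\,?}$ — actually $\log\frac1{\delta_{k+1}}=b\log\frac1{\delta_k}$, so $h(\delta_{k+1})=b^{-(k+1)\alpha_0(1+o(1))}=b^{-\alpha_0}h(\delta_k)(1+o(1))$. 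So as $r$ ranges over $[\delta_{k+1},\delta_k]$, $h(2r)$ moves by at most a factor $b^{\alpha_0}(1+o(1))$. The numerator $\varphi_{h,K(\gamma)}(r)=\inf_x M_h(K(\gamma)\cap B(x,r))$: for a point $x$ deep inside a level-$(k+1)$ interval, $B(x,r)$ captures just one such interval, giving numerator $h(l_{\cdot,k+1})\asymp h(\delta_{k+1})=b^{-\alpha_0}h(\delta_k)(1+o(1))$; for $x$ near a gap of level $k$, $B(x,r)$ might capture two level-$(k+1)$ intervals but then — by the covering-optimality lemmas and the Remark — one single interval of length $\approx\delta_k$ covers them, so the numerator is $\approx h(\delta_k)$, which is larger. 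Hence the infimum over $x$ is attained (asymptotically) by the deep-interior points and equals $b^{-\alpha_0}h(\delta_k)(1+o(1))$.

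Combining, for $r$ near $\delta_{k+1}$ the ratio is $\frac{b^{-\alpha_0}h(\delta_k)(1+o(1))}{h(2\delta_{k+1})}=\frac{b^{-\alpha_0}h(\delta_k)(1+o(1))}{b^{-\alpha_0}h(\delta_k)(1+o(1))}\to 1$, whereas for $r$ near $\delta_k$ the ratio is $\frac{b^{-\alpha_0}h(\delta_k)(1+o(1))}{h(2\delta_k)}=\frac{b^{-\alpha_0}h(\delta_k)(1+o(1))}{h(\delta_k)(1+o(1))}\to b^{-\alpha_0}$. Taking the $\liminf$ over all $r\to 0$ picks out the smaller value, and one checks no intermediate scale produces anything below $b^{-\alpha_0}$ (the ratio is monotone-ish in $r$ on each dyadic-in-$\delta$ block because $\varphi_{h,K(\gamma)}$ is nondecreasing while $h(2r)$ is nondecreasing, and the two endpoints bracket the block's values). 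Therefore $\phi_h(K(\gamma))=b^{-\alpha_0}$.

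The main obstacle I anticipate is the careful bookkeeping in the middle step: showing rigorously that the infimum over $x$ of $M_h(K(\gamma)\cap B(x,r))$ is realized (up to $1+o(1)$) by points in the interior of the smallest basic interval fitting in $B(x,r)$, rather than by some cleverly placed $x$ straddling several gaps at different levels. This requires invoking Corollary~\ref{cor3} together with the Remark (that covering a \emph{partial} union across a gap can be cheaper with one interval, but covering a \emph{whole} basic interval is cheapest with one interval) to rule out that a ball straddling a level-$k$ gap could have smaller content than $h(\delta_{k+1})$; the uniform-in-$j$ control from \eqref{delta} and the asymptotic $\alpha(\delta_k)\to\alpha_0$ are what make the $o(1)$'s uniform. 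The rest is the elementary computation $\log\frac1{\delta_{k+1}}=b\log\frac1{\delta_k}$ feeding into $h=h_0^\alpha$.
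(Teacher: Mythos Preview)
Your approach is essentially the same as the paper's: both compute $M_h(K(\gamma)\cap I_{j,k})=h(l_{j,k})$ via Corollary~\ref{cor3} (the paper phrases it as ``arguing as in Lemma~\ref{M}, by Lemma~\ref{h}''), then sandwich $\varphi_{h,K}(r)/h(2r)$ using $\delta_k=\delta_{k-1}^b$ and $\alpha\to\alpha_0$, obtaining the lower bound $b^{-\alpha_0}$ and exhibiting a sequence realizing it. The paper's execution is a bit cleaner in one respect that directly dissolves your ``main obstacle'': it fixes the range $C_0\delta_k\le r\le \tfrac{7}{8}\delta_{k-1}$, for which $l_{j,k}\le r<h_{i,k-1}$ forces $K\cap B(x,r)=K\cap I_{j,k}$ for \emph{every} $x\in K$ (not just ``deep interior'' points), so no analysis of points straddling gaps is needed, and the infimum over $x$ reduces to $\min_j h(l_{j,k})\in[h(\delta_k),h(C_0\delta_k)]$; it then extends to the full range $[C_0\delta_k,C_0\delta_{k-1}]$ by monotonicity of $\varphi_{h,K}$, obtaining the two-sided bound without your ``monotone-ish'' step, and shows achievement along $r_k=\tfrac{7}{8}\delta_{k-1}$.
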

\begin{proof}
For brevity, we denote here $K(\gamma)$ by $K.$ Fix $x \in K.$ Let $x\in I_{j,k}\subset I_{i,k-1} $ and $C_0\,\delta_k\leq r \leq 7/8 \cdot \delta_{k-1}.$ Then, by \eqref{delta}, $l_{j,k}\leq r <h_{i,k-1}$ and
$K \cap [x-r, x+r]=K \cap I_{j,k}.$ Arguing as in Lemma \ref{M}, by Lemma \ref{h}, we get
$\varphi_{h,K}(x,r)=h(l_{j,k}).$ Therefore, by monotonicity, $h(\delta_k)<\varphi_{h,K}(x,r)<h(C_0 \delta_k)$
for each $x\in K$.

We proceed to estimate $\phi_h(K)$ from both sides. Suppose that $C_0\,\delta_k\leq r \leq C_0 \cdot \delta_{k-1}$
for some $k.$
Then $h(\delta_k)<\varphi_{h,K}(r)<h(C_0 \delta_{k-1})$ and
$$ \frac{h(\delta_k)}{h(2C_0\delta_{k-1})} < \frac{\varphi_{h,K}(r)}{h(2r)}< \frac{h(C_0\delta_{k-1})}{h(2C_0\delta_k)}.$$
Here, $\delta_k=\delta_{k-1}^b.$ Analysis similar to that in the proof of Lemma \ref{h} shows that
the first fraction above has the limit $b^{-\alpha_0}$, whereas the last fraction tends to
$b^{\alpha_0}$ as $k \to \infty.$ Moreover, the value $b^{-\alpha_0}$ can be achieved as
$\lim_k \varphi_{h,K}(r_k)/ h(2r_k)$ for $r_k=7/8\cdot \delta_{k-1}.$
\end{proof}

Comparison of Propositions \ref{phi1} and \ref{phi2} shows that, for given dimension functions,
lower densities of Hausdorff contents cannot be used in general to characterize the extension property.

\section{Extension Property and growth of Markov's factors}

Let ${\mathcal P}_n$ denote the set of all holomorphic polynomials of degree at most $n.$
For any infinite compact set $K \subset {\Bbb C}$ we consider the sequence of {\it Markov's factors}
$$M_n(K)=\inf \{ M: \,|P'|_{0,K} \leq M \,|P|_{0,K}, \,\,P\in {\mathcal P}_n \}$$
for  $n\in {\Bbb N}.$
We see that $M_n(K)$ is the norm of the operator of differentiation in the space
$({\mathcal P}_n, |\cdot|_{0,K}).$ We say that a set $K$ is {\it Markov} if the sequence $(M_n(K))$
is of polynomial growth. This class of sets is of interest to us, since, by  W.Ple\'sniak \cite{ples},
any Markov set has $EP.$ On the other hand, there exist non-Markov compact sets with $EP$ (\cite{G96}, \cite{AG}).
We guess that there is some extremal growth rate $(m_n)_{n=1}^{\infty}$ with the property:
if, for some compact set $K$, $M_n(K)/m_n \to \infty$ as $n\to \infty$ then $K$ does not have $EP.$
The next proposition asserts that here, as above, there is a zone of uncertainty, in which
growth rate of  Markov's factors is not related with $EP.$ In this sense, it is an analog of
Proposition 8.3.

\begin{proposition}\label{mark}
There are two sets $K_1$ with $EP$ and $K_2$ without it, such that $M_n(K_1)$ grows essentially faster than $M_n(K_2)$ as $n\to \infty$.
\end{proposition}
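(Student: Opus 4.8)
The plan is to build both sets from the weakly equilibrium family $K(\gamma)$, so that the characterization of $EP$ from Theorem~5.4 (condition \eqref{Y}) is available, and to control $M_n(K(\gamma))$ by relating it to the parameters $\delta_k$. First I would recall (or establish) a two-sided estimate for Markov's factors of $K(\gamma)$. Since $K(\gamma)=\cap_s E_s$ is a nested intersection of polynomial inverse images $E_s=P_{2^s}^{-1}([-r_s,0])$, and the lengths of the $s$-th level basic intervals are comparable to $\delta_s$ by \eqref{delta}, the differentiation operator on ${\mathcal P}_n$ restricted to $K(\gamma)$ behaves, for $n$ in the range $2^s\le n<2^{s+1}$, like differentiation on a union of $2^s$ intervals of length $\sim\delta_s$ separated by gaps $\sim\delta_{s-1}$. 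This gives a bound of the shape $c\,\delta_s^{-1}\le M_n(K(\gamma))\le C\,n^2\,\delta_s^{-1}$ for $2^s\le n<2^{s+1}$; the lower bound comes from testing on a Chebyshev-type polynomial adapted to one basic interval together with the smallness of $K(\gamma)$ inside it, and the upper bound from a Markov inequality on each basic interval combined with a partition-of-unity/Bernstein argument across the gaps (the gaps being large relative to the intervals by \eqref{hh}). Thus the growth rate of $M_n(K(\gamma))$ is governed, up to polynomial-in-$n$ factors, by $\delta_s^{-1}=\exp(2^{s+1}B_s/\ \cdot)$, i.e.\ by the sequence $(B_k)$.

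Next I would choose the two sequences. For $K_1$, take a regular $\gamma$ of subexponential growth with $B_k\nearrow\infty$ fairly fast—for instance $\delta_k=\exp(-2^{k+1}B_k)$ with $B_k=\exp(k/\log_{(m)}k)$ as in Example~3; by Theorem~\ref{sg} this set has $EP$, yet $M_n(K_1)$ grows like $\exp\!\big(c\,n\,B_{\log_2 n}\big)$, i.e.\ faster than any iterated exponential of a fixed power of $n$. For $K_2$, take the polar set of Example~1 (or a mild variant): $\gamma_k=\exp(-2^kB)$ for $k\ge2$, so $B_k\equiv B$ and $\delta_k=\exp(-2^{k+1}B)$. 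Condition \eqref{Y} fails?—no: in Example~1 \eqref{Y} \emph{is} satisfied, so that set has $EP$; instead I would take $K_2$ from Example~2, where \eqref{Not} holds and $EP$ fails, with $\delta_k$ eventually of the form $(5!/(k+5)!)^2\varepsilon_1\cdots\varepsilon_j$; there $\delta_k^{-1}$ grows only like $\exp\!\big(O(k^2)+2^{k_j}\big)$ on the block $k_j\le k<k_{j+1}$, and by choosing the jump points $k_j$ sparse and the $A_j$ growing slowly one arranges that $M_n(K_2)\le\exp(C\,n^2)$ along the relevant subsequence, in any case far below the growth of $M_n(K_1)$. The comparison "$M_n(K_1)$ grows essentially faster than $M_n(K_2)$" then follows by dividing the two asymptotics and letting $n\to\infty$.

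The main obstacle is the Markov-factor estimate for $K(\gamma)$ itself: one needs a clean two-sided bound $M_n(K(\gamma))\asymp_{\mathrm{poly}(n)}\delta_s^{-1}$ for $2^s\le n<2^{s+1}$. The upper bound is the routine direction—Markov's inequality on each basic interval of length $\ge\delta_s$, times a factor $n^2$, plus a uniformly bounded correction to glue across the gaps using the cutoff functions $u(\cdot,t,E)$ already introduced in Section~4 (their derivative bounds $c_p t^{-p}$ with $t\sim\delta_{s-1}$ are harmless since $\delta_{s-1}\gg\delta_s$). The delicate direction is the lower bound: I would exhibit, for each such $n$, a polynomial of degree $\le n$—concretely a normalized power of $P_{2^s}+r_s/2$, or a Chebyshev polynomial of one basic interval composed with an affine map—whose derivative at a well-chosen point of $K(\gamma)$ is $\gtrsim\delta_s^{-1}$ times its sup-norm on $K(\gamma)$, using that $K(\gamma)$ occupies only a $\delta$-fraction of each basic interval near its endpoints (by \eqref{hh}) so that the sup-norm on $K(\gamma)$ is not much smaller than on the whole interval. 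Once this estimate is in hand the rest is bookkeeping with the explicit sequences above.
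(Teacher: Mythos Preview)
Your overall plan---take both sets from the family $K(\gamma)$, invoke Theorem~5.3 (condition \eqref{Y} versus \eqref{Not}) to decide $EP$, and compare Markov factors through the parameters $\delta_k$---is exactly the paper's route, and your choice of $K_2$ from Example~2 matches the paper. The paper takes the simpler $K_1$ from Example~1 (constant $B_k\equiv B$ with $B>1$), not Example~3; your heavier choice also works but is unnecessary, since already $\delta_k^{(1)}=\exp(-2^{k+1}B)$ decays fast enough to dominate $\delta_k^{(2)}$ from Example~2 with $k_j=j^2$.

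There is, however, a genuine gap in your treatment of the Markov-factor estimate. Your upper bound ``Markov on each basic interval $I_{j,s}$'' controls $|P'|_{I_{j,s}}$ by $|P|_{I_{j,s}}$, not by $|P|_{K(\gamma)}$. Since $K(\gamma)\cap I_{j,s}$ is extremely thin in $I_{j,s}$, the ratio $|P|_{I_{j,s}}/|P|_{K(\gamma)}$ is not bounded: for instance $P=P_{2^{s+1}}$ has $|P|_{E_s}=r_s^2/4$ but $|P|_{K(\gamma)}\le r_{s+1}=\gamma_{s+1}r_s^2$, a ratio of $1/(4\gamma_{s+1})$, which can be arbitrarily large. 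No cutoff or partition-of-unity device fixes this for polynomials. Consequently your claimed bound $M_n(K(\gamma))\le Cn^2\delta_s^{-1}$ for $2^s\le n<2^{s+1}$ is not established (and is in fact stronger than what the paper uses or needs).

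The paper avoids this entirely by quoting the asymptotic $M_{2^k}(K(\gamma))\sim 2/\delta_k$ from \cite{G}, whose proof exploits the polynomial-inverse-image structure (the map $P_{2^k}+r_k/2$ carries $K(\gamma)$ onto an interval of length $r_k$) rather than interval-wise Markov. Monotonicity in $n$ then gives $\delta_k^{-1}<M_n(K(\gamma))<4\,\delta_{k+1}^{-1}$ for $2^k\le n<2^{k+1}$, and the comparison $M_n(K_2)/M_n(K_1)<4\,\delta_k^{(1)}/\delta_{k+1}^{(2)}\to 0$ is a direct calculation from the explicit $\delta$-sequences. You should either cite that result or reproduce its proof via the inverse-image structure; the interval-by-interval approach will not close.
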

\begin{proof}
By Theorem 6 in [we], $M_{2^k}(K(\gamma))\sim 2/\delta_k.$ By monotonicity,
$ \delta_k^{-1}<M_n(K(\gamma))<4\,\delta_{k+1}^{-1}$ for $2^k\leq n <2^{k+1}$ with large enough $k$.
As in Proposition 8.3, we take $K_1$ from Example 1, so $\delta_k^{(1)}=\exp(-2^{k+1}B)$ with $B>1.$
 Also, we use $K_2$ from Example 2 with $A_j=2^{k_j}$. For simplicity, we fix $k_j=j^2$ that satisfies
 \eqref{irr}. Here,
 $\delta_k^{(2)}> k^{-2k}\,\varepsilon_1\varepsilon_2\cdots \varepsilon_j$ for $k_j\leq k < k_{j+1}.$
 We aim to show that $M_n(K_2) /M_n(K_1)\to 0$ as $n\to \infty.$ Let us fix large $n$ with
 $2^k\leq n < 2^{k+1}.$  For this $k$ we fix $j$ with  $k_j\leq k < k_{j+1}.$ Then
 \begin{equation}\label{mm}
M_n(K_2) /M_n(K_1)< 4\,\delta_k^{(1)}/ \delta_{k+1}^{(2)}.
\end{equation}

Suppose first that $k\leq  k_{j+1}-2.$ Then RHS of \eqref{mm} does not exceed
$4\,\exp[-2^{k+1}B + 2(k+1)\log(k+1) + A_j].$ The expression in brackets is smaller than
$2^{k_j}(1-2B)+ k_{j+1}^2,$ which is $(j+1)^4-(2B-1)\,2^{j^2},$ so it tends to $-\infty$ as $j\to \infty.$

If  $k= k_{j+1}-1$ then RHS of \eqref{mm} is smaller than
$4\,\exp[-2^{ k_{j+1}}B + 2 k_{j+1}\log k_{j+1} + A_{j+1}],$ which goes to 0, since $B>1.$
This completes the proof.
 \end{proof}

Existence of a zone of uncertainty (for the extension property) in  the scale of growth rate of  Markov's factors
implicates the problem to find boundaries of this zone.

\end{document}